\documentclass[10pt,reqno]{amsart}

\parindent=15pt
\parskip=3pt

\newenvironment{dedication}
{
   \itshape             
   \centering  
}

\usepackage{amsfonts}
\usepackage{amsmath}
\usepackage[colorlinks]{hyperref}
\usepackage{amssymb}
\usepackage{tikz}
\usepackage{color}
\usepackage[all]{xy}
\usepackage{enumerate}
\usepackage{bm}
\usepackage{bbm}
\usepackage[normalem]{ulem}
\usepackage{mathrsfs}
\usepackage{tikz-cd}
\usepackage{mathtools}

\usepackage[margin = 1.25in, centering]{geometry}

\setlength{\marginparwidth}{1in}

\setcounter{MaxMatrixCols}{10}

%

\allowdisplaybreaks
\numberwithin{equation}{section}
\newtheorem{theorem}{Theorem}[section]
\newtheorem{proposition}[theorem]{Proposition}
\newtheorem{lemma}[theorem]{Lemma}
\newtheorem{corollary}[theorem]{Corollary}

\theoremstyle{definition}
\newtheorem{definition}[theorem]{Definition}

\newtheorem{question}[theorem]{Question}
\theoremstyle{remark}
\newtheorem{remark}[theorem]{Remark}
\newtheorem{example}[theorem]{Example}

\newcommand{\G}{\mathcal{G}}

\newcommand{\A}{\mathcal{A}}
\newcommand{\B}{\mathcal{B}}
\newcommand{\C}{\mathcal{C}}

\title[The structure of subalgebras of a full matrix algebra.]{The structure of subalgebras of full matrix algebras\\ over a field satisfying the identity $[x_1, y_1][x_2, y_2]\cdots [x_q, y_q] = 0$}

\date{\today}

\author[P. Matra\'s]{Pawe\l{} Matra\'s}
\address{P. Matra\'s: Faculty of Mathematics and Information Science, Technical University of Warsaw, 00-661 Warsaw, Poland}
\email{p.matras@mini.pw.edu.pl}

\author[L. van Wyk]{Leon van Wyk}
\address{L. van Wyk: Department of Mathematical Sciences, Stellenbosch University, Private Bag X1, Matieland 7602, Stellenbosch, South Africa}
\email{lvw@sun.ac.za}

\author[M. Ziembowski]{Micha\l{} Ziembowski}
\address{M. Ziembowski: Faculty of Mathematics and Information Science, Technical University of Warsaw, 00-661 Warsaw, Poland}
\email{michal.ziembowski@pw.edu.pl}

\begin{document}

\maketitle

\begin{dedication}
Dedicated to Sorin D\u asc\u alescu for his sixtieth birthday
\end{dedication}

\begin{abstract} A subalgebra of the full  matrix algebra M$_n(K), \ K$ a field, satisfying the identity 
$\left[x_1, y_1\right]\left[x_2, y_2\right]\cdots\left[x_q, y_q\right] = 0$ 
is called a D$_q$ subalgebra of M$_n(K)$. In the paper we deal with the structure, conjugation and isomorphism problems of maximal D$_q$ subalgebras of~M$_n(K)$.


We show that a maximal D$_q$ subalgebra $\A$ of M$_n(K)$ is conjugated with a block triangular subalgebra of M$_n(K)$ 
with maximal commutative diagonal blocks.  
By analysis of conjugations, the sizes of the obtained diagonal blocks are uniquely determined. It reduces the problem of conjugation of maximal D$_q$ subalgebras of M$_n(K)$ to the analogous problem in the class of commutative subalgebras of M$_n(K)$. Further examining conjugations, in case $\A$ is contained in the upper triangular matrix algebra~U$_n(K)$, we prove that $\A$ is already in a block triangular form.

We consider the isomorphism problem in a certain class of maximal D$_q$ subalgebras of M$_n(K)$ which contain all D$_q$ subalgebras of M$_n(K)$ with maximum dimension.
In case $K$ is algebraically closed, we invoke Jacobson's characterization of maximal commutative subalgebras of 
M$_n(K)$ with maximum ($K$-)dimension to show that isomorphic subalgebras in this class are already conjugated.
To illustrate it, we invoke results from \cite{LM1} and find all 
isomorphism (equivalently conjugation) classes 
of~D$_q$~subalgebras of M$_n(K)$ with maximum possible  dimension, in case $K$ is algebraically closed.




\end{abstract}

\tableofcontents

\section{Motivation, background and results on D$_q$ subalgebras of full matrix algebras} \label{Section 1}


Throughout the sequel, all algebras are assumed to be associative unital and over a field $K$.  By a subalgebra of the full $n\times n $ matrix algebra M$_n(K)$ we mean a $K$-subalgebra of M$_n(K)$. 

The main and direct motivation for the work presented here comes from 
\cite{Domokos}, \cite{LM1} and \cite{LM2}. 
In \cite{Domokos}, Domokos deals with the identity 
\begin{equation}\label{Domokosq}
\left[x_1, y_1\right]\left[x_2, y_2\right]\cdots\left[x_q, y_q\right] = 0
\end{equation}
in the context of subalgebras of M$_n(K)$.
 Here, $[x,y]$ denotes the commutator Lie product $xy-yx$ (also called
the Lie bracket in the literature), and $q$ is a positive integer. 

We note that $\mathrm{M}_{n}(K)$ with the commutator Lie
product plays an exceptional role in the theory of finite-dimensional Lie
algebras. The fundamental Ado-Iwasawa theorem (see \cite{Ja}) asserts that
every finite-dimensional Lie $K$-algebra can be embedded into $\mathrm{M}%
_{n}(K)$ for some $n\geq1$.

Finite dimensional basic algebras
over algebraically closed fields play an important role in the representation theory of Artinian algebras (see \cite{Aus}). Such algebras satisfy (\ref{Domokosq}) for some $q$. An Artinian ring $R$ satisfies (\ref{Domokosq}) for some $q$ if and only if $R/{\rm rad}(R)$ is commutative, in which  case the index of nilpotency of rad$(R)$ is an upper
bound for the least such $q$.

The identity in (\ref{Domokosq}) has featured prominently in many other papers. See, for example,  \cite{BeiMi}, 
\cite{Ma1}, \cite{Ma2} and~\cite{MeSzvW}. It was proved in \cite{Ma1} that all the polynomial identities of the upper triangular $q\times q$ matrix algebra U$_q(K)$ over any field $K$ are consequences of only one identity, namely the identity in~(\ref{Domokosq}). For an explicit form of a finite set of generators of an ideal of identities of 
the algebra~U$_q^{\star}(R)$ 
over a commutative integral domain $R$, see \cite{Ma2}. 
Here U$_q^{\star}(R)$ denotes the subalgebra of U$_q(R)$ comprising all the matrices (in U$_q(R)$) with constant main diagonal. 

The $9\times 9$ matrix algebra ${\rm U}_3^{\star}\bigl({\rm U}_3^{\star}(R)\bigr)$ over any commutative ring $R$ was exhibited in \cite{MeSzvW} as an example of an algebra satisfying the polynomial identity
$[[x_1,y_1],[x_2,y_2]]=0$ (Lie solvability index~two), but none of the stronger identities $[x_1,y_1][x_2,y_2]=0$ (the identity in~(\ref{Domokosq}), with $q=2$) and
$[[x,y],z]=0$ (Lie nilpotency index two).  A Cayley-Hamilton trace identity was exhibited in~\cite{MeSzvW} for $2\times2$
matrices with entries in a ring $R$ satisfying $[x_1,y_1][x_2,y_2]=0$ and $\frac{1}{2}\in R$. See also \cite{JSL}.

The Cayley-Hamilton theorem and the corresponding trace identity play a crucial role (see \cite{Dren} and \cite{DrenForma}) in proving classical results about the polynomial and trace identities of $\mathrm{M}_{n}(K)$. In case $\mathrm{char}(K)=0$, Kemer's pioneering
work (see \cite{Kemer}) on the T-ideals of associative algebras revealed the significance of the identities satisfied by the $n\times n$ matrices over the Grassmann (exterior) algebra
%
generated by an infinite sequence of anticommutative indeterminates.

If an algebra satisfies~$(\ref{Domokosq})$, then we say that it is D$_q$, and if a subalgebra of M$_n(K)$ is D$_q$, then we say that it is a D$_q$~subalgebra of M$_n(K)$. 

Considering D$_1$, i.e., when $q=1$, we get exactly commutativity, which in the 
context of subalgebras of M$_n(K)$, features prominently in the cited literature (see, for example, \cite{Jac} and \cite{Sch}). In particular, a classical result by Schur (see \cite{Sch}) states that the maximum $K$-\,dimension of a commutative subalgebra of M$_n(K)$, with $K$ an algebraically closed field, is $\left \lfloor \frac{n^2}{4} \right \rfloor+1$.
Here $\lfloor \; \rfloor$ denotes the integer floor function.
Schur's result was extended to an arbitrary field by Jacobson in \cite{Jac}. We  often  write dimension instead of $K$-\,dimension.

  If we say that an algebra $\A$ is maximal in an algebra $\mathcal{E}$ with respect to some conditions then we think about the inclusion relation. If, in the context of some class of subalgebras of a finite dimensional  algebra $\mathcal{E}$, we say that  $\A$ has maximum dimension, then we mean that $\A$ has maximum possible dimension in the considered class. 

The mentioned maximum dimension $\left \lfloor \frac{n^2}{4} \right \rfloor+1$ of a commutative subalgebra of M$_n(K)$ is obtained by considering the subalgebra
\begin{equation} \label{typical commutative 1}
K I_n + 
\left(
\begin{array}{cc}
0_\ell & {\rm M}_\ell(K) \\
0_\ell & 0_\ell
\end{array}
\right)
\end{equation}
of M$_n(K)$ if $n$ is even (with $n=2\ell$, for some integer $\ell$), and by considering the subalgebra 
\begin{equation} \label{typical commutative 2}
K I_n + 
\left(
\begin{array}{cc}
0_\ell                & {\rm M}_{\ell \times (\ell+1)}(K) \\
0_{(\ell+1) \times \ell} & 0_{\ell+1}
\end{array}
\right) 
\end{equation}
of M$_n(K)$ if $n$ is odd (with $n=2\ell+1$).
Here, for example, $0_\ell$ and  $0_{(\ell+1) \times \ell}$ denote the $\ell \times \ell$ and $(\ell+1) \times \ell$ zero matrices, respectively.

\medskip

Henceforth, when we consider a D$_q$ subalgebra $\A$ of M$_n(K)$, then we always assume that $q\ge 2$ and that $\A$ is not D$_{q-1}$ (and hence not D$_1$). 
 
 \medskip

The main result in \cite{Domokos} is the following:
\begin{theorem} (\ \negthinspace\cite[Theorem 1]{Domokos}) \label{Domokos}
Let $K$ be a field, and $\mathcal{A}$ a finite dimensional  $K$-algebra satisfying~$(\ref{Domokosq})$.
If $M$ is a finitely generated faithful module over $\mathcal{A}$,
then 
\begin{equation}\label{Domokoseq1}
{\rm dim}_KM\geq \sqrt{\frac{{\rm dim}_K\mathcal{A}-q}{\frac{1}{2} - \frac{1}{4q}}}.
\end{equation}
\end{theorem}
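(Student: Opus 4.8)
The plan is to recast the statement as an upper bound on the dimension of a $D_q$ subalgebra of a full matrix algebra and then to prove that bound by triangularising $\A$ along a short flag. Since $M$ is faithful, the structure homomorphism $\A\to\End_K(M)$ is injective, so $\A$ is isomorphic to a $D_q$ subalgebra of $\End_K(M)\cong \mathrm M_m(K)$, where $m=\dim_K M$. Unwinding \eqref{Domokoseq1} (and using $\tfrac12-\tfrac{1}{4q}>0$), it suffices to prove that every $D_q$ subalgebra $\A\subseteq\mathrm M_m(K)$ satisfies
\[
\dim_K\A\ \le\ \Bigl(\tfrac12-\tfrac{1}{4q}\Bigr)m^2+q .
\]

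The key step, and the only place where the hypothesis is genuinely used, is to show that the two-sided ideal $J$ generated by the commutators is nilpotent of index at most $q$. Write $C=[\A,\A]$ for the $K$-span of all commutators. The identity
\[
[x,y]\,a=[x,ya]-y\,[x,a]\qquad(x,y,a\in\A)
\]
holds in every associative algebra, and since $1\in\A$ it gives $CA\subseteq C+AC=AC$, that is $CA\subseteq AC$; consequently $AC$ is already a two-sided ideal and $J=AC$. Iterating $CA\subseteq AC$ to move every algebra factor to the left yields $(AC)^k\subseteq A\,C^k$ for all $k$, whence $J^q=(AC)^q\subseteq A\,C^q$. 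But $C^q=0$ is precisely the defining identity \eqref{Domokosq} of $D_q$, so $J^q=0$.

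Faithfulness then gives $J^qM=0$, so the submodules $M_i:=J^{\,q-i}M$ form a flag $0=M_0\subseteq M_1\subseteq\cdots\subseteq M_q=M$ of $\A$-submodules with $JM_i\subseteq M_{i-1}$; in particular $[\A,\A]M_i\subseteq M_{i-1}$, so $\A$ acts commutatively on each quotient $M_i/M_{i-1}$. Picking a basis of $M$ adapted to this flag (and discarding trivial steps) exhibits $\A$ inside the block upper triangular matrices with $p\le q$ diagonal blocks of sizes $m_i=\dim_K(M_i/M_{i-1})$, where $\sum_i m_i=m$. Projection onto the block diagonal is an algebra homomorphism whose image is commutative (the diagonal projection of $[\A,\A]$ vanishes), so the Schur--Jacobson bound gives dimension at most $\lfloor m_i^2/4\rfloor+1$ in the $i$-th factor, while its kernel consists of strictly block upper triangular matrices and has dimension at most $\sum_{i<j}m_im_j$. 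Hence
\[
\dim_K\A\ \le\ \sum_{i}\Bigl(\tfrac{m_i^2}{4}+1\Bigr)+\sum_{i<j}m_im_j .
\]

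Finally I would optimise the right-hand side. Using $\sum_{i<j}m_im_j=\tfrac12\bigl(m^2-\sum_i m_i^2\bigr)$, it equals $\tfrac{m^2}{2}+p-\tfrac14\sum_i m_i^2$, and Cauchy--Schwarz gives $\sum_i m_i^2\ge m^2/p$, so the bound is at most $(\tfrac12-\tfrac{1}{4p})m^2+p$. This expression is increasing in $p$, hence maximal over $p\le q$ at $p=q$, giving $\dim_K\A\le(\tfrac12-\tfrac{1}{4q})m^2+q$; rearranging yields \eqref{Domokoseq1}. The genuinely delicate point is the nilpotency claim $J^q=0$: the $D_q$ identity only constrains products of commutators with no factors between them, and the force of the argument is that the universal identity above absorbs the interspersed algebra elements while keeping the number of commutators fixed.
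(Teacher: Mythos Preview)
Your proof is correct and follows essentially the same route the paper attributes to Domokos: it recovers the inequality \eqref{Domokoseq2} via block triangularisation along the flag $J^{q-i}M$, applies the Schur--Jacobson bound on each diagonal block, and then optimises. Your nilpotency argument for the commutator ideal (using $[x,y]a=[x,ya]-y[x,a]$ to get $CA\subseteq AC$, hence $J=AC$ and $J^q\subseteq AC^q=0$) is exactly Proposition~\ref{proposition identities with nilpotent ideal} of the paper, stated a bit more slickly; the flag construction is Lemma~\ref{lemma block triangulation}. The only cosmetic difference is that Domokos keeps $q$ blocks with positive sizes $n_1,\dots,n_q$, while you discard trivial steps and work with $p\le q$ blocks, then use monotonicity of $(\tfrac12-\tfrac1{4p})m^2+p$ in $p$ to pass to $p=q$; both reach the same endpoint.
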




\bigskip

In the proof of  the above theorem,  
Domokos shows that, for the considered $K$-algebra~$\mathcal{A}$, 
\begin{equation}\label{Domokoseq2}
{\rm dim}_K\mathcal{A} \leq \frac{1}{2}({\rm dim}_KM)^2 + q - \sum_{i=1}^q\left(\frac{n_i^2}{2} - \left\lfloor \frac{n_i^2}{4}\right\rfloor \right) 
\end{equation}
for some  positive integers $n_1, \ldots, n_q$ such that $n_1+\cdots+n_q = {\rm dim}_KM$. 

\medskip

If one takes $M=K^n$, then the right hand side in $(\ref{Domokoseq2})$ takes the form
\begin{equation}\label{Domokoseq4}
\frac{1}{2}\left(n_1 + \cdots + n_q\right)^2 + q - \sum_{i=1}^q\left(\frac{n_i^2}{2} - \left\lfloor \frac{n_i^2}{4}\right\rfloor \right). 
\end{equation}
which equals the  expression in (\ref{Domokoseq3}) below, implying that the inequality in $(\ref{Domokoseq2})$ is sharp. 

In \cite{Domokos}, the $n_i$'s are mentioned as any numbers  which guarantee that 
$$\frac{1}{2}({\rm dim}_KM)^2 + q - \sum_{i=1}^q\left(\frac{n_i^2}{2} - \left\lfloor \frac{n_i^2}{4}\right\rfloor \right)$$ is a maximum. Evidently, such an $q$-tuple $(n_1, n_2,\ldots, n_q)$ exists, but it is not exhibited in~\cite{Domokos}. 
In this regard, we refer the reader to 
\cite{LM1}, where such an $q$-tuple is explicitly described and the maximum is exhibited precisely:

\begin{theorem}\label{precise} (\cite[Theorem 14]{LM1})
   Let $1\leq q\leq n$, and let $n = q\left\lfloor \frac{n}{q} \right\rfloor + r$, $0\leq r< q$ 
   (with~$r$ as in the Division Algorithm). Then the precise sharp upper bound for the dimension of a {\rm D}$_q$ subalgebra of ${\rm M}_n(K)$ is
 \begin{equation}\label{maxdimDq}  
\frac{1}{2}\left(n^2 - \left(q-r\right)\left\lfloor \frac{n}{q}\right\rfloor^2 - r\left( \left\lfloor \frac{n}{q}\right\rfloor + 1\right)^2\right) +q + \left(q-r\right)\left\lfloor\frac{\left\lfloor \frac{n}{q}\right\rfloor^2}{4}\right\rfloor + r\left\lfloor\frac{\left( \left\lfloor \frac{n}{q}\right\rfloor + 1\right)^2}{4}\right\rfloor,
\end{equation}
which can be obtained by choosing $q-r$ commutative subalgebras of ${\rm M}_{\left\lfloor \frac{n}{q}\right\rfloor}(K)$ of dimension $\left\lfloor\frac{\left\lfloor \frac{n}{q}\right\rfloor^2}{4}\right\rfloor + 1$ and $r$ commutative subalgebras of ${\rm M}_{\left\lfloor \frac{n}{q}\right\rfloor+1}(K)$ of dimension
$\left\lfloor\frac{\left(\left\lfloor \frac{n}{q}\right\rfloor+ 1\right)^2}{4}\right\rfloor + 1$ on the diagonal blocks for the algebra presented in (\ref{3type}) 
(see also \cite[page 157]{Domokos}).
   \end{theorem}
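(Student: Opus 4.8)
The plan is to combine Domokos's dimension inequality (\ref{Domokoseq2}) with a combinatorial optimization and a matching explicit construction. First I would specialize (\ref{Domokoseq2}) to the faithful finitely generated module $M = K^n$, for which $\dim_K M = n$; every D$_q$ subalgebra $\A$ of M$_n(K)$ acts faithfully on $K^n$, so the inequality applies and produces positive integers $n_1,\dots,n_q$ with $n_1+\cdots+n_q=n$ and
\[
\dim_K \A \;\le\; \tfrac12 n^2 + q - \sum_{i=1}^q \Bigl(\tfrac{n_i^2}{2} - \bigl\lfloor \tfrac{n_i^2}{4}\bigr\rfloor\Bigr).
\]
Writing $f(m)=\tfrac{m^2}{2}-\lfloor m^2/4\rfloor$ and bounding $\sum_i f(n_i)$ below by its minimum over all compositions of $n$ into $q$ positive parts, this reduces the upper bound to the purely combinatorial problem of evaluating $\min\sum_{i=1}^q f(n_i)$.

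The core step, and the main obstacle, is the optimization lemma: $\sum_{i=1}^q f(n_i)$ is minimized by the balanced composition consisting of $q-r$ parts equal to $a:=\lfloor n/q\rfloor$ and $r$ parts equal to $a+1$ (the hypothesis $q\le n$ guarantees $a\ge 1$). The point is that $f(m)=\tfrac14 m^2$ for even $m$ and $f(m)=\tfrac14(m^2+1)$ for odd $m$, so $\sum_i f(n_i)=\tfrac14\sum_i n_i^2+\tfrac14\,|\{i:n_i\text{ odd}\}|$; the extra parity term is exactly what prevents one from simply invoking the classical fact that the sum of squares is minimized by equalizing the parts. I would handle it with a transfer argument: whenever two parts satisfy $n_i=b\le c-2=n_j-2$, replacing them by $b+1$ and $c-1$ decreases $\sum_i n_i^2$ by $2(c-b-1)\ge 2$ while changing the parity count by $+2$, $0$, or $-2$ according as $b,c$ are both even, of opposite parity, or both odd; a short case check then shows the net change in $\sum_i f(n_i)$ is $\le 0$ in each case. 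Since $\sum_i n_i^2$ strictly decreases at every step the process terminates, necessarily at the balanced composition (the only one all of whose parts lie in $\{a,a+1\}$), and $\sum_i f(n_i)$ never increased; hence the balanced composition is a global minimizer. Substituting it, the direct computation
\[
\tfrac12 n^2 + q - (q-r)f(a) - r f(a+1) = \tfrac12\bigl(n^2-(q-r)a^2-r(a+1)^2\bigr)+q+(q-r)\bigl\lfloor\tfrac{a^2}{4}\bigr\rfloor+r\bigl\lfloor\tfrac{(a+1)^2}{4}\bigr\rfloor
\]
reproduces the expression in (\ref{maxdimDq}), establishing it as an upper bound for $\dim_K\A$.

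It remains to prove sharpness using the construction in (\ref{3type}). I would take the block upper triangular subalgebra of M$_n(K)$ whose $q$ diagonal blocks have sizes $a$ (with multiplicity $q-r$) and $a+1$ (with multiplicity $r$), with each diagonal block a maximal commutative subalgebra of the corresponding matrix algebra --- of dimension $\lfloor a^2/4\rfloor+1$, respectively $\lfloor (a+1)^2/4\rfloor+1$, by the Schur--Jacobson theorem --- and with every strictly-above-diagonal block taken to be full. Because the diagonal blocks commute, for all $x,y$ the commutator $[x,y]$ is strictly block upper triangular, so a product of $q$ commutators lies strictly above the block diagonal of a $q$-block matrix and hence vanishes; thus the algebra is D$_q$. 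Its dimension is $\sum_{i<j}n_in_j+\sum_i(\lfloor n_i^2/4\rfloor+1)=\tfrac12\bigl(n^2-\sum_i n_i^2\bigr)+\sum_i\lfloor n_i^2/4\rfloor+q$, which for the balanced composition is exactly $\tfrac12 n^2+q-\sum_i f(n_i)$ and therefore equals (\ref{maxdimDq}). This meets the upper bound, so the bound is sharp and is attained by the stated construction.
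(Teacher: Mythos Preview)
Your argument is correct and follows exactly the route the paper itself sketches: invoke Domokos's inequality (\ref{Domokoseq2}) with $M=K^n$, minimize $\sum_i f(n_i)$ over compositions of $n$ into $q$ positive parts via the transfer argument, and match the bound with the block-triangular construction (\ref{3type}). The paper does not supply its own proof of this theorem---it is quoted from \cite{LM1}---so there is no in-paper proof to compare against; your treatment is precisely the combination of \cite{Domokos} and \cite{LM1} that the surrounding discussion describes.
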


\medskip

In this vein we also draw the reader's attention to \cite{JJLM}, where the maximum dimension of a Lie nilpotent subalgebra of M$_n(K)$ of index $m$ is obtained.

In general, if $\mathcal{A}$ is a subalgebra of M$_n(K)$ and  every matrix $A \in \mathcal{A}$ is seen in the block triangular form
$$A=
\left(
\begin{array}{ccccc}
A_{11} & A_{12}  & \ldots      & A_{1q} \\ 
       & A_{22}  & \ldots      & A_{2q}  \\ 
       &         & \ddots      & \vdots   \\ 
       &         &             &   A_{qq}            
\end{array}
\right),$$
where $A_{ij} \in {\rm M}_{n_i \times n_j}(K)$ for all $i\leq j$, then considering 
the set
\begin{equation} \label{block}
\overline{\mathcal{A}}_{ii} = \left \{ A_{ii} \in {\rm M}_{n_i}(K) :      
\left(
\begin{array}{ccccc}
A_{11} & \ldots  & A_{1i} & \ldots   & A_{1q} \\ 
  &      \ddots  & \vdots  & \ddots  & \vdots  \\ 
  &              & A_{ii}  & \ldots  & A_{iq}   \\ 
  &              &         & \ddots  & \vdots \\ 
  &              &         &         & A_{qq}            
\end{array}
\right) \in \mathcal{A} 
\right \},
\end{equation} it is important to note that 
\begin{equation} \label{blocks2}
\left(
\begin{array}{ccccccc}
0_{n_1}   &\cdots  & 0_{n_1 \times n_{i-1}} & 0_{n_1 \times n_i}      &  0_{n_1 \times n_{i+1}}     & \cdots & 0_{n_1 \times n_q} \\
          & \ddots & \vdots                 & \vdots                  & \vdots                      & \ddots & \vdots            \\
          &        & 0_{n_{i-1}}            & 0_{n_{i-1} \times n_i}  &  0_{n_{i-1} \times n_{i+1}} & \cdots & 0_{n_{i-1} \times n_q} \\
          \\
          &        &                        & \overline{\mathcal{A}}_{ii}       & 0_{n_i \times n_{i+1}}      & \cdots  & 0_{n_i \times n_q} \\ 
          \\
          &        &                        &                         & 0_{n_{i+1}}                 & \cdots  & 0_{n_{i+1} \times n_q} \\ 
          &        &                        &                         &                             & \ddots  & \vdots   \\ 
          &        &                        &                         &                             &         & 0_{n_q}            
\end{array}
\right)
\end{equation}
need not be a subset of $\mathcal{A}$.
If the set in (\ref{blocks2}) is indeed contained in $\A$ for every $i, \ i=1,2,\ldots,q$, then we say that the algebras $\overline{\mathcal{A}}_{ii}$ are {\it independent}.

For example, every matrix $A$ in the subalgebra
\[
\left\{\left(
\begin{array}{ccccccccc}
a & b & c & e & f & g & t & u & v \\
0 & a & d & 0 & e & h & 0 & t & w \\
0 & 0 & a & 0 & 0 & e & 0 & 0 & t \\
0 & 0 & 0 & a & b & c & p & q & r \\
0 & 0 & 0 & 0 & a & d & 0 & p & s \\
0 & 0 & 0 & 0 & 0 & a & 0 & 0 & p \\
0 & 0 & 0 & 0 & 0 & 0 & a & b & c \\
0 & 0 & 0 & 0 & 0 & 0 & 0 & a & d \\
0 & 0 & 0 & 0 & 0 & 0 & 0 & 0 & a \\
\end{array}
\right): a,b,c,d,e,f,g,h,p,q,r,s,t,u,v,w\in K\right\}
\]
of U$_9(K)$ in \cite[Corollary 2.2]{MeSzvW} can be written in the block triangular form
\[
A=
\left(
\begin{array}{ccc}
A_{11}       & A_{12}  &  A_{13} \\ 
             & A_{22}  & A_{23}  \\ 
             &         & A_{33}            
\end{array}
\right),
\]
with $A_{ij}\in {\rm M}_3(K)$ for $1 \leq i \leq j \leq q$. In this example the three algebras $\overline{\A}_{11}$, $\overline{\A}_{22}$ and $\overline{\A}_{33}$ are not independent.
\medskip

\medskip
We will consider block triangular subalgebras of M$_n(K)$ where the sizes of the diagonal blocks play an important role:

\begin{definition}\label{of type}
For any positive integers $n_1, \ldots, n_q$, with $q\ge2$, such that $n_1 + \cdots + n_q = n$, consider a  block triangular subalgebra
\begin{equation} \label{block triang}
\mathcal{A}=
\left(
\begin{array}{ccccc}
\mathcal{A}_{11}   & \mathcal{A}_{12}    & \ldots      & \mathcal{A}_{1q} \\ 
                   & \mathcal{A}_{22}    & \ldots      & \mathcal{A}_{2q}  \\ 
                   &                     & \ddots      & \vdots   \\
                   &                     &             &  \mathcal{A}_{qq}            
\end{array}
\right)
\end{equation}
of M$_n(K)$ 
where 
\begin{enumerate}
\item $\mathcal{A}_{ii}$  is a subalgebra of M$_{n_i}(K)$ for every $i, \ i =1,2, \ldots, q$,  
\item $\mathcal{A}_{ij} = {\rm M}_{n_i \times n_j}(K)$ for all $i$ and $j$ such that $1 \leq i < j \leq q$, and
\item all other entries are zero.
\end{enumerate}


We call $\mathcal{A}$ a 
 subalgebra of~{\rm M}$_n(K)$ of {\it type $(n_1, n_2, \ldots, n_q)$}. If $\A_{ii}={\rm M}_{n_i}(K)$ for all $i$, then we call~$\A$ the {\it full subalgebra of M$_n(K)$ of {\it type $(n_1, n_2, \ldots, n_q)$}}.

\end{definition}

\medskip

It is important to note that the notation in (\ref{block triang}) means that 
for every $i$, $ i = 1, 2, \ldots, q$, 


\begin{equation} \label{diagonal block}
\left(
\begin{array}{ccccccc}
0_{n_1}   &\cdots  & 0_{n_1 \times n_{i-1}} & 0_{n_1 \times n_i}      &  0_{n_1 \times n_{i+1}}     & \cdots & 0_{n_1 \times n_q} \\
          & \ddots & \vdots                 & \vdots                  & \vdots                      & \ddots & \vdots            \\
          &        & 0_{n_{i-1}}            & 0_{n_{i-1} \times n_i}  &  0_{n_{i-1} \times n_{i+1}} & \cdots & 0_{n_{i-1} \times n_q} \\
          \\
          &        &                        & \mathcal{A}_{ii}        & 0_{n_i \times n_{i+1}}      & \cdots  & 0_{n_i \times n_q} \\ 
          \\
          &        &                        &                         & 0_{n_{i+1}}                 & \cdots  & 0_{n_{i+1} \times n_q} \\ 
          &        &                        &                         &                             & \ddots  & \vdots   \\ 
          &        &                        &                         &                             &         & 0_{n_q}            
\end{array}
\right)\subseteq \mathcal{A},
\end{equation} 

\bigskip

\noindent 
and similarly, for all $i$ and $j$ such that $1 \leq i < j \leq q$, the subset of M$_n(K)$ having $\A_{ij}$ ($={\rm M}_{n_i\times n_j}(K)$) in block $(i,j)$, and zeroes elsewhere, is also contained in $\A$.

\bigskip


\begin{remark} \label{typicaldq}
In the case when all the algebras $\A_{ii}, \ i=1,2,\ldots,q$, on the diagonal blocks of a subalgebra $\A$ of M$_n(K)$ of type $(n_1, n_2, \ldots, n_q)$ are commutative, then for any $X, Y \in \A$, the commutator
$[X,Y]$ is an element of 
$$
\left(
\begin{array}{ccccc}
0_{n_1} & {\rm M}_{n_1\times n_2}(K) & {\rm M}_{n_1\times n_3}(K) & \cdots      & {\rm M}_{n_1\times n_q}(K)\\ 
       \\
        & 0_{n_2}              & {\rm M}_{n_2\times n_3}(K) & \cdots      & {\rm M}_{n_2\times n_q}(K)  \\  \\
        \\
        &                      & \ddots               & \ddots      & \vdots                \\ 
        \\
        &                      &                      & \ddots      & {\rm M}_{n_{q-1}\times n_q}(K)\\  
        \\
        &                      &                      &             & 0_{n_q}           
\end{array}
\right).
$$
It follows that the product of any $q$ such commutators is zero, and so $\A$ is D$_q$. 
\end{remark}

This remark enables us to define three classes of D$_q$ subalgebras $\mathcal{A}$ of M$_n(K)$ of type $(n_1, n_2, \ldots, n_q)$ (for some $q$-tuple $(n_1, n_2, \ldots, n_q)$). We stress that, for each of these classes, all the subalgebras in the diagonal blocks are assumed to be commutative. Keeping this in mind, and using ``max-comm", ``max-dim" and ``db's'' as abbreviations for ``maximal commutative", ``maximum dimensional" and ``diagonal blocks'', respectively, we now state:  

\medskip

\begin{definition} \label{abbreviations}
Let $\A$ be a subalgebra of~{\rm M}$_n(K)$ of type $(n_1, n_2, \ldots, n_q)$ (see Definition \ref{of type}), with every subalgebra~$\A_{ii}$ of~M$_{n_i}(K)$ commutative, $\ i =1,2,\ldots,q$. Then $\A$ is called a 
\begin{enumerate}
\item {\rm D}$_q$ subalgebra of~{\rm M}$_n(K)$ of type $(n_1, n_2, \ldots, n_q)$ with max-comm db's if every $\mathcal{A}_{ii}$  is a maximal commutative subalgebra of M$_{n_i}(K)$, \ $i =1,2,\ldots,q$;
\item  {\rm D}$_q$ subalgebra of~{\rm M}$_n(K)$ of type $(n_1, n_2, \ldots, n_q)$ with max-dim db's if every $\mathcal{A}_{ii}$  is a commutative subalgebra of M$_{n_i}(K)$ with maximum dimension (equal to $\left\lfloor \frac{n_i^2}{4}\right\rfloor + 1$), $\ i =1,2,\ldots,q$;
\item max-dim {\rm D}$_q$ subalgebra of~{\rm M}$_n(K)$ of type $(n_1, n_2, \ldots, n_q)$ with max-dim db's if every $\mathcal{A}_{ii}$ is a
commutative subalgebra of M$_{n_i}(K)$ with maximum dimension (equal to $\left\lfloor \frac{n_i^2}{4}\right\rfloor + 1$), $\ i =1,2,\ldots,q$,
and  $\A$ is a D$_q$ subalgebra of M$_n(K)$ of maximum dimension (as in (\ref{maxdimDq})).
\end{enumerate}
\end{definition}

\medskip

Note that, letting $n_1,\ldots, n_q$ as in Theorem \ref{precise}, we obtain an algebra $\A$ as in Definition \ref{abbreviations}(3) by taking a subalgebra

\begin{equation}\label{3type}
\mathcal{A}=
\left(
\begin{array}{ccccc}
\mathcal{A}_{11}   & \mathcal{A}_{12}    & \ldots      & \mathcal{A}_{1q} \\ 
                   & \mathcal{A}_{22}    & \ldots      & \mathcal{A}_{2q}  \\ 
                   &                     & \ddots      & \vdots   \\
                   &                     &             &  \mathcal{A}_{qq}            
\end{array}
\right)
\end{equation}
of M$_n(K)$ constructed in~\cite{Domokos} with
\begin{equation}\label{Domokoseq3}
{\rm dim}_K\mathcal{A} = q+\sum_{i=1}^q\left\lfloor\frac{n_i^2}{4}\right\rfloor + \sum_{1\leq i<j\leq q}n_in_j.
\end{equation}


We draw the reader's attention to the fact that a max-dim D$_2$ subalgebra $\A$ of M$_n(K)$ of some type $(n_1,n_2)$ with max-dim db's such that $\A\subseteq {\rm U}_n(K)$ is called a typical D$_2$ subalgebra of U$_n(K)$ in \cite{LM2}.

\medskip

After preparatory results in Section \ref{section 2}, we prove in Section \ref{section3} and Section \ref{section 4} that, up to conjugation, a subalgebra~$\A$ of M$_n(K)$ is a maximal D$_q$ subalgebra of M$_n(K)$ if and only if $\A$ is a {\rm D}$_q$ subalgebra of~{\rm M}$_n(K)$ of (some) type $(n_1, n_2, \ldots, n_q)$ with max-comm db's (see Theorem \ref{corollary block} and Theorem~\ref{typical are maximal}). 
Continuing our analysis of conjugations, 
we show in Corollary \ref{upper triangular max} that in case $\mathcal{A}$ is a maximal D$_q$ subalgebra of M$_n(K)$ contained in~U$_n(K)$, then  $\mathcal{A}$ is a D$_q$ subalgebra of M$_n(K)$ of some type~$(n_1,n_2,\ldots, n_q)$ with max-comm db's.
By examining in Theorem \ref{max up to conj} when two D$_q$ subalgebras 
of~M$_n(K)$ with max-comm db's are conjugated, we prove that the uniqueness of the mentioned tuple $(n_1, n_2, \ldots, n_q)$ and the pairwise uniqueness (up to conjugation) of the algebras in the corresponding $q$ diagonal blocks are necessary and sufficient conditions. 

Next, we will deal with the isomorphism problem of D$_q$ subalgebras of M$_n(K)$ with max-dim~db's. In Section \ref{isomorphism problem}, after giving an interpretation of these algebras in the light of results from Section~\ref{section3} and~Section~\ref{section 4},  we describe necessary conditions for two D$_q$ subalgebras 
of M$_n(K)$ with max-dim db's to be isomorphic (see Theorem~\ref{theorem typical dk algebras}). Using results from Section \ref{remarks on jacobson}, where we clarify the structure of commutative subalgebras of matrix algebras over  algebraically closed fields, as discussed in \cite{Jac}, we also provide in Theorem \ref{characterisation of typical Dq} sufficient conditions for two D$_q$ subalgebras of M$_n(K)$ with max-dim~db's to be isomorphic, in case the field $K$ is algebraically closed. It turns out that such isomorphic subalgebras are already conjugated. In Section \ref{sectionmaximal}, we illustrate theorems obtained in the previous one section on D$_q$ subalgebras of~M$_n(K)$ with maximum dimension. In order to do it, we recall results in \cite{LM1} about the non-uniqueness of $q$-tuples $(n_1, n_2, n_2, \ldots, n_q)$ which give rise to a max-dim D$_q$ subalgebra of M$_n(K)$.

\section{Block form of subalgebras of M$_n(K)$ with nilpotent ideal} \label{section 2}



In this section we will show (in Lemma \ref{lemma block triangulation}) a  block triangular form of  subalgebras of the matrix algebra M$_n(K)$ containing a nonzero nilpotent ideal (see also \cite[Theorem 1.5.1]{SIM}). We provide a relatively detailed proof of~Lemma~\ref{lemma block triangulation} and illustrate it in Example \ref{example block triangular}.

Lemma~\ref{lemma block triangulation} will be invoked in Section \ref{section3}, where we will prove (in Theorem~\ref{corollary block}) that every maximal~D$_q$ subalgebra of M$_n(K)$ is  conjugated with a {\rm D}$_q$ subalgebra of~{\rm M}$_n(K)$ of type $(n_1, n_2, \ldots, n_q)$ with max-comm db's.

We conclude the section by showing (in Proposition \ref{proposition identities with nilpotent ideal}) that every 
D$_q$ algebra 
contains a nonzero nilpotent ideal in a natural way. 

\begin{lemma} \label{lemma block triangulation}
Let $\mathcal{A}$ be a subalgebra of M$_n(K)$, and let $I$ be a nonzero nilpotent ideal of~$\mathcal{A}$ with nilpotency index $q$, i.e. $I^q = 0$ and $I^{q-1} \neq 0$. 
Then $q \leq n$, and there exist 
natural numbers $n_1, n_2,\ldots, n_q$ such that $\sum_{i=1}^q n_i = n$ and an invertible matrix $X \in {\rm M}_n(K)$ such that $X^{-1}\mathcal{A}X$ is a subalgebra of the full subalgebra of M$_n(K)$ of type $(n_1, n_2, \ldots, n_q)$.
Moreover, the ideal $X^{-1}IX$ is contained in 
$$
\left(
\begin{array}{ccccc}
0_{n_1} & {\rm M}_{n_1\times n_2}(K) & {\rm M}_{n_1\times n_3}(K) & \cdots      & {\rm M}_{n_1\times n_q}(K)\\ 
       \\
        & 0_{n_2}              & {\rm M}_{n_2\times n_3}(K) & \cdots      & {\rm M}_{n_2\times n_q}(K)  \\  \\
        \\
        &                      & \ddots               & \ddots      & \vdots                \\ 
        \\
        &                      &                      & \ddots      & {\rm M}_{n_{q-1}\times n_q}(K)\\  
        \\
        &                      &                      &             & 0_{n_q}           
\end{array}
\right).
$$ 
\end{lemma}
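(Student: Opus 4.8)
The plan is to realize the nilpotency of $I$ geometrically, by producing an $\mathcal{A}$-invariant flag on the column space $K^n$ along which $I$ strictly lowers the filtration degree. For $j\geq 0$ I would set
$$W_j = \{v \in K^n : Mv = 0 \text{ for all } M \in I^j\},$$
with $W_0 = 0$, where $I^j$ denotes the $K$-span of all $j$-fold products of elements of $I$. Since $I^q = 0$ one has $W_q = K^n$, and since $I^{q-1}\neq 0$ there is a $v$ with $I^{q-1}v \neq 0$, so $v\notin W_{q-1}$ and $W_{q-1}\subsetneq W_q$. This yields a chain $0 = W_0 \subseteq W_1 \subseteq \cdots \subseteq W_q = K^n$.

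The two structural facts I would verify first are both immediate consequences of $I$ being a two-sided ideal, which gives $I^j\mathcal{A}\subseteq I^j$ and $I^{j-1}I = I^j$. Writing $M(av) = (Ma)v$ with $Ma \in I^j$ shows that each $W_j$ is $\mathcal{A}$-invariant, i.e. $\mathcal{A}W_j \subseteq W_j$; and writing $M(bv) = (Mb)v$ with $Mb \in I^{j-1}I = I^j$ for $b\in I$ and $M\in I^{j-1}$ shows $IW_j \subseteq W_{j-1}$, so the ideal drops the degree by one.

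The crux — and the only step needing a genuine argument — is that the chain is \emph{strictly} increasing throughout, so that there really are $q$ blocks and $q\leq n$. I would establish the propagation statement: if $W_{j-1} = W_j$ for some $j\leq q-1$, then $W_j = W_{j+1}$. Indeed, for $v\in W_{j+1}$ and any $b\in I$ one has $I^j(bv)\subseteq I^{j+1}v = 0$, so $bv\in W_j = W_{j-1}$; letting $b$ range over $I$ and using $I^{j-1}I = I^j$ then gives $I^j v = 0$, i.e. $v\in W_j$. Hence equalities propagate upward, so the indices at which the inclusion is strict form an initial segment $\{1,\dots,k\}$. Since $W_{q-1}\subsetneq W_q$ forces $q\in\{1,\dots,k\}$, we get $k=q$, so every inclusion in the chain is strict. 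Consequently the integers $n_j := \dim(W_j/W_{j-1})$ satisfy $n_j\geq 1$, sum to $\dim W_q = n$, and give $n\geq q$.

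To finish, I would choose a basis of $K^n$ adapted to the flag (extending a basis of $W_{j-1}$ to one of $W_j$ at each stage) and let $X$ be the matrix whose columns are these basis vectors. Invariance $\mathcal{A}W_j \subseteq W_j$ makes $X^{-1}\mathcal{A}X$ block upper triangular, hence contained in the full subalgebra of type $(n_1,\dots,n_q)$; the sharper relation $IW_j \subseteq W_{j-1}$ forces the diagonal blocks of $X^{-1}IX$ to vanish, placing $X^{-1}IX$ in the strictly block upper triangular set displayed in the statement. The main obstacle is entirely the strictness argument in the third paragraph; the invariance and degree-lowering computations, and the passage to block form, are routine.
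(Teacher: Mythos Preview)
Your proof is correct, but it proceeds via the flag dual to the paper's. The paper uses the \emph{image} filtration
\[
0 \subsetneq I^{q-1}V \subsetneq I^{q-2}V \subsetneq \cdots \subsetneq IV \subsetneq V \qquad (V=K^n),
\]
setting $n_i = \dim_K I^{q-i}V/I^{q-i+1}V$, whereas you use the \emph{annihilator} filtration $W_j = \{v\in V : I^j v = 0\}$. Both flags are $\mathcal{A}$-stable and are shifted one step by $I$, so either produces the required block form. The paper's strictness argument is marginally shorter---a single collapse in the image chain forces $I^{q-1}V=0$ and hence $I^{q-1}=0$---while yours routes through the upward propagation of equalities. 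A more substantive difference is that the two constructions can yield \emph{different} tuples $(n_1,\dots,n_q)$: for instance, if $I$ is spanned by a single nilpotent matrix of Jordan type $(2,1)$ on $K^3$ (so $q=2$), the image flag gives block sizes $(1,2)$ while your annihilator flag gives $(2,1)$. This is harmless for the lemma itself, which only asserts existence, but the paper later (Remark~\ref{numbers ni} and Remark~\ref{maxtuple}) records and relies on the specific image-filtration values of the $n_i$; carrying your construction through the rest of the paper would mean tracking the annihilator dimensions in their place.
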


\begin{proof} Denote the vector space $K^n$ by $V$. For $i= 1,2, \ldots, q$, let $n_i = {\rm dim}_KI^{q-i}V/I^{q-i+1}V$, with $I^{0}V :=V$.

By definition, $n_1 = {\rm dim}_K I^{q-1}V/I^qV= {\rm dim}_K I^{q-1}V$, since $I^q = 0$ (and hence $I^qV = 0$). Next, $I^{q-1}V$ is a $K$-subspace of $I^{q-2}V$, and so $n_1+n_2 = {\rm dim}_K I^{q-1}V+{\rm dim}_K I^{q-2}V/I^{q-1}V = {\rm dim}_K I^{q-2}V$. Inductively, assume that $n_1+n_2+\cdots + n_i = {\rm dim}_K I^{q-i}V$ for some positive integer $i < q$. Since $I^{q-i}V$ is a $K$-subspace of $I^{q-i-1}V$, we  conclude that 
\begin{align*}
n_1+n_2+\cdots+n_{i+1} &= {\rm dim}_K I^{q-i}V + n_{i+1} \\
                       &={\rm dim}_K I^{q-i}V + {\rm dim}_K I^{q-i-1}V/I^{q-i}V = {\rm dim}_K I^{q-i-1}V.
\end{align*}
Hence $n_1+n_2+\cdots + n_i = {\rm dim}_K I^{q-i}V$ for $i = 1, 2, \ldots, q$; in particular, $n_1+n_2+ \cdots+n_q = {\rm dim}_K V = {\rm dim}_K K^n = n.$ 

We have the following sequence of $K$-subspaces of $V$:
\begin{equation} \label{eq2}
0 \subseteq I^{q-1}V \subseteq I^{q-2}V \subseteq \cdots \subseteq IV \subseteq V.
\end{equation}
Using the assumption that $I^q = 0$ and $I^{q-1} \neq 0$, we will show that all the inclusions in~(\ref{eq2}) are proper. Suppose that
 $I^{q-1}V = 0$ or $I^{q-j}V = I^{q-j-1}V$ for some $j$, $1 \leq j \leq q-1$.
 Then 
 $$
 0= I^qV=I^j(I^{q-j}V)=I^j(I^{q-j-1}V)=I^{q-1}V,
 $$
 and so from $V = K^n$ we conclude that $I^{q-1} =0$ (otherwise some matrix in $I^{q-1}$ would have a nonzero entry in some row, which would in turn imply that $I^{q-1}V\ne0$); a contradiction. This establishes the proper inclusions. Thus, 
$$ 
1 \leq {\rm dim}_K I^{q-1}V, \ 2 \leq {\rm dim}_K I^{q-2}V,\,\cdots,\ q-1 \leq {\rm dim}_K IV,\ q \leq {\rm dim}_K V = n.
$$

Now, using the sequence of $K$-subspaces in (\ref{eq2}), we define a basis $B = (v_1, v_2, \ldots ,v_n)$ for the $K$-space $V$ in the following way:

Start with a basis $(v_1, v_2, \ldots, v_{n_1})$ for the $K$-space $I^{q-1}V$ (keeping in mind that, by definition, ${\rm dim}_K I^{q-1}V = n_1$). Next, $I^{q-1}V$ is a $K$-subspace of $I^{q-2}V$, and ${\rm dim}_K I^{q-2}V = n_1+n_2$. So, take vectors $v_{n_1+1}, v_{n_1+2}, \ldots, v_{n_1+n_2}$ such that $(v_1, v_2, \ldots, v_{n_1+n_2})$ is a basis 
for~$I^{q-2}V$. 
Continuing in this way, we construct a basis $B = (v_1, v_2, \ldots, v_n)$ for the $K$-space~$V$, where $(v_1, v_2, \ldots, v_{n_1+n_2+\cdots+n_i})$ is a basis for the $K$-space $I^{q-i}V, \ i =1, 2, \ldots, q-1$.

Now we take an arbitrary matrix $Y \in \A$. Let $\varphi \colon V \to V$ be a linear map such that the (transformation) matrix of  $\varphi$ with respect to the standard basis $E: = (e_1, e_2, \ldots, e_n)$ for $V$  is $M(\varphi)^E_E =Y$. (Note also that we have vectors $v_1$, $v_2$, \ldots, $v_n$ written in terms of E). As $I$ is an ideal of the algebra $\A$, we have $Y (I^{q-1}V) \subseteq \A (I^{q-1}V) \subseteq I^{q-1}V$. Since $(v_1, v_2, \ldots, v_{n_1})$ is a basis for $I^{q-1}V$, for any $i =1, 2, \ldots, n_1$ we can write 
$$
\varphi(v_i) = Yv_i = y_{1i}v_1+ y_{2i}v_2 + \cdots + y_{n_1,i}v_{n_i},
$$
for some scalars $y_{ji} \in K$ with $1\leq i, j \leq n_1$.
Similarly, $Y (I^{q-2}V) \subseteq \A (I^{q-2}V) \subseteq I^{q-2}V$, and $(v_1, v_2, \ldots, v_{n_1+n_2})$ is a basis of the $K$-space $I^{q-2}V$, and so for $j = n_1+1, n_1+2, \ldots, n_1+n_2$ we can write
$$
\varphi(v_j) = Yv_j = y_{1j}v_1+ y_{2j}v_2 + \ldots + y_{n_1+n_2,j}v_{n_1+n_2},
$$
for some scalars $y_{kj} \in K$ with $1 \leq k \leq n_1+n_2$ (and $n_1+1 \leq j \leq n_1+n_2)$. Continuing in this way, we eventually simply have that $Y V \subseteq V$, and with $(v_1, v_2, \ldots, v_n)$ being a basis for $V$, we can therefore, for $l = n_1+n_2+\cdots+n_{q-1}+1, n_1+n_2+\cdots+n_{q-1}+2, \ldots, n$, write 
$$
\varphi(v_l) = Yv_l = y_{1l}v_1+ y_{2l}v_2 + \ldots + y_{nl}v_n,
$$
for some scalars $y_{ml}$, with $1 \leq m \leq n$ (and $n_1+n_2+\cdots+ n_{q-1}+1 \leq l \leq n)$. Hence, using the notation 
$$
N_1:= n_1 \qquad {\rm and} \qquad N_i:=n_1 +\cdots+n_i, \ i=2,\ldots,q,
$$
which implies that $N_q=n$, the matrix~$M_B^B(\varphi)$ of the linear map $\varphi$ with respect to the basis~$B$  is the following:
$$
\left(
\begin{array}{cccccccccc}
y_{11}    & \ldots & y_{1,n_1}    & y_{1,n_1+1}           & \ldots  & y_{1,N_2}         & \ldots & y_{1,N_{q-1}+1}                    & \ldots &  y_{1,N_q}                  \\ 
\vdots    & \ddots & \vdots       & \vdots                & \ddots  & \vdots                & \ldots & \vdots                                        & \ddots &  \vdots                                \\
y_{n_1, 1}  & \ldots & y_{n_1, n_1} & y_{n_1, n_1+1}        &  \cdots & y_{n_1, N_2}      & \ldots & y_{n_1,N_{q-1}+1}                  & \ldots &  y_{n_1,N_q}                \\ 
       &        &              & y_{n_1+1, n_1+1}      &  \cdots & y_{n_1+1, N_2}    & \ldots & y_{n_1+1,N_{q-1}+1}                & \ldots &  y_{n_1+1,N_q}               \\
      &        &              & \vdots                &  \ddots & \vdots                & \ldots & \vdots                                        & \ddots &  \vdots                                  \\
        &       &              &  y_{N_2, n_1+1}   &  \ldots & y_{N_2, N_2}  & \ldots & y_{N_2,N_{q-1}+1}              & \ldots &  y_{N_2,N_q}               \\
       &      &              &                       &         &                       & \ddots &                      \vdots                         & \vdots &          \vdots                               \\
      &        &              &                       &         &                       &        & y_{N_{q-1}+1,N_{q-1}+1} & \ldots &  y_{N_{q-1}+1,N_q} \\
        &        &              &                &   &                & & \vdots                                        & \ddots &  \vdots                                  \\
       &       &              &                       &   &                       &  & y_{N_q,N_{q-1}+1}       & \ldots &  y_{N_q,N_q}      
\end{array}
\right).
$$

Consequently, $M_B^B(\varphi)$ is an element of the full subalgebra of M$_n(K)$ of type $(n_1, n_2, \ldots, n_q)$.
From the change-of-basis formula 
$$M_B^B(\varphi) = M(id)^B_E \cdot M(\varphi)^E_E \cdot M(id)_B^E = M(id)^B_E \cdot Y \cdot M(id)_B^E,$$ 
where the change-of-basis matrix $M(id)_B^E$ is the matrix with vector $v_j$ written in the $j$-th column, $j=1,2,\ldots,n,$ 
and $M(id)^B_E  = (M(id)^E_B)^{-1}$. Since $Y$ was an arbitrary matrix of  $\A$, we have proved that the algebra $X^{-1}\A X$ is in block triangular form, with $X = M(id)_B^E$. 

In order to complete the proof, it remains to show that matrices from $X^{-1}IX$ have
diagonal blocks only with zeros. Take an arbitrary matrix $Z \in I$. Let $\psi \colon V \to V$ be a linear map such that the (transformation) matrix of  $\psi$ with respect to the standard basis $E$ for~$V$  is $M(\psi)^E_E =Z$.
Since $I^q = 0$, it follows that $Z(I^{q-1}V) \subseteq I(I^{q-1}V) = 0$, and so, with $(v_1, v_2, \ldots, v_{n_1})$ being a basis for $I^{q-1}V$, we have, for $i = 1, 2, \ldots, n_1,$ 
$$\psi(v_i) = Z v_i = 0.$$ 
Next, $Z (I^{q-2}V) \subseteq I (I^{q-2}V) = I^{q-1}V$. Then for $j = n_1+1, n_1+2, \ldots, n_1+n_2 \ (=N_2)$ we can write
$$
\psi(v_j) = Z v_j = z_{1j}v_1 + z_{2j} v_2 + \ldots + z_{n_1,j}v_{n_1}
$$
for some scalars $z_{kj} \in K$ with $1 \leq k \leq n_1$ (and $n_1+1 \leq j \leq n_1+n_2$), because the basis $(v_1, v_2, \ldots, v_{n_1})$ for $I^{q-1}V$ was expanded to the basis $(v_1, \ldots, v_{n_1}, v_{n_1+1}, \ldots, v_{n_1+n_2})$ 
for~$I^{q-2}V$.
The pattern should now be clear from the arguments above, which leads us to concluding that the matrix~$M_B^B(\psi)$ of the linear map $\psi$ with respect to the basis~$B$  is the following:
$$
\left(
\begin{array}{ccccccccccccc}
\!\!\! 0 \!\!\!  & \ldots\!\!\! & 0 \!\!\!   & z_{1,n_1+1} \!\!\!          & \ldots \!\!\! & z_{1,N_2}  \!\!\!     & z_{1,N_2+1}  \!\!\!         & \ldots \!\!\! & z_{1,N_3}\!\!\!  & \ldots\!\!\! & z_{1,N_{q-1}+1}   \!\!\!                 & \ldots\!\!\! &  z_{1,N_q} \!\!\!  \!\!\!   \!\!\!            \\ 
\!\!\! \vdots \!\!\!   & \ddots\!\!\! & \vdots  \!\!\!     & \vdots  \!\!\!              & \ddots\!\!\!  & \vdots \!\!\!      & \vdots  \!\!\!              & \ddots \!\!\! & \vdots  \!\!\!         & \ldots\!\!\! & \vdots    \!\!\!                                    & \ddots\!\!\! &  \vdots            \!\!\!      \!\!\!              \\
\!\!\! 0 \!\!\! & \ldots\!\!\! & 0\!\!\! & z_{n_1, n_1+1} \!\!\!       &  \cdots\!\!\! & z_{n_1, N_2}\!\!\!  & z_{n_1, N_2+1} \!\!\!       &  \cdots\!\!\! & z_{n_1, N_3} \!\!\!   & \ldots\!\!\! & z_{n_1,N_{q-1}+1}       \!\!\!           & \ldots\!\!\! &  z_{n_1,N_q} \!\!\!  \!\!\!             \\ 
 \!\!\!       &   \!\!\!     &     \!\!\!         & 0  \!\!\!    &  \cdots\!\!\! & 0 \!\!\! & z_{n_1+1,N_2+1}   \!\!\!        & \ldots\!\!\!  & z_{n_1+1,N_3} \!\!\!  & \ldots\!\!\! & z_{n_1+1,N_{q-1}+1} \!\!\!               & \ldots\!\!\! &  z_{n_1+1,N_q} \!\!\!  \!\!\!   \!\!\!         \\
  \!\!\!     &   \!\!\!     &   \!\!\!           & \vdots     \!\!\!           &  \ddots\!\!\! & \vdots \!\!\!    & \vdots \!\!\!               &  \ddots \!\!\!& \vdots  \!\!\!          & \ldots\!\!\! & \vdots                   \!\!\!                     & \ddots \!\!\! &  \vdots                        \!\!\!   \!\!\!       \\
  \!\!\!       &  \!\!\!     &   \!\!\!           &  0 \!\!\!  &  \ldots\!\!\! & 0 \!\!\! & z_{N_2,N_2+1} \!\!\!          & \ldots \!\!\! & z_{N_2,N_3}\!\!\! & \ldots\!\!\! & z_{N_2,N_{q-1}+1}\!\!\!              & \ldots\!\!\! &  z_{N_2,N_q}       \!\!\!   \!\!\!     \\
 \!\!\!      &  \!\!\!    &      \!\!\!        &       \!\!\!                &    \!\!\!     &   \!\!\!           & \ddots\!\!\!        &\!\!\!  & \!\!\!    &                 \cdots  \!\!\!                       & \vdots \!\!\! &          \vdots\!\!\! &\vdots \!\!\! \!\!\!                             \\  
   \!\!\!      &  \!\!\!    &      \!\!\!        &       \!\!\!                &    \!\!\!     &   \!\!\!           & \!\!\!        & \ddots\!\!\! & \!\!\!    &                 \cdots  \!\!\!                       & \vdots \!\!\! &          \vdots\!\!\! &\vdots \!\!\! \!\!\!                             \\
  \!\!\!      &  \!\!\!    &      \!\!\!        &       \!\!\!                &    \!\!\!     &   \!\!\!           & \!\!\!        &\!\!\! &  \ddots\!\!\!    &                 \cdots  \!\!\!                       & \vdots \!\!\! &          \vdots\!\!\! &\vdots \!\!\! \!\!\!                             \\  
\!\!\!       &  \!\!\!     &    \!\!\!          &   \!\!\!      &\!\!\! &\!\!\! &\!\!\!              &\!\!\!   &                       &\!\!\!  & z_{N_{q-2}+1,N_{q-1}+1}  \!\!\!      & \ldots \!\!\!& z_{N_{q-2}+1,N_q}   \!\!\! \!\!\!   \\ 
 \!\!\!       &  \!\!\!     &   \!\!\!           &  \!\!\!       & \!\!\! &\!\!\! & \!\!\!             & \!\!\!  &   \!\!\!                    &\!\!\!  & \vdots \!\!\!       & \ddots\!\!\! & \vdots  \!\!\!  \!\!\!   \\    
 \!\!\!       &   \!\!\!    &  \!\!\!            &   \!\!\!      &\!\!\! &\!\!\! &  \!\!\!            & \!\!\!  &  \!\!\!                     & \!\!\! & z_{N_{q-1},N_{q-1}+1}\!\!\!        & \ldots\!\!\! & z_{N_{q-1},N_q}  \!\!\!  \!\!\!   \\        
 \!\!\!      &    \!\!\!    &    \!\!\!          &   \!\!\!      &\!\!\! &\!\!\! &   \!\!\!           & \!\!\!        &    \!\!\!                   & \!\!\!       & 0\!\!\! & \ldots\!\!\! &  0 \!\!\!\!\!\! \\
  \!\!\!       &  \!\!\!      &  \!\!\!            &  \!\!\!     &\!\!\! &\!\!\! &\!\!\!         &\!\!\!   & \!\!\!               &\!\!\! & \vdots  \!\!\!                                      & \ddots\!\!\! &  \vdots      \!\!\!   \!\!\!                         \\
  \!\!\!      &   \!\!\!    &    \!\!\!          &   \!\!\!      &\!\!\! &\!\!\! & \!\!\!             & \!\!\!  & \!\!\!                      & \!\!\! & 0 \!\!\!      & \ldots\!\!\! &  0      \!\!\!\!\!\!
\end{array}
\right).
$$
Hence, $M_B^B(\psi)$ is in the strictly upper block triangular part 
of the full subalgebra of M$_n(K)$ of type $(n_1, n_2, \ldots, n_q)$.
\end{proof}

\begin{remark} \label{numbers ni}
The numbers $n_1, n_2, \ldots, n_q$ of Lemma \ref{lemma block triangulation}, defined in the first line of the proof, are determined  by algebra $\mathcal{A}$ and  the dimensions of space $V$ and subspaces $I^{i} V$ for $i = 1, 2, \ldots, q-1$, where $V = K^n$. 
\end{remark}

Note that every finite dimensional $K$-algebra $\mathcal{A}$ 
can be identified with a subalgebra of M$_n(K)$, for $n \leq {\rm dim}_K\A$. To do this we can use, for example,  a regular representation. The Jacobson radical $J(\mathcal{A})$ of a finite dimensional algebra $\mathcal{A}$ is nilpotent (see \cite[Theorem 4.12]{Lam} for the broader class of Artinian rings), and so after such identification of $\mathcal{A}$ with a subalgebra of M$_n(K)$ we can find an algebra in block triangular form (as in the above lemma) which is a conjugated of $\mathcal{A}$. 

In the following example we start with the finite dimensional algebra $\mathcal{A}= {\rm M}_2(K[x]/(x^2))$. After identification with a subalgebra of matrices, the algebra $\mathcal{A}$ is conjugated with subalgebra of a block triangular matrices. We will describe the obtained blocks and see some ``dependence'' between them in the sense of the definition below formula (\ref{blocks2}). By $\overline{x}$ we will denote the image of $x \in K[x]$ in the natural homomorphism to the quotient algebra $K[x]/(x^2)$.

\begin{example}\label{example block triangular}
Let $\mathcal{A}$ be the finite dimensional algebra M$_2(K[x]/(x^2))$. Since, for any natural number $n$, the Jacobson radical satisfies $J({\rm M}_n(\mathcal{A})) = {\rm M}_n(J(\mathcal{A}))$ (see \cite[point (7), page 57]{Lam} ), we have $J(\mathcal{A}) = {\rm M}_2(J(K[x]/(x^2)) = {\rm M}_2(K\overline{x})$, which implies that $J(\mathcal{A})$ is a nonzero ideal with $(J(\mathcal{A}))^2 = 0$. Using the identification of an arbitrary element $a+b \overline{x}$ in $K[x]/(x^2)$ with the matrix
$
\left(
\begin{array}{cc}
a & b \\
0 & a
\end{array}
\right) \in {\rm M}_2(K)
$
we will treat (the 8-dimensional) $K$-algebra $\mathcal{A}$ as the subalgebra of M$_4(K)$ comprising all matrices of the form 
\begin{equation} \label{aij}
\left(
\begin{array}{cccc}
a_{11} & b_{11} & a_{12} & b_{12} \\ 
0      & a_{11} &      0 & a_{12} \\
a_{21} & b_{21} & a_{22} & b_{22} \\ 
0      & a_{21} &      0 & a_{22} \\
\end{array}
\right),
\end{equation}
where $a_{ij}, b_{ij} \in K$ for $1 \leq i, j \leq 2$. With this identification, we have 
\begin{equation} \label{aij in radical}
J(\mathcal{A}) = \left \{
\left(
\begin{array}{cccc}
0      & b_{11} &      0 & b_{12} \\ 
0      & 0      &      0 & 0    \\
0      & b_{21} &    0 & b_{22} \\ 
0      &  0     &      0 & 0   \\
\end{array}
\right): \ b_{ij} \in K \right\}.
\end{equation}

Now we are ready to use Lemma \ref{lemma block triangulation}. With $2$ being the nilpotency index $q$ of~$J(\mathcal{A})$, and with $V = K^4$, we have $J(\mathcal{A})V = {\rm span}(e_1, e_3)$, and so, following the notation 
in~Lemma~\ref{lemma block triangulation}, we have $n_1 = \dim_K J(\mathcal{A})V = 2$ and $n_2 = \dim_K (J(\mathcal{A}))^{0}V/J(\mathcal{A})V = \dim_K V/J(\mathcal{A})V = 2$. 
By~Lemma~\ref{lemma block triangulation}, there exists an invertible matrix $X\in {\rm M}_4(K)$ such that $X^{-1}\mathcal{A}X$ is  
a subalgebra of the full subalgebra of M$_4(K)$ of type $(2, 2)$ which is
$ \left(
\begin{array}{cc}
{\rm M}_{2}(K) & {\rm M}_{2}(K)  \\ 
               & {\rm M}_{2}(K) 
\end{array}
\right)
$,
and such that the ideal $X^{-1}J(\mathcal{A})X$ is contained in the strictly upper block triangular part  
$\left(
\begin{array}{cc}
0_2      & {\rm M}_{2}(K)\\
         & 0_2
\end{array}
\right).
$ 

In order to find such an $X$ 
we follow the proof of~Lemma~\ref{lemma block triangulation}.
 We start the construction of a basis $B$ for $V$ by first  finding basis vectors for~$J(\mathcal{A})V$. As $J(\mathcal{A})V = {\rm span}(e_1, e_3)$, we take  $(e_1, e_3)$ as a basis for~$J(\mathcal{A})V$.  As $q =2$, the second step is the last step, and in it we expand the basis $(e_1, e_3)$ to a basis for~$V$, by using $e_2$ and $e_4$, i.e., we take $B$ as $(e_1, e_3, e_2, e_4)$. 
An arbitrary matrix 
$$
\left(
\begin{array}{cccc}
a_{11} & b_{11} & a_{12} & b_{12} \\ 
0      & a_{11} &      0 & a_{12} \\
a_{21} & b_{21} & a_{22} & b_{22} \\ 
0      & a_{21} &      0 & a_{22} \\
\end{array}
\right)
$$
from algebra $\mathcal{A}$, treated as a linear map in the canonical basis~ $(e_1, e_2, e_3, e_4)$,  has 
$$
\left(
\begin{array}{cccc}
a_{11} & a_{12} & b_{11} & b_{12} \\ 
a_{21} & a_{22} & b_{21} & b_{22} \\
0      &     0  & a_{11} & a_{12} \\ 
0      &     0  & a_{21} & a_{22} \\
\end{array}
\right)$$ as transformation matrix with respect to the basis $B$.
It is obtained by conjugation with the matrix 
$$ X = 
\left(
\begin{array}{cccc}
1      &     0 &  0     & 0 \\ 
0      &     0  & 1     & 0 \\
0      &     1  & 0     & 0 \\ 
0      &     0  & 0     & 1 \\
\end{array}
\right)
$$ of vectors~$e_1, e_3, e_2, e_4$ written in the first, second, third and fourth column, respectively. Consequently, 
every matrix $A \in X^{-1}\mathcal{A}X$ is in the block form
$
\left(
\begin{array}{cc}
A_{11} & A_{12} \\
       & A_{11}
\end{array}
\right)$,
where $A_{11}$ and $A_{12}$ are any matrices from M$_2(K)$. Importantly, the two matrices in the diagonal blocks are equal (denoted here by $A_{11}$).
Since every matrix in~$J(\mathcal{A})$ has entries $a_{ij} = 0$ (see $(\ref{aij})$ and $(\ref{aij in radical})$), we have $X^{-1}J(\mathcal{A})X \subseteq 
\left(
\begin{array}{cc}
0_2      & {\rm M}_2(K) \\
         & 0_2
\end{array}
\right).
$ 

Note that this example shows an interesting isomorphism, namely conjugation of the algebra M$_2({\rm U}^*_2(K))$ with the algebra U$^*_2( {\rm M}_2(K))$.
\end{example}



\medskip

For a D$_q$ algebra $\mathcal{A}$, we denote the ideal of $\mathcal{A}$ generated by the set $\{[x,y]: \ x, y \in \mathcal{A}\}$ of commutators in $\mathcal{A}$ by $\mathcal{C}_{\A}$.

\begin{proposition} \label{proposition identities with nilpotent ideal}
If $\mathcal{A}$ is a D$_q$ algebra, 
then $\mathcal{C}_{\A}^q = 0$ and $q$ is the nilpotency index of $\C_\A$. If, in addition, $\mathcal{A}$ is a subalgebra of~M$_n(K)$, then $q \leq n$. 
\end{proposition}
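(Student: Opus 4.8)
The plan is to establish the three assertions in turn, the substance lying in the first. Write $L$ for the $K$-linear span of the set of commutators $\{[x,y] : x,y\in\A\}$. Since $\A$ is unital, the ideal $\mathcal{C}_{\A}$ is the span of the products $a[x,y]b$ with $a,b,x,y\in\A$, so $\mathcal{C}_{\A}^q$ is spanned by elements of the form $a_0[x_1,y_1]a_1[x_2,y_2]a_2\cdots[x_q,y_q]a_q$ with $a_0,\dots,a_q\in\A$. I will show every such product is zero. The basic tool is the identity $\ell a = a\ell + [\ell,a]$, valid for all $\ell,a\in\A$; when $\ell=[x,y]$ is a commutator, the term $[\ell,a]=[[x,y],a]$ is again a commutator of two elements of $\A$, and hence lies in $L$.

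First I would record the absorption property $L^k\A\subseteq \A L^k$ for every $k\ge 1$, where $L^k$ denotes the span of products of $k$ commutators. This follows by moving an algebra element leftward through a string of $k$ commutators one factor at a time: each application of $\ell a = a\ell+[\ell,a]$ either slides the element past a commutator or replaces that commutator by the new commutator $[\ell,a]\in L$, so the number of commutator factors never changes. Applying this repeatedly to $a_0[x_1,y_1]a_1\cdots[x_q,y_q]a_q$ pushes all the interior algebra elements to the left and gives $\mathcal{C}_{\A}^q=(\A L\A)^q\subseteq \A L^q\A$. Now $L^q=0$: indeed $L^q$ is spanned by products of $q$ commutators, and every such product vanishes because $\A$ is {\rm D}$_q$, i.e.\ satisfies $(\ref{Domokosq})$. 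Therefore $\mathcal{C}_{\A}^q=0$.

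For the nilpotency index, I would invoke the standing convention that $\A$ is not {\rm D}$_{q-1}$: there exist $x_1,y_1,\dots,x_{q-1},y_{q-1}\in\A$ with $[x_1,y_1]\cdots[x_{q-1},y_{q-1}]\ne 0$. This is a product of $q-1$ commutators, each lying in $\mathcal{C}_{\A}$, so $\mathcal{C}_{\A}^{q-1}\ne 0$ (and $\mathcal{C}_{\A}\ne 0$, as $q\ge 2$). Combined with $\mathcal{C}_{\A}^q=0$, this shows that $q$ is exactly the nilpotency index of $\mathcal{C}_{\A}$. Finally, if $\A\subseteq{\rm M}_n(K)$, then $\mathcal{C}_{\A}$ is a nonzero nilpotent ideal of $\A$ of nilpotency index $q$, so Lemma \ref{lemma block triangulation} applies directly and yields $q\le n$.

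The two latter claims are routine given the convention and Lemma \ref{lemma block triangulation}; the real work is the reduction $\mathcal{C}_{\A}^q\subseteq \A L^q\A$. The point to be careful about is that $\mathcal{C}_{\A}$ is strictly larger than $L$, so a product of $q$ elements of $\mathcal{C}_{\A}$ is a priori a product of $q$ commutators \emph{interspersed} with arbitrary elements of $\A$, to which the identity $(\ref{Domokosq})$ cannot be applied verbatim. The absorption lemma $L^k\A\subseteq\A L^k$ is precisely what removes the intervening elements while keeping the number of commutator factors equal to $q$; the one step to verify with care is that $[[x,y],a]$ qualifies as a commutator, so that moving an element past a commutator never raises the count of factors.
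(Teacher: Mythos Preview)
Your proof is correct and follows essentially the same strategy as the paper: use a commutator identity to eliminate the intervening algebra factors, reducing to a product of $q$ bare commutators, which vanishes by the D$_q$ identity. The paper uses $r[a,b]=[a,rb]-[a,r]b$ to push the extra factors rightward, whereas you use $\ell a=a\ell+[\ell,a]$ to push them leftward; your packaging via the absorption lemma $L^k\A\subseteq\A L^k$ is a clean way to organize the same induction.
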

\begin{proof}
In order to show that the ideal $\mathcal{C}_{\A}$ is nilpotent with $\mathcal{C}_{\A}^q = 0$, take an element $x \in \mathcal{C}_{\A}$ of the following  form: 
$$x = r_1 \cdot [x_1,y_1] \cdot r_2 \cdot [x_2,y_2]  \cdot \ldots \cdot r_q \cdot [x_q,y_q] \cdot r_{q+1}. $$
Since $\mathcal{C}_{\A}^q$ comprises (finite) sum of elements of this form, it suffices to show that $x = 0$.

For any $a, b, r \in \mathcal{A}$ we have the identity $[a,rb] =[a,r]b+r[a,b]$, and so 
$$r[a,b] = [a,rb]-[a,r]b.$$
Applying the last equality to $x_1, y_1, r_1$,  we have
\begin{align*}
&r_1 \cdot [x_1,y_1]\cdot r_2 \cdot [x_2,y_2] \ldots \cdot r_q \cdot[x_q,y_q] \cdot r_{q+1} \\
&= ([x_1,r_1y_1]-[x_1,r_1]y_1) \cdot r_2 \cdot [x_2,y_2] \cdot \ldots \cdot r_q \cdot[x_q,y_q] \cdot r_{q+1} \\
&= [x_1,r_1y_1] \cdot r_2 \cdot [x_2,y_2] \cdot \ldots \cdot r_q \cdot[x_q,y_q] \cdot r_{q+1}\\
&-[x_1,r_1] \cdot y_1  r_2 \cdot [x_2,y_2] \cdot r_3 \cdot [x_3,y_3] \cdot \ldots \cdot r_q \cdot[x_q,y_q] \cdot r_{q+1}.
\end{align*}
Next we can write
\begin{align*}
& [x_1,r_1y_1] \cdot r_2 \cdot [x_2,y_2] \cdot r_3 \cdot [x_3,y_3] \cdot r_4 \ldots \cdot r_q \cdot[x_q,y_q] \cdot r_{q+1}  \\
&= [x_1,r_1y_1] \cdot ([x_2,r_2y_2]-[x_2,r_2]y_2) \cdot r_3 \cdot [x_3,y_3] \cdot r_4 \ldots \cdot r_q \cdot[x_q,y_q] \cdot r_{q+1} \\
&= [x_1,r_1y_1] [x_2,r_2y_2] \cdot r_3 \cdot [x_3,y_3] \cdot r_4 \cdot [x_4,y_4] \cdot \ldots \cdot r_q \cdot[x_q,y_q] \cdot r_{q+1} \\
&- [x_1,r_1y_1][x_2,r_2] \cdot y_2  r_3 \cdot [x_3,y_3] \cdot r_4 \cdot [x_4,y_4] \cdot \ldots \cdot r_q \cdot[x_q,y_q] \cdot r_{q+1}
\end{align*}
and
\begin{align*}
& [x_1,r_1] \cdot y_1  r_2 \cdot [x_2,y_2] \cdot r_3 \cdot [x_3,y_3] \cdot \ldots \cdot r_q \cdot[x_q,y_q] \cdot r_{q+1}  \\
&= [x_1,r_1] ([x_2,y_1 r_2 y_2]-[x_2,y_1r_2]y_2) \cdot r_3 \cdot [x_3,y_3] \cdot \ldots \cdot r_q \cdot[x_q,y_q] \cdot r_{q+1}  \\
&= [x_1,r_1] [x_2,y_1r_2y_2] \cdot r_3 \cdot [x_3,y_3] \cdot r_4 \cdot [x_4,y_4] \cdot \ldots \cdot r_q \cdot[x_q,y_q] \cdot r_{q+1} \\
&- [x_1,r_1] [x_2,y_1r_2] \cdot y_2 r_3 \cdot [x_3,y_3] \cdot r_4 \cdot [x_4,y_4] \cdot \ldots \cdot r_q \cdot[x_q,y_q] \cdot r_{q+1}.
\end{align*}
Continuing is this way, it is evident that $x$ can be written as a sum of elements
of the form~
$$\pm ( [x'_1,y'_1] [x'_2,y'_2] \ldots [x'_q,y'_q])r $$
for some $x'_1, y'_1, x'_2, y'_2, \ldots, x'_q, y'_q, r \in \mathcal{A}$.
Such elements are all equal to zero, because $\mathcal{A}$ is  a D$_{q}$ algebra. Hence, $\mathcal{C}_{\A}^q = 0$. 

Recall from the discussion preceding Theorem \ref{Domokos} that we always assume that $q>1$ and that D$_q$ algebra $\A$ is not a D$_{q-1}$ algebra. So $q$ is the nilpotency index of $\C_{\A}$.

If, in addition, $\mathcal{A}$ is a subalgebra of M$_n(K)$, 
then it follows 
from~Lemma~\ref{lemma block triangulation} that $q \leq n$. 
\end{proof}

Obviously, Proposition \ref{proposition identities with nilpotent ideal} implies the following fact:
\begin{corollary} \label{Dn+1 are Dn}
For every positive integer $n$ there are not $D_{q}$ subalgebras of M$_n(K)$ for every $q>n$.
\end{corollary}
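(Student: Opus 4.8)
The plan is to obtain this immediately from Proposition \ref{proposition identities with nilpotent ideal} by contraposition. Suppose $\A$ is a $D_q$ subalgebra of ${\rm M}_n(K)$. By the standing convention adopted before Theorem \ref{Domokos}, we have $q \ge 2$ and $\A$ is not $D_{q-1}$; in particular $\A$ is not commutative, so its commutator ideal $\C_\A$ is nonzero. Proposition \ref{proposition identities with nilpotent ideal} then asserts that $\C_\A$ is a nonzero nilpotent ideal whose nilpotency index is exactly $q$, and that $q \le n$ once $\A \subseteq {\rm M}_n(K)$. From this the corollary is immediate: if $q > n$, then no subalgebra of ${\rm M}_n(K)$ can be $D_q$ in our sense (i.e.\ $D_q$ but not $D_{q-1}$), for otherwise Proposition \ref{proposition identities with nilpotent ideal} would give $q \le n$, a contradiction.

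For completeness I would recall why the inequality $q \le n$ holds at the level of the Proposition, since that is where the genuine content sits. The proof feeds the nonzero nilpotent ideal $\C_\A$, with $\C_\A^q = 0$ and $\C_\A^{q-1} \ne 0$, into Lemma \ref{lemma block triangulation} applied to $V = K^n$. The argument there shows that the chain of subspaces $0 \subseteq \C_\A^{q-1}V \subseteq \C_\A^{q-2}V \subseteq \cdots \subseteq \C_\A V \subseteq V$ consists of proper inclusions, so their dimensions are $q+1$ strictly increasing integers between $0$ and $n$; this forces $q \le n$. Thus the block-triangularization of Lemma \ref{lemma block triangulation}, transmitted through Proposition \ref{proposition identities with nilpotent ideal}, is doing all the work.

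There is no substantive obstacle to overcome here, which is why the word \emph{``Obviously''} is appropriate: the entire statement is a reformulation of the bound $q \le n$ already established. The only point requiring a moment's care is to confirm that the standing convention makes $q$ the \emph{exact} nilpotency index of $\C_\A$ rather than merely an upper bound for it, so that the conclusion $q \le n$ of the Proposition translates faithfully into the stated non-existence of $D_q$ subalgebras for $q > n$.
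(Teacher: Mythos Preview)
Your proof is correct and follows exactly the paper's approach: the paper simply states that the corollary follows ``obviously'' from Proposition~\ref{proposition identities with nilpotent ideal}, and your argument spells out precisely that implication. The additional recollection of why $q \le n$ holds via Lemma~\ref{lemma block triangulation} is accurate but goes beyond what the paper itself records here.
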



\medskip

\section{Maximal D$_q$ subalgebras of M$_n(K)$ are conjugated with D$_q$ subalgebras with max-comm db's}\label{section3}  

In this section we will characterize, up to conjugation, 
maximal D$_q$ subalgebras of M$_n(K)$, in particular these with maximum dimension.

We will show in Theorem \ref{theorem block form for D_q} that every D$_q$ subalgebra of M$_n(K)$ is  conjugated with  a subalgebra of a full subalgebra of M$_n(K)$ of type $(n_1, n_2, \ldots, n_q)$, and it posseses some interesting additional properties. 

Using this result we prove in Theorem \ref{corollary block} that every
maximal D$_q$ 
subalgebra of M$_n(K)$ is  conjugated with  a {\rm D}$_q$ subalgebra of~{\rm M}$_n(K)$ of type $(n_1, n_2, \ldots, n_q)$ with max-comm db's.

 We then conclude in Corollary \ref{cormaintheorem} that every D$_q$ subalgebra of M$_n(K)$ with maximum dimension is conjugated with  max-dim {\rm D}$_q$ subalgebra of~{\rm M}$_n(K)$ of type $(n_1, n_2, \ldots, n_q)$ with max-dim db's.

\begin{theorem} \label{theorem block form for D_q}
Let $\mathcal{A}$ be a D$_q$ subalgebra of M$_n(K)$.
Then there exist positive integers $n_1, n_2, \ldots, n_q$, such that $\sum_{i=1}^q n_i = n$ and an invertible matrix $X \in {\rm M}_n(K)$, such that every matrix~$A'$ in the algebra~$\mathcal{A}' = X^{-1}\mathcal{A}X$ is in block triangular form
\begin{equation} \label{blok triangular 2}
\left(
\begin{array}{cccc}
A'_{11} & A'_{12}  & \ldots      & A'_{1q} \\ 
        & A'_{22}  & \ldots      & A'_{2q}  \\ 
        &         & \ddots      & \vdots   \\ 
        &         &             &   A'_{qq}            
\end{array}
\right),
\end{equation}
where  $A'_{ij} \in {\rm M}_{n_i \times n_j}(K)$
for all $i$ and $j$ such that $1 \leq i \leq j \leq q$ (and 
other entries are zero) and $\overline{\mathcal{A'}}_{ii}$, defined in (\ref{block}), is a commutative subalgebra of M$_{n_i}(K)$ for every $i, \ i =1,2 \ldots, q$. 
\end{theorem}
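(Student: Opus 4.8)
The plan is to construct the desired conjugating matrix $X$ by applying Lemma \ref{lemma block triangulation} to a carefully chosen nilpotent ideal of $\A$, and then to read off the commutativity of the diagonal blocks from the fact that this ideal captures \emph{all} commutators of $\A$. First I would take $I = \C_\A$, the ideal of $\A$ generated by the commutators $[x,y]$, $x,y \in \A$. By Proposition \ref{proposition identities with nilpotent ideal}, since $\A$ is D$_q$ (and, under the standing convention, not D$_{q-1}$, so in particular not commutative), the ideal $\C_\A$ is nonzero, nilpotent, and has nilpotency index exactly $q$. Thus $\C_\A$ is precisely the type of ideal to which Lemma \ref{lemma block triangulation} applies.

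Next I would invoke Lemma \ref{lemma block triangulation} with this $I = \C_\A$. It produces positive integers $n_1, \ldots, n_q$ with $\sum_{i=1}^q n_i = n$ and an invertible $X \in {\rm M}_n(K)$ such that $\A' := X^{-1}\A X$ is contained in the full subalgebra of ${\rm M}_n(K)$ of type $(n_1, \ldots, n_q)$; in particular every $A' \in \A'$ is block upper triangular with blocks $A'_{ij} \in {\rm M}_{n_i \times n_j}(K)$ for $i \leq j$ and zeros below the block diagonal, which is exactly the form asserted in (\ref{blok triangular 2}). Moreover, the same lemma guarantees that $X^{-1}\C_\A X$ lies in the strictly upper block triangular part, i.e.\ every matrix in $X^{-1}\C_\A X$ has all of its diagonal blocks equal to zero.

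It then remains to prove commutativity of $\overline{\A'}_{ii}$. The key observation is that for block upper triangular matrices, block multiplication gives $(A'B')_{ii} = A'_{ii}B'_{ii}$, since the only index $k$ with $i \leq k \leq i$ is $k=i$. Hence the projection $A' \mapsto A'_{ii}$ is an algebra homomorphism $\A' \to {\rm M}_{n_i}(K)$ whose image is exactly $\overline{\A'}_{ii}$ (as defined in (\ref{block})), so $\overline{\A'}_{ii}$ is a subalgebra of ${\rm M}_{n_i}(K)$. For any $A', B' \in \A'$ the commutator $[A',B'] = X^{-1}[A,B]X$, where $A = XA'X^{-1}$ and $B = XB'X^{-1}$ lie in $\A$; since $[A,B] \in \C_\A$, we get $[A',B'] \in X^{-1}\C_\A X$, which has vanishing diagonal blocks. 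By the same block multiplication rule, the $(i,i)$ block of $[A',B']$ equals $[A'_{ii}, B'_{ii}]$. Therefore $[A'_{ii}, B'_{ii}] = 0$ for all $A', B' \in \A'$, i.e.\ $\overline{\A'}_{ii}$ is commutative for each $i$, as required.

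The only genuinely substantive input is Lemma \ref{lemma block triangulation}, together with the identification of the correct ideal to feed into it. Once the commutator ideal $\C_\A$ is recognized as nilpotent of index $q$ (Proposition \ref{proposition identities with nilpotent ideal}) and placed in the strictly upper block triangular part, the commutativity of the diagonal blocks is a one-line computation via block multiplication. I expect the main conceptual point---rather than a real obstacle---to be verifying that \emph{every} commutator of $\A'$, and not merely the generators, has vanishing diagonal blocks; this is immediate here precisely because $\C_\A$ is an ideal and conjugation by $X$ is an algebra isomorphism, so $X^{-1}\C_\A X$ already contains all commutators of $\A'$.
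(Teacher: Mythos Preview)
Your proof is correct and follows essentially the same approach as the paper: apply Proposition~\ref{proposition identities with nilpotent ideal} to see that $\C_\A$ is nilpotent of index $q$, feed it into Lemma~\ref{lemma block triangulation} to obtain the block triangular form with $X^{-1}\C_\A X$ strictly upper block triangular, and then observe that the $(i,i)$ block of any commutator in $\A'$ is $[A'_{ii},B'_{ii}]$ and must vanish. The paper phrases the last step by first noting $X^{-1}\C_\A X=\C_{\A'}$ via conjugation being an isomorphism, while you compute $[A',B']=X^{-1}[A,B]X\in X^{-1}\C_\A X$ directly; these are the same argument.
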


\begin{proof}


By Proposition \ref{proposition identities with nilpotent ideal}, $\mathcal{C}_{\A}^q = 0$, where $\mathcal{C}_{\A}$ is the ideal of $\mathcal{A}$ generated by all commutators in~$\mathcal{A}$, and $q$ is the nilpotency index of $\mathcal{C}_{\A}$.
Recall from the discussion preceding Theorem \ref{Domokos} that we always assume that $q>1$. 
Thus, by Lemma \ref{lemma block triangulation}, there exists an invertible matrix $X \in {\rm M}_n(K)$ such that every matrix $A'$ in the algebra $\mathcal{A}' = X^{-1}\mathcal{A}X$ is in the block triangular form (\ref{blok triangular 2}), and 
the ideal~$X^{-1} \mathcal{C}_{\A} X$ of the algebra 
$\mathcal{A}'$
comprises zero matrices in the diagonal blocks.

It remains to show that, for $i = 1, 2, \ldots, q$, the subalgebra $\overline{\mathcal{A}'}_{ii}$ of M$_{n_i}(K)$ is commutative. Firstly, we will say more about the structure of the ideal $\C_{\A'}$ generated by all commutators $[z,w]$, $z,w \in \mathcal{A}'$.
Since conjugation is an isomorphism of algebras, it follows readily that the ideal $X^{-1} \mathcal{C}_{\A}X$ is equal to $\C_{\A'}$. 
Therefore 

$$
\C_{\A'} \subseteq
\left(
\begin{array}{ccccc}
0_{n_1} & {\rm M}_{n_1\times n_2}(K) & {\rm M}_{n_1\times n_3}(K) & \cdots      & {\rm M}_{n_1\times n_q}(K)\\ 
       \\
        & 0_{n_2}              & {\rm M}_{n_2\times n_3}(K) & \cdots      & {\rm M}_{n_2\times n_q}(K)  \\  \\
        \\
        &                      & \ddots               & \ddots      & \vdots                \\ 
        \\
        &                      &                      & \ddots      & {\rm M}_{n_{q-1}\times n_q}(K)\\  
        \\
        &                      &                      &             & 0_{n_q}           
\end{array}
\right).
$$

To complete the proof let $X_{ii}$, $Y_{ii}
\in\overline{\mathcal{A}'}_{ii}$. 
Then, by the definition,
there are block triangular matrices $X,Y\in\mathcal{A'}$ such that
$$
X = 
\left(
\begin{array}{cccccc}
X_{11} & \ldots  & X_{1i}  & \ldots   & X_{1q} \\ 
       & \ddots  & \vdots  & \ddots   & \vdots  \\ 
       &         & X_{ii}  & \ldots   & X_{iq} \\
       &         &         & \ddots   & \vdots \\
       &         &         &          & X_{qq}            
\end{array}
\right)
\quad {\rm and} \quad  
Y=
\left(
\begin{array}{cccccc}
Y_{11} & \ldots  & Y_{1i}  & \ldots   & Y_{1q} \\ 
       & \ddots  & \vdots  & \ddots   & \vdots  \\ 
       &         & Y_{ii}  & \ldots   & Y_{iq} \\
       &         &         & \ddots   & \vdots \\
       &         &         &          & Y_{qq}            
\end{array}
\right). 
$$  
The commutator $[X,Y]$ has the matrix $[X_{ii}, Y_{ii}]$ in the $i$-th diagonal block. Since we showed in the preceding paragraph that the diagonal blocks of the ideal generated by all commutators of $\mathcal{A}'$ are zero, we conclude that $[X_{ii}, Y_{ii}] = 0_{n_i}$, which completes the proof.
\end{proof}

Next we show that if, in addition, $\mathcal{A}$ is a maximal D$_q$ subalgebra of M$_n(K)$, then the obtained 
algebras $\overline{\mathcal{A'}}_{ii}$ above are independent (see the definition below formula (\ref{blocks2})).
To be precise, we have  the following characterization: 

\begin{theorem} \label{corollary block}
Let $\mathcal{A}$ be a maximal D$_q$ 
subalgebra of M$_n(K)$. 
Then there exists an invertible matrix~$X \in {\rm M}_n(K)$ such that $X^{-1}\mathcal{A}X$ is a 
{\rm D}$_q$ subalgebra of~{\rm M}$_n(K)$ of some type $(n_1, n_2, \ldots, n_q)$ with max-comm db's.
\end{theorem}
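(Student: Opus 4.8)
The plan is to start from the block-triangular normal form already provided by Theorem \ref{theorem block form for D_q} and then use maximality to simultaneously fill in the strictly upper blocks completely and enlarge each diagonal block to a maximal commutative subalgebra. Concretely, I would first apply Theorem \ref{theorem block form for D_q} to obtain positive integers $n_1,\dots,n_q$ with $\sum_i n_i=n$ and an invertible $X\in{\rm M}_n(K)$ such that $\mathcal{A}'=X^{-1}\mathcal{A}X$ is block upper triangular of sizes $(n_1,\dots,n_q)$ and each diagonal-block algebra $\overline{\mathcal{A}'}_{ii}$ (defined in (\ref{block})) is a \emph{commutative} subalgebra of ${\rm M}_{n_i}(K)$. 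Since conjugation by $X$ is an automorphism of ${\rm M}_n(K)$ that preserves the identity (\ref{Domokosq}) and all inclusions, $\mathcal{A}'$ is again a maximal D$_q$ subalgebra of M$_n(K)$, so it suffices to establish the conclusion for $\mathcal{A}'$.

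Next I would build a candidate overalgebra $\mathcal{B}$ and check that it is D$_q$ and contains $\mathcal{A}'$. For each $i$, since $\overline{\mathcal{A}'}_{ii}$ is commutative and ${\rm M}_{n_i}(K)$ is finite dimensional, choose a maximal commutative subalgebra $\mathcal{C}_{ii}$ of ${\rm M}_{n_i}(K)$ with $\overline{\mathcal{A}'}_{ii}\subseteq\mathcal{C}_{ii}$. Let $\mathcal{B}$ be the block upper triangular subalgebra of type $(n_1,\dots,n_q)$ whose diagonal blocks are the $\mathcal{C}_{ii}$ and whose strictly upper blocks are the full spaces ${\rm M}_{n_i\times n_j}(K)$. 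This $\mathcal{B}$ is a unital subalgebra: a product of block upper triangular matrices is block upper triangular with $(i,i)$-block equal to the product of the two $(i,i)$-blocks, which stays in the subalgebra $\mathcal{C}_{ii}$, and the identity has each diagonal block $I_{n_i}\in\mathcal{C}_{ii}$. By Remark \ref{typicaldq}, $\mathcal{B}$ is D$_q$ because its diagonal blocks are commutative. Moreover $\mathcal{A}'\subseteq\mathcal{B}$, since any $A'\in\mathcal{A}'$ is block upper triangular with $(i,i)$-block in $\overline{\mathcal{A}'}_{ii}\subseteq\mathcal{C}_{ii}$ and strictly upper blocks in ${\rm M}_{n_i\times n_j}(K)$, hence $A'\in\mathcal{B}$.

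Maximality then closes the argument: $\mathcal{B}$ is a D$_q$ subalgebra of M$_n(K)$ containing the maximal D$_q$ subalgebra $\mathcal{A}'$, so $\mathcal{A}'=\mathcal{B}$. By construction $\mathcal{B}$ is of type $(n_1,\dots,n_q)$ with max-comm db's (its diagonal blocks are the maximal commutative $\mathcal{C}_{ii}$ and its strictly upper blocks are full), and therefore $X^{-1}\mathcal{A}X=\mathcal{A}'=\mathcal{B}$ has exactly the asserted form.

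I expect the only points needing genuine care to be the reduction ``$\mathcal{A}$ maximal $\Rightarrow\mathcal{A}'$ maximal'' and the verification that $\mathcal{B}$ really is a subalgebra of type $(n_1,\dots,n_q)$ in the precise sense of Definition \ref{of type} — in particular that its diagonal blocks are \emph{independent}. Here independence is automatic, because $\mathcal{B}$ is defined as the \emph{full} set of block upper triangular matrices with the prescribed diagonal blocks, so the matrices with a single nonzero diagonal block $\mathcal{C}_{ii}$ and the matrices with a single full off-diagonal block all lie in $\mathcal{B}$. Everything else is direct bookkeeping from Theorem \ref{theorem block form for D_q} and Remark \ref{typicaldq}.
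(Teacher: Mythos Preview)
Your proof is correct and follows essentially the same approach as the paper's: apply Theorem \ref{theorem block form for D_q}, build the block upper triangular algebra $\mathcal{B}$ of type $(n_1,\dots,n_q)$ with commutative diagonal blocks and full off-diagonal blocks, observe it is D$_q$ via Remark \ref{typicaldq}, and use maximality of $\mathcal{A}'$ to force equality. The only cosmetic difference is that the paper first takes $\mathcal{B}$ with diagonal blocks $\overline{\mathcal{A}'}_{ii}$, concludes $\mathcal{A}'=\mathcal{B}$, and \emph{then} argues by contradiction that each $\overline{\mathcal{A}'}_{ii}$ must already be maximal commutative, whereas you enlarge each $\overline{\mathcal{A}'}_{ii}$ to a maximal commutative $\mathcal{C}_{ii}$ up front and invoke maximality once; your ordering is a mild streamlining of the same argument.
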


\begin{proof}

By Theorem \ref{theorem block form for D_q},
there exists an invertible matrix $X$ such that every matrix $A' \in \A' = X^{-1}\mathcal{A}X$ is in block triangular form
$$
\left(
\begin{array}{cccc}
A'_{11} & A'_{12}  & \ldots      & A'_{1q} \\ 
       & A'_{22}  & \ldots      & A'_{2q}  \\ 
       &         & \ddots      & \vdots   \\ 
       &         &             & A'_{qq}            
\end{array}
\right),
$$
where $A'_{ij} \in {\rm M}_{n_i \times n_j}(K)$ for $1 \leq i \leq j \leq q$ and each $\overline{\A'}_{ii}$ is a commutative subalgebra of M$_{n_i}(K)$. 


Let $\B$ be the subalgebra of M$_n(K)$ of type $(n_1, n_2, \ldots, n_q)$ equal to

$$
\left(
\begin{array}{ccccc}
\overline{\A'}_{11} & {\rm M}_{n_1\times n_2}(K) & {\rm M}_{n_1\times n_3}(K) & \cdots      & {\rm M}_{n_1\times n_q}(K)\\ 
       \\
        & \overline{\A'}_{22}              & {\rm M}_{n_2\times n_3}(K) & \cdots      & {\rm M}_{n_2\times n_q}(K)  \\  \\
        \\
        &                      & \ddots               & \ddots      & \vdots                \\ 
        \\
        &                      &                      & \ddots      & {\rm M}_{n_{q-1}\times n_q}(K)\\  
        \\
        &                      &                      &             & \overline{\A'}_{qq}          
\end{array}
\right).
$$
Note that by Remark \ref{typicaldq} $\B$ is a D$_q$ subalgebra of M$_n(K)$. Since $\A$ is a maximal D$_q$ subalgebra of~M$_n(K)$, it follows that $\A' = X^{-1}\A X$ 
is also a maximal D$_q$ subalgebra of~M$_n(K)$. So from the inclusion $\A' \subseteq \B$ we obtain the equality $\A' = \B$.
 
In order to complete the proof we will show that each $\overline{\A}_{ii}$ is a maximal commutative subalgebra of~M$_{n_i}(K)$. Suppose, for the contrary, that, for some $j \in \{1, 2, \ldots, q \}$, the diagonal block $\overline{\A}_{jj}$ is properly contained in a commutative  subalgebra $C_{jj}$ of M$_{n_j}(K)$. Then changing $\overline{\A}_{jj}$ to $C_{jj}$ produces a  D$_q$ subalgebra of M$_n(K)$ properly containing $\A'$, a contradiction. It completes the proof.  
\end{proof}

\begin{remark} \label{maxtuple}

Similar to Remark \ref{numbers ni}, the $q$-tuple $(n_1, n_2, \ldots, n_q)$ obtained in the proof of Theorem~\ref{corollary block} is determined by the dimensions of the vector space $V$ and the subspaces $\C_\A^i V$ for $i =1, 2, \ldots, q$, where $V = K^n$ and $\C_\A$ is the ideal generated by all commutators of the maximal D$_q$ subalgebra $\A$ of~M$_n(K)$. 

We will show in Theorem \ref{max up to conj} that two D$_q$ subalgebras of types $(n_1, n_2, \ldots, n_q)$ and $(\ell_1, \ell_2, \ldots, \ell_q)$ with max-comm db's are conjugated if and only if  $(n_1, n_2, \ldots, n_q)=(\ell_1, \ell_2, \ldots, \ell_q)$ (i.e., the $q$-tuple is uniquely determined) and the diagonal blocks of the D$_q$ algebras are pairwise conjugated. 

In summary, with an arbitrary maximal D$_q$ subalgebra $\A$ of M$_n(K)$ we can associate exactly one tuple $(n_1, n_2, \ldots, n_q)$ such that $\A$ is conjugated with a D$_q$ subalgebra of M$_n(K)$ of type $(n_1, n_2, \ldots, n_q)$ with max-comm db's.  

\end{remark}

If $\A$ is a D$_q$ subalgebra of M$_n(K)$ with maximum dimension, then by Theorem \ref{corollary block}, $\A$ is conjugated
with a D$_q$ subalgebra $\A'$ of M$_n(K)$ of some type $(n_1, n_2, \dots , n_q)$ with max-comm db’s. As in the
proof of Theorem \ref{corollary block}, if one of the diagonal blocks $\A'_{jj}$ of $\A'$ is not a commutative subalgebra of M$_{n_j}(K)$
with maximum dimension, then we can change this block and obtain a D$_q$ subalgebra with dimension greater
than that of $\A'$. This contradiction yields to following result:

\begin{corollary}\label{cormaintheorem}
Let $\mathcal{A}$ be a D$_q$ subalgebra of M$_n(K)$ with maximum dimension. Then there exists an invertible matrix $X$ such that $X^{-1}\mathcal{A}X$ is a 
max-dim {\rm D}$_q$ subalgebra of~{\rm M}$_n(K)$ of some type $(n_1, n_2, \ldots, n_q)$ with max-dim db's.
\end{corollary}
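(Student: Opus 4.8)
The plan is to deduce the corollary from Theorem~\ref{corollary block} together with a block-replacement argument. The first observation is that a D$_q$ subalgebra $\mathcal{A}$ of M$_n(K)$ of maximum dimension is in particular a \emph{maximal} D$_q$ subalgebra: if $\mathcal{A}$ were properly contained in some D$_q$ subalgebra $\mathcal{B}$, then $\dim_K \mathcal{B} > \dim_K \mathcal{A}$, contradicting the maximality of $\dim_K \mathcal{A}$. Hence Theorem~\ref{corollary block} applies, and there is an invertible $X \in {\rm M}_n(K)$ such that $\mathcal{A}' = X^{-1}\mathcal{A}X$ is a D$_q$ subalgebra of M$_n(K)$ of some type $(n_1, n_2, \ldots, n_q)$ with max-comm db's. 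Since conjugation is a $K$-algebra isomorphism, $\dim_K \mathcal{A}' = \dim_K \mathcal{A}$, so $\mathcal{A}'$ again has maximum dimension among all D$_q$ subalgebras of M$_n(K)$.

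Next I would compute the dimension of $\mathcal{A}'$. Because the off-diagonal blocks of $\mathcal{A}'$ are the full blocks ${\rm M}_{n_i \times n_j}(K)$ (Definition~\ref{of type}), its dimension splits as
\begin{equation}
\dim_K \mathcal{A}' = \sum_{i=1}^q \dim_K \overline{\mathcal{A}'}_{ii} + \sum_{1 \le i < j \le q} n_i n_j .
\end{equation}
I would then argue that each diagonal block already has maximum commutative dimension. Suppose, for contradiction, that $\dim_K \overline{\mathcal{A}'}_{jj} < \lfloor n_j^2/4 \rfloor + 1$ for some $j$. Replacing $\overline{\mathcal{A}'}_{jj}$ by a commutative subalgebra $C_{jj}$ of ${\rm M}_{n_j}(K)$ of the maximum dimension $\lfloor n_j^2/4 \rfloor + 1$, and leaving all other blocks unchanged, yields a block triangular subalgebra of the same type $(n_1, \ldots, n_q)$ whose diagonal blocks are all commutative; by Remark~\ref{typicaldq} it is again a D$_q$ subalgebra of M$_n(K)$. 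The displayed dimension formula shows that this new algebra has dimension strictly larger than $\dim_K \mathcal{A}'$, contradicting the maximality established above. Hence $\dim_K \overline{\mathcal{A}'}_{ii} = \lfloor n_i^2/4 \rfloor + 1$ for every $i$, that is, $\mathcal{A}'$ has max-dim db's.

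Finally, since $\mathcal{A}'$ is a D$_q$ subalgebra of maximum dimension (equal to that of $\mathcal{A}$) and has max-dim db's, it satisfies both requirements of Definition~\ref{abbreviations}(3), so $\mathcal{A}'$ is a max-dim D$_q$ subalgebra of M$_n(K)$ of type $(n_1, \ldots, n_q)$ with max-dim db's, which is exactly the assertion.

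The argument is short once Theorem~\ref{corollary block} is in hand, and the only step requiring genuine care is the block replacement. One must check both that swapping a single diagonal block keeps the set a subalgebra (which holds because, in the block triangular form of Definition~\ref{of type}, the $(i,i)$ block of a product depends only on the $(i,i)$ blocks of the factors while the off-diagonal blocks remain full) and that the new diagonal block keeps all diagonal blocks commutative, so that Remark~\ref{typicaldq} still guarantees the D$_q$ identity. I do not expect any substantive obstacle beyond this bookkeeping, since maximality of dimension already does the decisive work of forcing each block to be of maximum size.
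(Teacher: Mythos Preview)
Your proof is correct and follows essentially the same approach as the paper: apply Theorem~\ref{corollary block} (after noting that maximum dimension implies maximal), and then use the block-replacement argument to conclude that each diagonal block must already be a commutative subalgebra of maximum dimension. Your version is slightly more detailed in justifying why Theorem~\ref{corollary block} applies and why the replaced algebra is still D$_q$, but the strategy is identical.
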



\section{Structure of D$_q$ subalgebras with max-comm db's}\label{section 4}
In Section \ref{section3} (see Theorem \ref{corollary block}) we showed that every maximal D$_q$ subalgebra of M$_n(K)$ is conjugated to a {\rm D}$_q$ subalgebra of~{\rm M}$_n(K)$ of type $(n_1, n_2, \ldots, n_q)$ with max-comm db's. In the present section, in Theorem \ref{typical are maximal}, we will prove that the converse is also true. 

Next, we will further analyze conjugations of {\rm D}$_q$ subalgebras of~M$_n(K)$.
In Proposition \ref{conjugation block triangular}, we will establish that conjugation, which satisfies some additional properties, of a 
D$_q$~subalgebra of~{\rm M}$_n(K)$ of any type $(n_1, n_2, \ldots, n_q)$ with max-comm db's  
is also a D$_q$ subalgebra of~{\rm M}$_n(K)$ of the same type with max-comm db's. 
A consequence is Corollary \ref{upper triangular max}, in which we will show that if $\mathcal{A}$ is a maximal D$_q$ subalgebra 
of~M$_n(K)$ contained in~U$_n(K)$, then  $\mathcal{A}$ is a D$_q$ subalgebra of M$_n(K)$ of some 
type~$(n_1,n_2,\ldots, n_q)$ with max-comm db's. It leads us to a negative answer to Question 9 posed in \cite{LM2}. Using 
Definition~\ref{abbreviations} and the paragraph immediately following it, we can rephrase the mentioned question as follows:    

\begin{question}\cite[Question 9]{LM2} \label{question typical}
For a field $K$, is there, for some $n$, a D$_2$ $K$-subalgebra of the upper triangular matrix algebra U$_n(K)$ with maximum dimension $2+\left\lfloor\frac{3n^2}{8}\right\rfloor$ which is not a max-dim D$_2$~subalgebra of M$_n(K)$ of some type $(n_1,n_2)$ with max-dim db's?
\end{question}    

In the same paper (see \cite[Theorem 15]{LM2}), a block triangular structure as in max-dim D$_q$ subalgebras of M$_n(K)$ with max-dim db's was proven for D$_2$ subalgebras of M$_n(K)$ with maximum dimension which are contained in U$_n(K)$ and satisfy some additional conditions. Corollary \ref{upper triangular max} generalizes this result.

Moreover, from Proposition \ref{conjugation block triangular} we obtain Theorem \ref{max up to conj}, which says that any D$_q$ subalgebras $\A$ and $\B$ of M$_n(K)$ 
with max-comm db's $\A_{ii}$ and $\B_{ii}$, respectively, $i =1, 2, \ldots, q$,  are conjugated if and only if they are of the same type and for each $i, \ i =1, 2, \ldots, q$,\  $\A_{ii}$ and $\B_{ii}$ are conjugates. It reduces the conjugation problem of maximal D$_q$ subalgebras of M$_n(K)$ to the conjugation problem of commutative subalgebras of M$_\ell(K)$, for $\ell = 1, 2, \ldots, n-1$. 
We will discuss the obtained result in Section \ref{remarks on jacobson}, restricting our attention to algebraically closed fields.

Recalling Proposition \ref{proposition identities with nilpotent ideal}, the first result in the present section describes powers of the ideal~$\mathcal{C}_{\A}$ generated by all commutators of a  
D$_q$ subalgebra $\A$ of M$_n(K)$ of type $(n_1, n_2, \ldots, n_q)$ with max-comm~db's.

\begin{proposition} \label{proposition power of ideal generated by commutators}
If $\A$ is D$_q$ subalgebra of~{\rm M}$_n(K)$ of type $(n_1, n_2, \ldots, n_q)$ with max-comm db's, then, for $i=1,2,\ldots,q-1,$

$$
\mathcal{C}_{\A}^i=\left(
\begin{array}{ccrccc} 
0_{n_1}  \ \ \ \ & \ldots  \ \  \ \  & 0_{n_1 \times n_{i}}  & \ \ \ {\rm M}_{n_1 \times n_{i+1}}(K) &  \ \ \ \ldots  & \ \ \ {\rm M}_{n_1 \times n_q}(K)      \\
\\ \\
        &  \ddots    &  \ddots               &   \ddots                  &  \ddots & \vdots                     \\
\\ \\
  		&            &  \ddots               &   \ddots                  & \ddots  & {\rm M}_{n_{q-i} \times n_q}(K)  \\
  		\\
        &            &                       &    \ddots                 &  \ddots &  0_{n_{q-i+1} \times n_q}  \\
        \\
        &            &                       &                           &  \ddots & \vdots                     \\
        \\
        &            &                       &                           &         & 0_{n_q}          
\end{array}
\right).
$$
\end{proposition}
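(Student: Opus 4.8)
\emph{Strategy.} Denote by $\mathcal{U}^{(i)}$ the subspace of ${\rm M}_n(K)$ displayed on the right-hand side, that is, the block matrices whose $(s,t)$ block is an arbitrary element of ${\rm M}_{n_s\times n_t}(K)$ whenever $t-s\geq i$ and is zero otherwise. The plan is to first settle the case $i=1$, namely $\mathcal{C}_{\A}=\mathcal{U}^{(1)}$, and then to deduce the general case from the block-multiplication identity $\mathcal{U}^{(a)}\mathcal{U}^{(b)}=\mathcal{U}^{(a+b)}$ (valid for $a+b\leq q-1$) by induction on $i$, using $\mathcal{C}_{\A}^{i}=\mathcal{C}_{\A}^{i-1}\mathcal{C}_{\A}$.

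\emph{The case $i=1$.} For $\mathcal{C}_{\A}\subseteq\mathcal{U}^{(1)}$ we argue as in Remark~\ref{typicaldq}: since each diagonal block $\A_{ii}$ is commutative, every commutator $[X,Y]$ with $X,Y\in\A$ has vanishing diagonal blocks and hence lies in $\mathcal{U}^{(1)}$; as $\mathcal{U}^{(1)}$ is a two-sided ideal of $\A$ (the product of a block upper triangular matrix and a strictly block upper triangular one, in either order, is strictly block upper triangular), the ideal $\mathcal{C}_{\A}$ generated by all commutators is contained in $\mathcal{U}^{(1)}$. For the reverse inclusion we would realize each single off-diagonal block as a commutator. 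Fix $s<t$ and $C\in {\rm M}_{n_s\times n_t}(K)$, let $X$ be the matrix with $C$ in block $(s,t)$ and zeros elsewhere, and let $F_t$ be the diagonal idempotent with $I_{n_t}$ in block $(t,t)$ and zeros in all other blocks. Both lie in $\A$, since the off-diagonal blocks of $\A$ are full and a maximal commutative subalgebra of ${\rm M}_{n_t}(K)$ contains the scalars, so $I_{n_t}\in\A_{tt}$. A direct computation gives $XF_t=X$ and $F_tX=0$, whence $X=[X,F_t]\in\mathcal{C}_{\A}$. Such matrices span $\mathcal{U}^{(1)}$, so $\mathcal{U}^{(1)}\subseteq\mathcal{C}_{\A}$ and equality holds.

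\emph{The multiplication identity and induction.} The inclusion $\mathcal{U}^{(a)}\mathcal{U}^{(b)}\subseteq\mathcal{U}^{(a+b)}$ is immediate from block multiplication: the $(s,t)$ block of a product $UW$ is $\sum_u U_{su}W_{ut}$, and a summand can be nonzero only when $u-s\geq a$ and $t-u\geq b$, forcing $t-s\geq a+b$. The reverse inclusion is the substantive point. Given $s<t$ with $t-s\geq a+b$ and arbitrary $C\in {\rm M}_{n_s\times n_t}(K)$, we would choose the intermediate index $u=s+a$ (so that $u-s\geq a$, $t-u\geq b$ and $s<u<t$) and write $C$ as a sum of products of an $(s,u)$ block with a $(u,t)$ block: decomposing $C=\sum_{p,r}C_{pr}e_pe_r^{T}$ and factoring each rank-one term through the first standard basis vector of $K^{n_u}$ exhibits $C\in\mathcal{U}^{(a)}\mathcal{U}^{(b)}$, using only $n_u\geq 1$. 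Thus $\mathcal{U}^{(a)}\mathcal{U}^{(b)}=\mathcal{U}^{(a+b)}$, and inducting through $\mathcal{C}_{\A}^{i}=\mathcal{C}_{\A}^{i-1}\mathcal{C}_{\A}=\mathcal{U}^{(i-1)}\mathcal{U}^{(1)}=\mathcal{U}^{(i)}$ yields the claim for $i=1,\dots,q-1$. We expect the main obstacle to be exactly this reverse inclusion: one must verify that the full off-diagonal blocks in the far positions are genuinely obtained as \emph{sums} of products, even when an intermediate block ${\rm M}_{n_u}(K)$ is small; this is where the fullness of the off-diagonal blocks and the rank-one factorisation are essential, the rest being routine block bookkeeping.
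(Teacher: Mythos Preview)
Your proof is correct and follows essentially the same approach as the paper: for $i=1$ both arguments realize a single off-diagonal block as a commutator with a block-diagonal idempotent (the paper uses the idempotent at the \emph{source} index $j$ and computes $[E_j,X]=X$, whereas you use the idempotent at the \emph{target} index $t$ and compute $[X,F_t]=X$, which is the same trick). For higher $i$ the paper simply declares the form of $\mathcal{C}_{\A}^{i}$ ``evident'' once $\mathcal{C}_{\A}=\mathcal{U}^{(1)}$ is known, while you spell out the identity $\mathcal{U}^{(a)}\mathcal{U}^{(b)}=\mathcal{U}^{(a+b)}$ via rank-one factorisation; this is extra detail rather than a different route.
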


\begin{proof}
We start with $i = 1$. Keeping in mind that the diagonal blocks of  
a D$_q$ subalgebra of~{\rm M}$_n(K)$ of type~$(n_1, n_2, \ldots, n_q)$ with max-comm db's 
are commutative, the inclusion 
$$ \mathcal{C}_{\mathcal{A}} \subseteq 
\left(
\begin{array}{ccccc}
0_{n_1} & {\rm M}_{n_1\times n_2}(K) & {\rm M}_{n_1\times n_3}(K) & \cdots      & {\rm M}_{n_1\times n_q}(K)\\ 
       \\
        & 0_{n_2}              & {\rm M}_{n_2\times n_3}(K) & \cdots      & {\rm M}_{n_2\times n_q}(K)  \\  \\
        \\
        &                      & \ddots               & \ddots      & \vdots                \\ 
        \\
        &                      &                      & \ddots      & {\rm M}_{n_{q-1}\times n_q}(K)\\  
        \\
        &                      &                      &             & 0_{n_q}           
\end{array}
\right)
$$ 
is immediate. 

In order to show the converse inclusion (for $i=1$), let $j$ be any positive integer such that $j<q$, and take arbitrary matrices $X_{j,j+1} \in {\rm M}_{n_j \times n_{j+1}}(K), \ X_{j, j+2} \in {\rm M}_{n_j \times n_{j+2}}(K), \ \ldots \ , \ X_{jq} \in {\rm M}_{n_j \times n_q}(K)$. Then
$$\left( 
\begin{array}{c|c|c}
 &        & \\ \hline
 & I_{n_j} & \\ \hline 
 &        &
\end{array}
\right) \qquad {\rm and} \qquad 
\left( 
\begin{array}{c|c|ccc}
 &          &           &        &    \\ \hline
 & 0_{n_j}  & X_{j, j+1} & \ldots & X_{jq} \\ \hline 
 &          &           &         &
\end{array}
\right)
$$
are elements of every 
D$_q$ subalgebra of M$_n(K)$ of type $(n_1, n_2, \ldots, n_q)$ with max-comm db's 
and so, since 
\begin{align*}
&\left( 
\begin{array}{c|c|c}
 &        & \\ \hline
 & I_{n_j} & \\ \hline 
 &        &
\end{array}
\right)    
\left( 
\begin{array}{c|c|ccc}
 &          &           &        &    \\ \hline
 & 0_{n_j}  & X_{j, j+1} & \ldots & X_{jq} \\ \hline 
 &          &           &         &
\end{array}
\right)=
\left( 
\begin{array}{c|c|ccc}
 &    &     &        &    \\ \hline
 & 0_{n_j}  & X_{j, j+1} & \ldots & X_{jq} \\ \hline 
 &          &          &         &
\end{array}
\right), \\
&\left( 
\begin{array}{c|c|ccc}
 &    &     &        &    \\ \hline
 & 0_{n_j}  & X_{j, j+1} & \ldots & X_{jq} \\ \hline 
 &          &          &         &
\end{array}
\right)
\left( 
\begin{array}{c|c|c}
 &        & \\ \hline
 & I_{n_j} & \\ \hline 
 &        &
\end{array}
\right)  
= 0_n,
\end{align*}
it follows that 
$$\left(
\begin{array}{c|c|ccc}
 &    &           &        &    \\ \hline
 & 0_{n_j}  & X_{j, j+1} & \ldots & X_{jq} \\ \hline 
 &    &          &         &
\end{array}
\right)=
\left[
\left( 
\begin{array}{c|c|c}
 &        & \\ \hline
 & I_{n_j} & \\ \hline 
 &        &
\end{array}
\right),
\left(
\begin{array}{c|c|ccc}
 &    &           &        &    \\ \hline
 & 0_{n_j}  & X_{j, j+1} & \ldots & X_{jq} \\ \hline 
 &    &          &         &
\end{array}
\right)
\right] \in\mathcal{C}_{\A}.
$$
As $j$ and the matrices $X_{j, j+1}$, $X_{j, j+2}$, \ldots, $X_{jq}$ were arbitrary, the mentioned inclusion has been established.

The form of the ideal $\mathcal{C}_{\A}^{i}$, for $i=2,\ldots,q-1$, is now evident.
\end{proof}

With the help of Proposition \ref{proposition power of ideal generated by commutators} 
  we are ready to prove the 
first of the main results of this section.
Henceforth, $e_{ij}$ denotes the matrix unit which has 1 in position $(i, j)$ and zeroes elsewhere. 

\begin{theorem} \label{typical are maximal}
Let $\mathcal{A}$ be a D$_q$
subalgebra of~{\rm M}$_n(K)$ of type $(n_1, n_2, \ldots, n_q)$ with max-comm db's. 
Then $\mathcal{A}$ is a maximal D$_q$ subalgebra of M$_n(K)$.
\end{theorem}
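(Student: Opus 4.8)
The plan is to show directly that $\A$ admits no proper D$_q$ overalgebra: suppose $\B$ is a D$_q$ subalgebra of M$_n(K)$ with $\A\subseteq\B$, and prove $\B=\A$. Two facts drive everything. First, writing $\mathcal{N}$ for the strictly upper block triangular part of M$_n(K)$ of type $(n_1,\ldots,n_q)$ (the matrices supported on the blocks $(i,j)$ with $i<j$), Proposition \ref{proposition power of ideal generated by commutators} in the case $i=1$ gives $\C_{\A}=\mathcal{N}$; since $\A\subseteq\B$ forces $\C_{\A}\subseteq\C_{\B}$, we obtain $\mathcal{N}\subseteq\C_{\B}$, so in particular every matrix unit $e_{st}$ with $\mathrm{block}(s)<\mathrm{block}(t)$ lies in $\C_{\B}$. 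Second, as $\B$ is D$_q$, Proposition \ref{proposition identities with nilpotent ideal} gives $\C_{\B}^{\,q}=0$.

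The technical heart is the \emph{key claim}: no element of $\C_{\B}$ has a nonzero entry on the block diagonal. To prove it I would argue by contradiction: suppose $G\in\C_{\B}$ has $G_{\mu\nu}\neq0$ with $\mu,\nu$ in the same block $i$. I would then surround $G$ by matrix units drawn from $\mathcal{N}\subseteq\C_{\B}$, namely a telescoping left product $e_{sc_1}e_{c_1c_2}\cdots e_{c_{i-2}\mu}=e_{s\mu}$ of $i-1$ factors climbing from a chosen index $s$ in block $1$ up to $\mu$ in block $i$, and a right product $e_{\nu d_1}\cdots e_{d_{q-i-1}t}=e_{\nu t}$ of $q-i$ factors from $\nu$ in block $i$ to a chosen $t$ in block $q$. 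Since $e_{s\mu}\,G\,e_{\nu t}=G_{\mu\nu}\,e_{st}$, the resulting product of $(i-1)+1+(q-i)=q$ elements of $\C_{\B}$ equals the nonzero matrix $G_{\mu\nu}\,e_{st}$; when $i=1$ or $i=q$ one routes on one side only, and the product is still nonzero because it retains $G_{\mu\nu}$ in some entry. This contradicts $\C_{\B}^{\,q}=0$ and establishes the claim.

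With the claim in hand I would finish as follows. Take any $B\in\B$; its strictly upper block triangular part lies in $\A$ by the definition of type, so after subtracting it we may assume $B=D+L$ with $D$ block diagonal and $L$ strictly lower block triangular. If $L\neq0$, choose a nonzero entry $b_{\alpha\beta}$ with $\mathrm{block}(\alpha)=a>b=\mathrm{block}(\beta)$; then $e_{\beta\alpha}\in\mathcal{N}\subseteq\B$ and a short computation shows $G:=[e_{\beta\alpha},B]\in\C_{\B}$ has $(\beta,\beta)$-entry $b_{\alpha\beta}\neq0$, a nonzero block-diagonal entry contradicting the claim; hence $L=0$ and $B=D$. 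For each $i$ the block $D_{(i,i)}$ must commute with every $X_{ii}\in\A_{ii}$, for otherwise, embedding such an $X_{ii}$ as the block-diagonal $X\in\A$ with $X_{ii}$ in slot $(i,i)$, the commutator $[X,D]\in\C_{\B}$ would be block diagonal with nonzero $(i,i)$-block $[X_{ii},D_{(i,i)}]$, again contradicting the claim. Since the maximal commutative algebra $\A_{ii}$ equals its own centralizer in M$_{n_i}(K)$, it follows that $D_{(i,i)}\in\A_{ii}$ for every $i$, whence $B=D\in\A$. Thus $\B=\A$ and $\A$ is a maximal D$_q$ subalgebra.

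The main obstacle is the key claim, that is, the bookkeeping that converts a single block-diagonal entry of $\C_{\B}$ into a product of \emph{exactly} $q$ elements of $\C_{\B}$. The delicate points are getting the factor count to be precisely $q$ (using that the flag has exactly $q$ steps, so climbing from block $1$ to block $q$ consumes $q-1$ matrix units) and handling the boundary blocks $i=1$ and $i=q$ where one side of the routing is empty. Everything else is routine once the two structural facts $\C_{\A}=\mathcal{N}\subseteq\C_{\B}$ and $\C_{\B}^{\,q}=0$ are available.
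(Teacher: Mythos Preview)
Your proof is correct and follows essentially the same route as the paper's: both arguments rest on $\C_{\A}=\mathcal{N}\subseteq\C_{\B}$ (Proposition \ref{proposition power of ideal generated by commutators}) and $\C_{\B}^{\,q}=0$ (Proposition \ref{proposition identities with nilpotent ideal}), and both obtain a contradiction by sandwiching a bad element of $\C_{\B}$ between matrix units from $\mathcal{N}$ so as to produce a nonzero product of $q$ elements of $\C_{\B}$. The only organizational difference is that the paper treats a nonzero block in position $(r,s)$ with $s\leq r$ in one stroke (the sandwich lands in $\C_{\B}^{\,q+(r-s)}=0$), whereas you isolate the diagonal case $r=s$ as a ``key claim'' and then reduce the strictly lower case to it via the commutator $[e_{\beta\alpha},B]$; this is a harmless repackaging of the same idea.
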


\begin{proof}
Suppose, for the contrary, that a D$_q$ subalgebra $\A$ of type $(n_1, n_2, \ldots, n_q)$ with max-comm db's  $\A_{ii}$, $i = 1, 2, \ldots, q$ (using the notation (\ref{block triang})), is not a maximal D$_q$ subalgebras of M$_n(K)$. The block structure (\ref{block triang}) of the algebra $\A$ will be 
essential to the proof. 

Let $\mathcal{B}$ be a maximal D$_q$ subalgebra of M$_n(K)$ properly containing $\mathcal{A}$. 
So we can find a matrix $X \in \mathcal{B} \setminus \A$. Write it in the block form

\begin{equation}\label{Xij}
\left(
\begin{array}{ccc}
X_{11} & \ldots & X_{1q} \\
\vdots & \ddots & \vdots \\
X_{q1}  & \ldots & X_{qq}
\end{array}
\right),
\end{equation}
where $X_{ij} \in {\rm M}_{n_i \times n_j}(K)$, $1 \leq i, j \leq q$. Note that the numbers $n_{i}$ are 
the same as those in the definition of the type of subalgebra $\A$.
Subalgebra $\A$ 
contains idempotents $E_1 = \sum_{i=1}^{n_1} e_{ii}$ and $E_{j} = \sum_{i = N_{j-1}+1}^{N_j} e_{ii}$ for $j =2, 3, \ldots, q$, where $N_{j} = n_1+n_2+\cdots+n_j$.
So they also belong to $\mathcal{B}$. It follows that for all indices $1 \leq i, j \leq q$, the matrices $E_i X E_j$ are in $\B$. 
These matrices in the form~$(\ref{Xij})$ have $X_{ij}$ in their $i$-th row and $j$-th column, and 
$0$ everywhere else. 
We conclude that there exists a matrix $Z$ in the ideal $\mathcal{C}_\mathcal{B}$ generated by all the commutators of $\mathcal{B}$, such that, written in the block form analogous to $(\ref{Xij})$, has exactly one nonzero block $Z_{rs}$, where $r$ and $s$ satisfy 
$1 \leq s \leq r \leq q$.

From the definition of $\A$ follows that $X \not \in \A$ if and only if either there exists $i>j$ such that $X_{ij}$ is a nonzero matrix or there exists $k \in \{1, 2, \ldots, q\}$ such that $X_{kk} \not \in \A_{kk}$. 

In the first case, for matrix $Z$ we can take $E_{ii} X E_{jj}$. This matrix belongs to $\C_\B$, because $E_{ii} X E_{jj} = [E_{ii}, E_{ii} X E_{jj}]$. 
In the other case, $X_{kk} \not \in \A_{kk}$. Since $\A_{kk}$ is a maximal commutative subalgebra, there exists a matrix $Y_{kk} \in \mathcal{A}_{kk}$ such that the commutator  $[X_{kk}, Y_{kk}]$ is nonzero. Let $Z$ be defined as follows:
\begin{align*}
Z =
\left[
\left(
\begin{array}{ccccc}
\ddots &             &         &             & \\
       & 0_{n_{k-1}} &         &             & \\
       &             & X_{kk}  &             & \\
       &             &         & 0_{n_{k+1}} & \\   
       &             &         &             &  \ddots
\end{array}
\right),
\left(
\begin{array}{ccccc}
\ddots &             &         &             & \\
       & 0_{n_{k-1}} &         &             & \\
       &             & Y_{kk} &              & \\
       &             &         & 0_{n_{k+1}} & \\   
       &             &         &             &  \ddots
\end{array}
\right)
\right].
\end{align*}
Then $Z \in \mathcal{C}_\mathcal{B}$, with the only nonzero $Z_{rs} \in {\rm M}_{n_r \times n_s}(K), 1\leq s \leq r \leq q$, in the form analogous to (\ref{Xij}), exists. Let $z_{ij}$, $1 \leq i \leq n_r$, $1 \leq j \leq n_s$, be a nonzero entry of the matrix $Z_{rs}$. Note that in matrix $Z$ it is entry $(N_{r-1}+i,N_{s-1}+j)$, where $N_{r-1} = n_1+n_2+\ldots+n_{r-1}$ and $N_{s-1} = n_1+n_2+\ldots+n_{s-1}$ (with $N_0 := 0$). Assume firstly that $r \not \in \{1,q \} $. Then 
$$
e_{1,N_{r-1}+i}\cdot Z \cdot e_{N_{s-1}+j,n} = z_{ij}e_{1n}\ne 0.
$$ 	
By Proposition \ref{proposition power of ideal generated by commutators},  $e_{1,N_{r-1}+i} \in (\mathcal{C}_\mathcal{A})^{r-1}$ and $e_{N_{s-1}+j,n} \in (\mathcal{C}_\mathcal{A})^{q-s}$, where $\mathcal{C}_\mathcal{A}$ is the ideal generated by all commutators $[x,y], \ x,y \in \mathcal{A}$. Since $\mathcal{B}$ contains $\A$,  we have that $\mathcal{C}_\mathcal{B}$ also contains $\mathcal{C}_\mathcal{A}$, and so
$$
z_{ij}e_{ij} \in (\mathcal{C}_\mathcal{B})^{r-1} \cdot \mathcal{C}_\mathcal{B} \cdot (\mathcal{C}_\mathcal{B})^{q-s}=(\mathcal{C}_\mathcal{B})^{q+(r-s)}= \{0\}.
$$
The above equality holds because $r-s\geq 0$ and by Proposition \ref{proposition identities with nilpotent ideal}, $\mathcal{C}_\mathcal{B}^q=\{0\}$. It is a contradiction, since $z_{ij}\ne 0$. 

When $r = 1$, then $Z_{11}$ is the nonzero block of matrix $Z$. By Proposition \ref{proposition power of ideal generated by commutators}, $e_{jn} \in \mathcal{C}_\mathcal{A}^{q-1} \subseteq \mathcal{C}_\mathcal{B}^{q-1}$, and so the product $Z \cdot e_{jn}$ in $\mathcal{C}_{\mathcal{B}} \cdot \mathcal{C}_\mathcal{B}^{q-1} = \mathcal{C}_\mathcal{B}^{q}$ is zero. 
However, $Z \cdot e_{jn}$ has as its $n$-th column the $j$-th column of $Z$, the latter column being nonzero, which is a contradiction.

Finally, if $r = q$, then $Z_{qs}$ is the nonzero block of the matrix $Z$. In this case the first row of the matrix $e_{1, N_{q-1}+i} \cdot Z$, where $N_{q-1} = n_1+n_2+ \cdots+ n_{q-1}$, is nonzero. Similarly, it leads to a contradiction, which completes the proof.   
\end{proof}
Note that conjugation of a maximal D$_q$ subalgebra of M$_n(K)$ is still a maximal D$_q$ subalgebra of M$_n(K)$, and so, by Theorem \ref{typical are maximal} and Theorem \ref{corollary block}, we have the following: 

\begin{corollary} \label{maximal Dq iff block Dq}
An algebra  $\mathcal{A}$  is a maximal D$_q$ subalgebra of M$_n(K)$ if and only if it is conjugated with 
a D$_q$ subalgebra of~{\rm M}$_n(K)$ of some type $(n_1, n_2, \ldots, n_q)$ with max-comm db's.
\end{corollary}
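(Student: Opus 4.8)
The plan is to read this corollary as the formal synthesis of the two principal structure theorems already established, so that essentially no new argument is needed: the content is contained entirely in Theorem \ref{corollary block} and Theorem \ref{typical are maximal}, together with the observation that conjugation preserves both the D$_q$ identity and maximality. I would therefore prove the two implications separately and keep the argument short.

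For the forward implication I would invoke Theorem \ref{corollary block} directly. If $\A$ is a maximal D$_q$ subalgebra of M$_n(K)$, that theorem supplies an invertible $X \in {\rm M}_n(K)$ for which $X^{-1}\A X$ is a D$_q$ subalgebra of M$_n(K)$ of some type $(n_1, n_2, \ldots, n_q)$ with max-comm db's. This is precisely the assertion that $\A$ is conjugated with such an algebra, so this direction is immediate.

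For the converse, suppose $\A$ is conjugated with a D$_q$ subalgebra $\B$ of M$_n(K)$ of some type $(n_1, n_2, \ldots, n_q)$ with max-comm db's, say $\A = Y^{-1}\B Y$ for an invertible matrix $Y$. Theorem \ref{typical are maximal} guarantees that $\B$ itself is a maximal D$_q$ subalgebra of M$_n(K)$. It then remains only to transport maximality across the conjugation, which is the point recorded in the sentence immediately preceding the corollary.

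I expect the only step requiring a word of care to be this invariance of maximality under conjugation, and it is routine rather than a genuine obstacle. Conjugation by an invertible matrix is an algebra automorphism of M$_n(K)$, hence it preserves the polynomial identity defining D$_q$ as well as the inclusion relation among subalgebras. Concretely, were $\A$ properly contained in some D$_q$ subalgebra $\C$ of M$_n(K)$, then $\B = Y \A Y^{-1}$ would be properly contained in the D$_q$ subalgebra $Y\C Y^{-1}$, contradicting the maximality of $\B$ furnished by Theorem \ref{typical are maximal}. Hence $\A$ is maximal, completing the converse and the proof.
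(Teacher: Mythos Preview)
Your proof is correct and follows exactly the paper's approach: the corollary is recorded immediately after Theorem \ref{typical are maximal} with the remark that conjugation preserves maximality, so that Theorems \ref{corollary block} and \ref{typical are maximal} together yield both implications. Your explicit verification that conjugation preserves maximality (via the automorphism of M$_n(K)$) is precisely what the paper abbreviates in the sentence preceding the corollary.
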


In the rest of this section we will examine conjugations, which satisfy some additional properties, of a D$_q$ subalgebra of M$_n(K)$ of some type with max-comm db's. We need the following result involving some matrix equations:

\begin{lemma} \label{lemma after discussion}
Let $r$, $s$ and $t$  be positive integers, and let $Y \in {\rm M}_{r \times t}(K)$, $W \in {\rm M}_{s \times t} (K)$, with $W \ne 0_{s \times t}$.
If  $YZW = 0_{r \times t}$  for all $Z \in {\rm M}_{t \times s}(K)$, then $Y=0_{r \times t}$. 
\end{lemma}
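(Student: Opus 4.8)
The plan is to exploit the bilinearity of the map $Z \mapsto YZW$ by testing it on matrix units, thereby reducing the hypothesis to a family of rank-one (outer-product) conditions that each isolate a single column of $Y$.

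First I would observe that, since $W \ne 0_{s \times t}$, some row of $W$ is nonzero: fix an index $k$, $1 \le k \le s$, such that the $k$-th row of $W$ is a nonzero element of ${\rm M}_{1 \times t}(K)$, and let $\ell$ be a column index with $W_{k\ell} \ne 0$.

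Next, for an arbitrary $m$ with $1 \le m \le t$, I would substitute the matrix unit $Z = e_{mk} \in {\rm M}_{t \times s}(K)$ into the hypothesis. A direct computation shows that $Y e_{mk}$ is the $r \times s$ matrix whose $k$-th column is the $m$-th column of $Y$ and whose remaining columns vanish; consequently $Y e_{mk} W$ is the outer product of the $m$-th column of $Y$ with the $k$-th row of $W$, i.e. its $(i,j)$ entry equals $Y_{im} W_{kj}$. The hypothesis $YZW = 0_{r \times t}$ then yields $Y_{im} W_{kj} = 0$ for all $i$ and $j$.

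Finally, reading off the entries in column $j = \ell$, where $W_{k\ell} \ne 0$, forces $Y_{im} = 0$ for every $i$; since $m$ was arbitrary this gives $Y = 0_{r \times t}$, as required. There is no substantial obstacle here—the only point demanding care is the index bookkeeping for the non-square matrix units (that $e_{mk}$ lives in ${\rm M}_{t \times s}(K)$ with $1 \le m \le t$ and $1 \le k \le s$, matching the shapes of $Y$ and $W$), so that the products $Y e_{mk}$ and $Y e_{mk} W$ are well defined and evaluate to the claimed outer product.
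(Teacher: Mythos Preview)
Your proof is correct and follows essentially the same argument as the paper: both select a nonzero entry of $W$, substitute matrix units $e_{mk}\in{\rm M}_{t\times s}(K)$ for $Z$, and read off from the resulting outer-product computation that each column of $Y$ vanishes. The only difference is notation.
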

\begin{proof}
Write $Y = 
\left(
\begin{array}{ccc}
y_{11} & \ldots & y_{1t} \\
\vdots & \ddots & \vdots \\
y_{r1} & \ldots & y_{rt} 
\end{array}
\right), \  
W = 
\left(
\begin{array}{ccc}
w_{11} & \ldots & w_{1t} \\
\vdots & \ddots & \vdots \\
w_{s1} & \ldots & w_{st} 
\end{array}
\right),
$
with (say) $w_{ij}\ne 0$ (for some indices $i, j$, with $1 \leq i \leq s$, $1 \leq j \leq t$). Consider the matrix unit $e_{ki}\in{\rm M}_{t\times s}(K)$ for any fixed~$k, \ 1\le k\le t$. 

By assumption and direct calculation, we have 
\begin{equation}
0_{r \times t} =Y e_{ki} W =
\left(
\begin{array}{ccccc}
y_{1k}w_{i1} & \ldots & y_{1k} w_{ij} & \ldots & y_{1k} w_{it}  \\
y_{2k}w_{i1} & \ldots & y_{2k} w_{ij} & \ldots & y_{2k} w_{it}  \\
\vdots       & \ddots & \vdots        & \ddots & \vdots \\
y_{rk}w_{i1} & \ldots & y_{rk} w_{ij} & \ldots & y_{rk} w_{it} 
\end{array}
\right),
\end{equation}
and  so, since $w_{ij} \neq 0$, we conclude that 
$$y_{1k} =y_{2k} = \ldots = y_{rk} =0,$$ i.e., the $k$-th column of $Y$ is zero.
As $k$ was arbitrary, we conclude that $Y=0_{r \times t}.$
\end{proof}
Since ${\rm det}(X)={\rm det}(X_{11}) \cdot {\rm det} (X_{22})$ if $X$ is a block triangular matrix
$\left(
\begin{array}{cc}
X_{11}              & X_{12} \\
0_{n_2 \times n_1}  & X_{22}
\end{array}
\right),
$
where $X_{11}\in {\rm M}_{n_1}(K), \ X_{22}\in {\rm M}_{n_2}(K), \ X_{12} \in {\rm M}_{n_1 \times n_2}(K)$, with $n_1$ and $n_2$ positive integers, 
it is evident that $X_{11}$ and $X_{22}$ are invertible if $X$ is invertible, and direct matrix multiplication yields 

\begin{equation} \label{inverse of block triangular}
X^{-1} = 
\left(
\begin{array}{cc}
X_{11}^{-1}         & -X_{11}^{-1} X_{12} X_{22}^{-1} \\
0_{n_2 \times n_1}  & X_{22}^{-1}
\end{array}
\right).
\end{equation}


\begin{lemma} \label{block triangular conjugation}
Let $q,n_1, n_2, \ldots, n_q$  and $n$ be positive integers such that $n_1+n_2+\cdots+n_q=n$, and let $\mathcal{A}$ be a subalgebra 
of~M$_n(K)$. If 
$$
\left(
\begin{array}{ccccc}
0_{n_1} & {\rm M}_{n_1\times n_2}(K) & {\rm M}_{n_1\times n_3}(K) & \cdots      & {\rm M}_{n_1\times n_q}(K)\\ 
       \\
        & 0_{n_2}              & {\rm M}_{n_2\times n_3}(K) & \cdots      & {\rm M}_{n_2\times n_q}(K)  \\  \\
        \\
        &                      & \ddots               & \ddots      & \vdots                \\ 
        \\
        &                      &                      & \ddots      & {\rm M}_{n_{q-1}\times n_q}(K)\\  
        \\
        &                      &                      &             & 0_{n_q}           
\end{array}
\right)\subseteq \A
$$ 
and $X \in {\rm M}_n(K)$ is an invertible matrix such that $X^{-1} \mathcal{A} X$ is contained in
the full subalgebra of~M$_n(K)$ of type $(n_1, n_2, \ldots, n_q)$ (see Definition \ref{of type}), then matrix $X$ 
also belongs to the full subalgebra of~M$_n(K)$ of type $(n_1, n_2, \ldots, n_q)$.
\end{lemma}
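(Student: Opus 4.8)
The plan is to reinterpret the full subalgebra of type $(n_1,\ldots,n_q)$ as the stabilizer of a flag and to show that the invertible matrix $X$ must preserve that flag. Set $V=K^n$, let $N_i=n_1+\cdots+n_i$ for $1\le i\le q$ (so $N_q=n$), and consider the flag $V_i=\Span(e_1,\ldots,e_{N_i})$, with $V_0=0$. The full subalgebra $U$ of type $(n_1,\ldots,n_q)$ is then exactly $\{M\in{\rm M}_n(K): M V_i\subseteq V_i \text{ for all } i\}$, while the strictly upper block triangular set displayed in the hypothesis --- call it $U^+$ --- is $\{N: N V_i\subseteq V_{i-1}\text{ for all } i\}$. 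The two hypotheses say precisely that $U^+\subseteq\mathcal{A}$ and $X^{-1}\mathcal{A}X\subseteq U$, and I will only ever use the weaker consequence $X^{-1}U^+X\subseteq U$; in particular the algebra structure of $\mathcal{A}$ plays no role.

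The reduction step is as follows. Fix $N\in U^+\subseteq\mathcal{A}$. By hypothesis $X^{-1}NX\in U$, so $(X^{-1}NX)V_i\subseteq V_i$, and multiplying on the left by $X$ gives $N(XV_i)\subseteq XV_i$. Hence, writing $W_i:=XV_i$ (a subspace of dimension $N_i$, since $X$ is invertible), each $W_i$ is invariant under every element of $U^+$. The theorem will follow once I show that $V_i$ is the \emph{only} $U^+$-invariant subspace of dimension $N_i$: then $W_i=XV_i=V_i$ for every $i$, so $X$ stabilizes the whole flag and therefore $X\in U$.

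So the crux is the claim: if $W\subseteq V$ is invariant under all of $U^+$ and $\dim W=N_i$ for some $1\le i\le q-1$, then $W=V_i$. I would prove $W\subseteq V_i$ and conclude by a dimension count. Suppose instead $W\not\subseteq V_i$ and pick $w\in W\setminus V_i$; writing $w=\sum_k w_k$ with $w_k$ the component of $w$ in block $k$, the largest index $k_0$ with $w_{k_0}\ne 0$ satisfies $k_0\ge i+1$. For each $\ell<k_0$, the matrices $N\in U^+$ supported only in block position $(\ell,k_0)$ send $w$ to $Bw_{k_0}$ placed in block $\ell$, where $B\in{\rm M}_{n_\ell\times n_{k_0}}(K)$ is arbitrary; since $w_{k_0}\ne 0$, the map $B\mapsto Bw_{k_0}$ is surjective onto block $\ell$, so by invariance $W$ contains all of block $\ell$. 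Letting $\ell$ range over $1,\ldots,k_0-1$ gives $V_{k_0-1}\subseteq W$; together with $w\in W\setminus V_{k_0-1}$ this forces $\dim W\ge N_{k_0-1}+1\ge N_i+1$, contradicting $\dim W=N_i$. Hence $W\subseteq V_i$, and so $W=V_i$.

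The only nontrivial ingredient is the surjectivity of $B\mapsto Bw_{k_0}$ for a fixed nonzero vector $w_{k_0}$, which is elementary and is essentially the same phenomenon exploited in Lemma~\ref{lemma after discussion}; so I expect the main (but still modest) work to be the bookkeeping in the uniqueness claim of the third step. An alternative, more computational route would induct on $q$, peeling off the first diagonal block and using the inverse formula~\eqref{inverse of block triangular} together with Lemma~\ref{lemma after discussion} to kill the lower-left blocks of $X$ one strip at a time; I find the flag argument cleaner because it never requires manipulating $X^{-1}$ explicitly.
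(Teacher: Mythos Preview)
Your proof is correct and takes a genuinely different route from the paper's. The paper proceeds by induction on $q$: it writes $X$ and $X^{-1}$ in block form, conjugates a matrix supported only in block $(1,j)$ to produce relations of the form $X'_{s1}Y_{1j}X_{j1}=0$ for all $Y_{1j}$, and then invokes Lemma~\ref{lemma after discussion} to conclude $X'_{s1}=0$ for $s\ge 2$; the inverse formula~\eqref{inverse of block triangular} then gives $X_{s1}=0$, after which the inductive hypothesis applied to the lower-right $(n-n_1)\times(n-n_1)$ block finishes the argument. This is exactly the ``alternative, more computational route'' you describe in your last paragraph.

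Your flag argument is cleaner: it identifies the full subalgebra of type $(n_1,\ldots,n_q)$ with the stabilizer of the standard flag and reduces the lemma to the single structural fact that $V_i$ is the unique $U^+$-invariant subspace of dimension $N_i$. The advantages are that you never need to manipulate $X^{-1}$ explicitly and there is no induction; the paper's approach, on the other hand, stays entirely at the level of block matrix identities and reuses the already-proved Lemma~\ref{lemma after discussion} rather than introducing a new invariance statement. Both arguments ultimately rest on the same elementary observation (surjectivity of $B\mapsto Bv$ for a nonzero vector $v$), but package it differently.
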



\begin{proof}
The result that we want to prove is obvious if $q=1$. Thus, building a proof using mathematical induction, we start with $q =2 $, and positive integers $n_1, n_2$ and $n$ such that $n_1+n_2 = n$.
Write matrix $X$ in block form 
$
\left(
\begin{array}{cc}
X_{11}             & X_{12} \\
X_{21}             & X_{22}
\end{array}
\right),
$
where $X_{ij} \in {\rm M}_{n_i \times n_{j}}(K)$, $1 \leq i, j \leq 2$,  
and suppose, for the contrary, that block $X_{21} \neq 0_{n_2 \times n_1}$. Write also the inverse matrix $X^{-1}$ in block form 
$
\left(
\begin{array}{cc}
X'_{11}             & X'_{12} \\
X'_{21}             & X'_{22}
\end{array}
\right),
$ 
where $X'_{ij} \in {\rm M}_{n_i \times n_j}(K)$.
Then, by assumption, the matrix
$
\left(
\begin{array}{cc}
0_{n_1}        & Y \\
               & 0_{n_2}
\end{array}
\right)
$
belongs to $\mathcal{A}$ for every $Y \in {\rm M}_{n_1 \times n_2}(K)$. Therefore,
\begin{align*}
X^{-1} 
\left(
\begin{array}{cc}
0_{n_1}   & Y \\    
          & 0_{n_2}
\end{array}
\right) 
X &= 
\left(
\begin{array}{cc}
X'_{11}   & X'_{12} \\ 
X'_{21}   & X'_{22}
\end{array}
\right)
\left(
\begin{array}{cc}
0_{n_1}   & Y \\    
          & 0_{n_2}
\end{array}
\right) 
\left(
\begin{array}{cc}
X_{11}   & X_{12} \\ 
X_{21}   & X_{22}
\end{array}
\right)
\\
&=
\left(
\begin{array}{cc}
X'_{11} Y X_{21}    & X'_{11} Y X_{22}  \\ 
X'_{21} Y X_{21}    & X'_{21} Y X_{22}
\end{array}
\right)
\in \left(
\begin{array}{cc}
{\rm M}_{n_1}(K)          & {\rm M}_{n_1 \times n_2}(K) \\ 
0_{n_2 \times n_1}  & {\rm M}_{n_2}(K)
\end{array}
\right).
\end{align*}
Hence, $X_{21}'YX_{21} = 0_{n_2 \times n_1}$ for every $Y \in$ M$_{n_1 \times n_2}(K)$, and so Lemma \ref{lemma after discussion} implies that $X'_{21}= 0_{n_2 \times n_1}$. We conclude from formula (\ref{inverse of block triangular}) that $X_{21} = 0_{n_2 \times n_1}$. This is a contradiction, which completes the desired result for $q = 2$.

Assume now inductively that the result holds for some $q\ge2$, and let $n$ and $n_1, n_2, \ldots, n_{q+1}$ be positive integers  such that $n_1+n_2+ \cdots+ n_{q+1} = n$. 
Write matrix $X$ and its inverse in block form
$$
X  =
\left(
\begin{array}{ccc}
X_{11} & \ldots & X_{1,q+1} \\
\vdots & \ddots & \vdots \\
X_{q+1,1} & \ldots & X_{q+1,q+1} 
\end{array}
\right), 
\quad
X^{-1} =
\left(
\begin{array}{ccc}
X'_{11}    & \ldots    & X'_{1,q+1} \\
\vdots    & \ddots    & \vdots \\
X'_{q+1,1} & \ldots & X'_{q+1,q+1} 
\end{array}
\right),
$$
with $X_{ij}, X'_{ij} \in {\rm M}_{n_i \times n_j}(K)$ for all $1\le i,j\le q+1$. Firstly, we will show that $X_{j1} = 0_{n_j \times n_1}$ for $j=2,3,\ldots,q+1$. Then we will use the inductive assumption. Suppose, for the contrary, that $X_{j1} \neq 0_{n_j \times n_1}$ for some $j$, $2 \leq j \leq q+1$. Let $Y_{1j} \in {\rm M}_{n_1 \times n_j}(K)$ be an arbitrary matrix. Then 
$$
\left(
\begin{array}{cccccc}
 & \ldots & 0_{n_{1} \times n_{j-1}} & Y_{1j}               & 0_{n_1 \times n_{j+1}} & \ldots \\
 & \ldots & 0_{n_2 \times n_{j-1}}   & 0_{n_2 \times n_{j}} & 0_{n_2 \times n_{j+1}}  & \ldots \\
 &        & \vdots                   & \vdots               & \vdots             
\end{array}
\right) \in \mathcal{A},
$$
and direct calculation gives 
$$
X^{-1}
\left(
\begin{array}{cccccc}
 & \ldots & 0_{n_{1} \times n_{j-1}} & Y_{1j}               & 0_{n_1 \times n_{j+1}} & \ldots \\
 & \ldots & 0_{n_2 \times n_{j-1}}   & 0_{n_2 \times n_{j}} & 0_{n_2 \times n_{j+1}}  & \ldots \\
 &        & \vdots                   & \vdots               & \vdots             
\end{array}
\right)
X
=
\left(
\begin{array}{ccc}
X'_{11}Y_{1j}X_{j1} & X'_{11}Y_{j1}X_{j2} & \ldots \\
X'_{21}Y_{1j}X_{j1} & X'_{21}Y_{j1}X_{j2} & \ldots \\
\vdots              & \vdots              &  
\end{array}
\right).
$$
Since the algebra $X^{-1}\A X$ is contained in the full subalgebra of M$_n(K)$ of type $(n_1, n_2, \ldots, n_q)$,
we obtain the equalities 
$$
X'_{21}Y_{1j}X_{j1} = 0_{n_2 \times n_{1}}, \quad X'_{31} Y_{1j}X_{j1} = 0_{n_{3} \times n_1}, \quad \ldots\quad, X'_{q+1,1} Y_{1j}X_{j1} = 0_{n_{q+1} \times n_1}.
$$
We now apply Lemma \ref{lemma after discussion} to each of these equations and obtain that $X'_{21}, X'_{31}, \ldots, X'_{q+1,1}$ are zero matrices. Therefore we can write $X^{-1} =
\left(
\begin{array}{cc}
X'_{11}               & \overline{X'}_{12} \\
0_{N_2 \times n_1}   & \overline{X'}_{22}
\end{array}
\right), 
$
where $N_2 = n_2+n_3+\cdots+n_{q+1}$ and
$$
\overline{X'}_{12}=
\left(
\begin{array}{cccc}
X'_{12} & X'_{13} & \ldots & X'_{1,q+1}
\end{array}
\right),
\quad
\overline{X'}_{22}=
\left(
\begin{array}{ccc}
X'_{22} & \ldots & X'_{2,q+1} \\
\vdots & \ddots & \vdots \\
X'_{q+1,2} & \ldots & X'_{q+1,q+1} 
\end{array}
\right).
$$
From formula (\ref{inverse of block triangular}) on the inverse of a block triangular matrix it follows that  
$$X_{21} = 0_{n_2 \times n_1}, \quad X_{31} = 0_{n_3 \times n_1}, \quad \ldots, \quad   X_{q+1,1} = 0_{n_{q+1} \times n_1}.$$ 
This is a contradiction, since $X_{j1} \neq 0_{n_j \times n_1}$ for some $2 \leq j \leq q+1$. Hence, indeed $X_{21}$, $X_{31}$, \ldots, $X_{q+1,1}$ are zero matrices. Now we can use the inductive assumption to the subalgebra of ${\rm M}_{N_2}(K)$ obtained from the entries of $\mathcal{A}$ starting from row $n_1+1$ and column $n_1+1$. It implies that matrix~$X$ 
belongs to the full subalgebra of M$_n(K)$ of type $(n_1, n_2, \ldots, n_q)$.
\end{proof}

Although conjugation of 
a block triangular D$_q$ subalgebra $\mathcal{A}$ of M$_n(K)$ of some type $(n_1, n_2, \ldots, n_q)$
with an invertible matrix $X\in {\rm M}_n(K)$ can result in the subalgebra $X^{-1}\mathcal{A}X$ of M$_n(K)$ not being a block triangular subalgebra of M$_n(K)$, as shown in the example below, we will prove 
in~Proposition~\ref{conjugation block triangular} that this does not happen if $X$ is such $X^{-1}\mathcal{A}X$ is contained in the 
full subalgebra of M$_n(K)$ of type $(n_1, n_2, \ldots, n_q)$. 

\begin{example}
Let  $\mathcal{A}$ be a 
D$_4$ subalgebra of M$_n(K)$ of type $(n_1, n_2, n_3, n_4)$, possibly with max-comm db's, and consider the invertible block matrix 
\[
 X=\left(
 \begin{array}{cccc}
 0_{n_1 \times n_4} & 0_{n_1 \times n_2} & 0_{n_1 \times n_3} & I_{n_1} \\
 0_{n_2 \times n_4} & I_{n_2}            & 0_{n_2 \times n_3} & 0_{n_2 \times n_1} \\
 0_{n_3 \times n_4} & 0_{n_3 \times n_2} & I_{n_3}            & 0_{n_3 \times n_1} \\
I_{n_4}             & 0_{n_4 \times n_2} & 0_{n_4 \times n_3} & 0_{n_4 \times n_1}
\end{array}
\right)\in {\rm M}_n(K),
\]
\noindent which clearly is a "block" version of the permutation matrix
\[
\left(
 \begin{array}{cccc}
 0 & 0 & 0 & 1 \\
 0 & 1 & 0 & 0 \\
 0 & 0 & 1 & 0 \\
 1 & 0 & 0 & 0
\end{array}
\right).
\]
\noindent Then
\[
X^{-1} = 
 \left(
 \begin{array}{cccc}
 0_{n_4 \times n_1} & 0_{n_4 \times n_2} & 0_{n_4 \times n_3} & I_{n_4} \\
 0_{n_2 \times n_1} & I_{n_2}            & 0_{n_2 \times n_3} & 0_{n_2 \times n_4} \\
 0_{n_3 \times n_1} & 0_{n_3 \times n_2} & I_{n_3}            & 0_{n_3 \times n_4} \\
I_{n_1}             & 0_{n_1 \times n_2} & 0_{n_1 \times n_3} & 0_{n_1 \times n_4}
\end{array}
\right).
\]
\noindent Writing the algebra $\mathcal{A}$ in block triangular form

 \[
 \left(
 \begin{array}{cccc}
 \mathcal{A}_{11} & \mathcal{A}_{12} & \mathcal{A}_{13} & \mathcal{A}_{14} \\
                  & \mathcal{A}_{22} & \mathcal{A}_{23} & \mathcal{A}_{24} \\
                  &                  & \mathcal{A}_{33} & \mathcal{A}_{34} \\
                  &                  &                  & \mathcal{A}_{44}
 \end{array}
 \right),
\]
as in (\ref{block triang}), direct verification yields
\[
 X^{-1} \mathcal{A} X =
 \left(
 \begin{array}{cccc}
 \mathcal{A}_{44} & 0_{n_4 \times n_2} & 0_{n_4 \times n_3} & 0_{n_4 \times n_1} \\
 \mathcal{A}_{24} & \mathcal{A}_{22}   & \mathcal{A}_{23}   & 0_{n_2 \times n_1} \\
 \mathcal{A}_{34} & 0_{n_3 \times n_2} & \mathcal{A}_{33}   & 0_{n_3 \times n_1} \\
 \mathcal{A}_{14} & \mathcal{A}_{12}   & \mathcal{A}_{13}   & \mathcal{A}_{11}
 \end{array}
 \right),
\]
implying that, by Definition \ref{of type}, $X^{-1} \mathcal{A} X$ is not 
a subalgebra of M$_n(K)$ 
of any type $(\ell_1, \ell_2, \ell_3, \ell_4)$. 
\end{example}

\begin{proposition} \label{conjugation block triangular}
Let $\mathcal{A}$ be a 
{\rm D}$_q$ subalgebra of~{\rm M}$_n(K)$ of type $(n_1, n_2, \ldots, n_q)$ with max-comm db's, 
and let $X \in {\rm M}_n(K)$ be an invertible matrix such that $X^{-1}\mathcal{A}X$ is contained in the 
full subalgebra of M$_n(K)$ of type~$(n_1, n_2, \ldots, n_q)$.
Then $X^{-1}\mathcal{A}X$ 
is also a
{\rm D}$_q$ subalgebra of~{\rm M}$_n(K)$ of type~$(n_1, n_2, \ldots, n_q)$ with max-comm db's.
Moreover, if we write $\A_{ii}$ and $\A'_{ii}$, $i = 1, 2, \ldots, q,$ for the diagonal blocks of the subalgebras $\A$ and $X^{-1} \A X$ (see \ref{block triang}), respectively, then $\A_{ii}$ and 
 $\A'_{ii}$ are conjugates.
\end{proposition}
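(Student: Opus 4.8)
The plan is to first pin down the shape of the conjugating matrix $X$, then read off the diagonal blocks, and finally check that the full off-diagonal blocks and the whole block structure survive the conjugation. Throughout, let $\mathcal{N}$ denote the strictly upper block triangular subalgebra displayed in the hypothesis (all blocks ${\rm M}_{n_i\times n_j}(K)$ for $i<j$, zeros on and below the diagonal).

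First I would invoke Lemma \ref{block triangular conjugation}. Since $\mathcal{A}$ is of type $(n_1,\ldots,n_q)$ with max-comm db's, it contains $\mathcal{N}$, so the hypothesis of Lemma \ref{block triangular conjugation} is met; together with the assumption that $X^{-1}\mathcal{A}X$ lies in the full subalgebra of type $(n_1,\ldots,n_q)$, this forces $X$ itself to lie in the full subalgebra of type $(n_1,\ldots,n_q)$, i.e.\ $X$ is block upper triangular with diagonal blocks $X_{11},\ldots,X_{qq}$. Because the determinant of a block triangular matrix is the product of the determinants of its diagonal blocks, each $X_{ii}$ is invertible, and by applying (\ref{inverse of block triangular}) inductively, $X^{-1}$ is again block upper triangular with diagonal blocks $X_{ii}^{-1}$.

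Next I would compute the diagonal blocks. For any block upper triangular $P,Q$ one has $(PQ)_{ii}=P_{ii}Q_{ii}$, so for $A\in\mathcal{A}$ the $(i,i)$ block of $X^{-1}AX$ equals $X_{ii}^{-1}A_{ii}X_{ii}$. As $A\mapsto X^{-1}AX$ is a bijection of $\mathcal{A}$ onto $X^{-1}\mathcal{A}X$ and $\overline{\mathcal{A}}_{ii}=\mathcal{A}_{ii}$ (every element of $\mathcal{A}_{ii}$ occurs as an $(i,i)$ block of some matrix in $\mathcal{A}$), the diagonal block of $X^{-1}\mathcal{A}X$ is exactly $\mathcal{A}'_{ii}=X_{ii}^{-1}\mathcal{A}_{ii}X_{ii}$. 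This is precisely the \emph{Moreover} claim, and since conjugation by the invertible $X_{ii}$ is an automorphism of ${\rm M}_{n_i}(K)$, it carries the maximal commutative subalgebra $\mathcal{A}_{ii}$ onto a maximal commutative subalgebra $\mathcal{A}'_{ii}$.

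Then I would recover the full off-diagonal blocks and the type structure. A product of block upper triangular matrices with at least one strictly block upper triangular factor is strictly block upper triangular, so $X^{-1}\mathcal{N}X\subseteq\mathcal{N}$; applying the same to $X^{-1}$ yields $X^{-1}\mathcal{N}X=\mathcal{N}$, hence $\mathcal{N}=X^{-1}\mathcal{N}X\subseteq X^{-1}\mathcal{A}X$. Combined with $X^{-1}\mathcal{A}X$ sitting inside the full subalgebra of type $(n_1,\ldots,n_q)$, each off-diagonal block of $X^{-1}\mathcal{A}X$ equals ${\rm M}_{n_i\times n_j}(K)$. To get the independence required by Definition \ref{of type}, I would show the block idempotents $E_i$ lie in $X^{-1}\mathcal{A}X$: since $E_i\in\mathcal{A}$, the matrix $X^{-1}E_iX$ is block upper triangular with $(i,i)$ block $I_{n_i}$ and all other diagonal blocks zero, and subtracting its strictly upper part (which lies in $\mathcal{N}\subseteq X^{-1}\mathcal{A}X$) leaves exactly $E_i$. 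With $E_i\in X^{-1}\mathcal{A}X$, for any $B\in X^{-1}\mathcal{A}X$ the matrix $E_iBE_i$ isolates $B_{ii}$, so the diagonal-block sets of (\ref{block}) are contained in $X^{-1}\mathcal{A}X$. Thus $X^{-1}\mathcal{A}X$ is of type $(n_1,\ldots,n_q)$ with max-comm db's, and it is D$_q$ by Remark \ref{typicaldq}.

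The only genuinely non-formal step is pinning $X$ down to block triangular form, which is exactly Lemma \ref{block triangular conjugation} (itself resting on Lemma \ref{lemma after discussion}); everything afterwards is bookkeeping with block upper triangular multiplication. The point requiring a little care is checking that the full off-diagonal blocks and the block idempotents are preserved, and this is precisely where the containment $\mathcal{N}\subseteq X^{-1}\mathcal{A}X$ does the work.
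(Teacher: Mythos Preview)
Your proof is correct, and it takes a somewhat different route from the paper's. After the common first step of invoking Lemma~\ref{block triangular conjugation} to force $X$ (and hence $X^{-1}$) into block upper triangular form with invertible diagonal blocks $X_{ii}$, the paper establishes the equality $X^{-1}\mathcal{A}X=\bigl(\begin{smallmatrix}X_{11}^{-1}\mathcal{A}_{11}X_{11}&\ast\\&\ddots\end{smallmatrix}\bigr)$ by an explicit block-by-block computation: it first shows the $(1,q)$ block of $X^{-1}\mathcal{A}X$ is all of ${\rm M}_{n_1\times n_q}(K)$ by direct conjugation, then inductively works backwards along the first row to the $(1,q-1)$ block, the $(1,q-2)$ block, and so on, eventually recovering every block independently. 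You instead argue structurally: the strictly upper block triangular ideal $\mathcal{N}$ satisfies $X^{-1}\mathcal{N}X=\mathcal{N}$ in one stroke (since block-upper-triangular times strictly-upper is strictly-upper, and the same holds with $X$ replaced by $X^{-1}$), which immediately gives all off-diagonal blocks; then you recover the block idempotents $E_i$ by noting $X^{-1}E_iX-E_i\in\mathcal{N}\subseteq X^{-1}\mathcal{A}X$, and from $E_i\in X^{-1}\mathcal{A}X$ the independence of the diagonal blocks follows via $E_iBE_i$. Your approach is shorter and more conceptual; the paper's is more hands-on but makes the mechanism of how each individual block is recovered completely explicit.
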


\begin{proof}
By Lemma \ref{block triangular conjugation}, $X$ is in the full subalgebra of M$_n(K)$ of type~$(n_1, n_2, \ldots, n_q)$, and so 
$$
X=\left(
\begin{array}{ccc}
X_{11} & \ldots & X_{1q} \\
       & \ddots & \vdots \\
       &        & X_{qq}
\end{array}
\right),
$$
for some $X_{ij} \in {\rm M}_{n_i \times n_j}(K)$, $1 \leq i \leq j \leq q$. Write the inverse $X^{-1}$ in block form
$$
\left(
\begin{array}{ccc}
X'_{11}  & \ldots & X'_{1q} \\
\vdots   & \ddots & \vdots \\
X'_{q1}  & \ldots & X'_{qq}
\end{array}
\right),
$$
for $X_{ij}' \in {\rm M}_{n_i \times n_j}(K), \ 1\le i,j\le q$. We will show that $X^{-1}$ is also block triangular. 

Divide $X$ into four blocks of sizes $n_1 \times n_1$, $n_1 \times (n_2+ \cdots+n_q)$, $(n_2+\cdots+n_q) \times n_1$ and $(n_2+ \cdots+n_q) \times (n_2+ \cdots+n_q)$, respectively.  Formula (\ref{inverse of block triangular}) implies that $X'_{11} = X_{11}^{-1}$ and that the matrices $X'_{21}, X'_{31}, \ldots, X'_{q1}$ are all zero matrices. These facts lead us to finding the inverse of the block triangular matrix
$$
\left(
\begin{array}{ccc}
X_{22} & \ldots & X_{2q} \\
       & \ddots & \vdots \\
       &        & X_{qq}
\end{array}
\right).
$$
Continuing in the above way, we finally get that $X^{-1}$ is block triangular with $X'_{ii} =X_{ii}^{-1}$ for every~$i, \ i=1,2,\ldots,q.$

Now write 
the 
D$_q$ subalgebra $\A$ of M$_n(K)$ with max-comm db's 
in the form 
$$
\left(
\begin{array}{ccc}
\mathcal{A}_{11} & \ldots & \mathcal{A}_{1q} \\
                 & \ddots & \vdots \\
                 &        & \mathcal{A}_{qq}
\end{array}
\right),    
$$
as in (\ref{block triang}). 
Since the matrices $X$ and $X^{-1}$ are block triangular, it follows that $X^{-1} \A X$ is contained in the  
D$_q$~subalgebra 
\begin{equation} \label{block conjugated}
\left(
\begin{array}{ccccc}
X_{11}^{-1} \A_{11}X_{11} & {\rm M}_{n_1\times n_2}(K)   & {\rm M}_{n_1\times n_3}(K) & \cdots    & {\rm M}_{n_1\times n_q}(K)\\ 
        \\
                           & X_{22}^{-1} \A_{22}X_{22}   & {\rm M}_{n_2\times n_3}(K) & \cdots    & {\rm M}_{n_2\times n_q}(K)  \\  \\
           \\
                            &                            & \ddots               & \ddots      & \vdots                \\ 
           \\
                            &                            &                      & \ddots      & {\rm M}_{n_{q-1}\times n_q}(K)\\  
           \\
                            &                            &                      &             & X_{qq}^{-1} \A_{qq} X_{qq}           
\end{array}
\right)
\end{equation}
of M$_n(K)$ of type $(n_1, n_2, \ldots, n_q)$ with max-comm db's.
We will prove that the reverse inclusion also holds.

We first show that
\begin{equation} \label{1q block}
\left(
\begin{array}{ccc}
\ldots               & 0_{n_{n_1 \times q-1}}  & {\rm M}_{n_1 \times n_q}(K) \\
\ldots               & 0_{n_2 \times n_{q-1}}  & 0_{n_2 \times n_q}         \\
                     & \vdots                  & \vdots 
\end{array}
\right)
\subseteq X^{-1} \A X.
\end{equation}
To this end, keep in mind that every $X_{ii}, \ i=1,2,\ldots,q,$ is invertible, and note that for an arbitrary matrix $Y_{1q} \in {\rm M}_{n_1 \times n_q}(K)$,
$$
\left(
\begin{array}{ccc}
\ldots & 0_{n_1 \times n_{q-1}} & Y_{1q} \\
\ldots & 0_{n_2 \times n_{q-1}} & 0_{n_2 \times n_q}  \\
       & \vdots                 & \vdots 
\end{array}
\right) 
=
X^{-1}
\left(
\begin{array}{ccc}
\ldots & 0_{n_1 \times n_{q-1}} & X_{11}Y_{1q}X^{-1}_{qq} \\
\ldots & 0_{n_2 \times n_{q-1}} & 0_{n_2 \times n_q}  \\
       & \vdots                 & \vdots 
\end{array}
\right)
X
 \in X^{-1} \A X,
$$
which establishes (\ref{1q block}).
Next take an arbitrary matrix $Y_{1,q-1} \in {\rm M}_{n_1 \times n_{q-1}}(K)$. Then 
\begin{align*}
X^{-1}
&\left(
\begin{array}{cccc}
\ldots & 0_{n_1 \times n_{q-2}}  & X_{11}Y_{1,q-1}X^{-1}_{q-1,q-1} & 0_{n_1 \times n_q} \\
\ldots & 0_{n_2 \times n_{q-2}}  & 0_{n_2 \times n_{q-1}} & 0_{n_2 \times n_q}  \\
       & \vdots                 & \vdots                  &  \vdots
\end{array}
\right)
X = 
\\
=&
\left(
\begin{array}{cccc}
\ldots & 0_{n_1 \times n_{q-2}}  & Y_{1,q-1} & Z_{1,q} \\
\ldots & 0_{n_2 \times n_{q-2}}  & 0_{n_2 \times n_{q-1}} & 0_{n_2 \times n_q}  \\
       & \vdots                 & \vdots                  & \vdots
\end{array}
\right) \in X^{-1} \A X
\end{align*}
for some $Z_{1,q-1}\in {\rm M}_{n_1\times n_q}(K)$.
Because of the inclusion in $(\ref{1q block})$, we deduce that 
$$
\left(
\begin{array}{cccc}
\ldots & 0_{n_1 \times n_{q-2}}  & Y_{1,q-1} & 0_{n_1 \times n_q} \\
\ldots & 0_{n_2 \times n_{q-2}}  & 0_{n_2 \times n_{q-1}}          & 0_{n_2 \times n_q}  \\
       & \vdots                  & \vdots                          & \vdots
\end{array}
\right)\in X^{-1} \A X.
$$
Therefore, $X^{-1} \A X$ contains 
$$
\left(
\begin{array}{cccc}
\ldots               & 0_{n_{n_1 \times q-2}}  & {\rm M}_{n_1 \times n_{q-1}}(K) & 0_{n_1 \times n_{q}} \\
\ldots               & 0_{n_2 \times n_{q-2}}  & 0_{n_2 \times n_{q-1}}          & 0_{n_2 \times n_{q}} \\
                     & \vdots                  & \vdots                          & \vdots 
\end{array}
\right),
$$
Continuing in this way we obtain the inclusion
$$
\left(
\begin{array}{ccccc}
X_{11}^{-1} \A_{11} X_{11} & {\rm M}_{n_1 \times n_2}(K) & {\rm M}_{n_1 \times n_3}(K) & \ldots & {\rm M}_{n_1 \times n_q}(K) \\
                 &  0_{n_2}                    & 0_{n_2 \times n_3}        & \ldots &  0_{n_1 \times n_q}    \\
                 &                             & 0_{n_3}                   & \ldots &  0_{n_3 \times n_q}    \\
                 &                             &                           & \ddots &  \vdots                \\
                 &                             &                           &        &  0_{n_q}               \\
\end{array}                
\right) \subseteq X^{-1} \A X
$$
Proceeding in the same manner we obtain that
$$
\left(
\begin{array}{ccccc}
0_{n_1}          &  0_{n_1 \times n_2}         & 0_{n_1 \times n_3}          & \ldots & 0_{n_1 \times n_q} \\
                 &  X_{22}^{-1} \A_{22} X_{22} & {\rm M}_{n_2 \times n_3}(K) & \ldots & {\rm M}_{n_2 \times n_q}(K)    \\
                 &                             & 0_{n_3}                     & \ldots &  0_{n_3 \times n_q}    \\
                 &                             &                             & \ddots &  \vdots                \\
                 &                             &                             &        &  0_{n_q}               \\
\end{array}                
\right) \subseteq X^{-1} \A X.
$$
The pattern is now clear, and so finally we conclude that
$X^{-1}\A X$ contains the entire subalgebra of~M$_n(K)$ in (\ref{block conjugated}), which completes the proof.
\end{proof}

\begin{corollary}\label{upper triangular max}
Let $\mathcal{A}$ be a subalgebra of {\rm U}$_n(K)$.
\begin{enumerate}
\item 
If $\mathcal{A}$ is a maximal D$_q$ subalgebra of {\rm M}$_n(K)$, then $\mathcal{A}$ is a D$_q$ subalgebra of {\rm M}$_n(K)$ of some 
type~$(n_1,n_2,\ldots, n_q)$ with max-comm db's.

\item If $\mathcal{A}$ is a D$_q$ subalgebra of 
{\rm M}$_n(K)$ with maximum dimension, then $\mathcal{A}$ is a max-dim D$_q$ subalgebra of {\rm M}$_n(K)$ of some 
type~$(n_1,n_2,\ldots, n_q)$ with max-dim db's.

\end{enumerate}
\end{corollary}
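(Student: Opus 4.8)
The plan is to deduce both parts from the characterization already obtained, by running Proposition~\ref{conjugation block triangular} ``in reverse''. For part (1), suppose $\A$ is a maximal D$_q$ subalgebra of ${\rm M}_n(K)$ with $\A\subseteq {\rm U}_n(K)$. By Theorem~\ref{corollary block} there is an invertible $X\in{\rm M}_n(K)$ for which $\A':=X^{-1}\A X$ is a D$_q$ subalgebra of ${\rm M}_n(K)$ of some type $(n_1,\ldots,n_q)$ with max-comm db's. Writing $W:=X^{-1}$, we have $W^{-1}\A' W=X\A' X^{-1}=\A$. The crucial observation is that ${\rm U}_n(K)$ is contained in the full subalgebra of ${\rm M}_n(K)$ of type $(n_1,\ldots,n_q)$: the latter is block upper triangular, so it contains every matrix whose nonzero entries lie on or above the main diagonal, and in particular it contains ${\rm U}_n(K)$. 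Consequently $W^{-1}\A' W=\A\subseteq{\rm U}_n(K)$ is contained in that full subalgebra, and Proposition~\ref{conjugation block triangular} (applied to $\A'$ with conjugating matrix $W$) yields that $\A=W^{-1}\A' W$ is itself a D$_q$ subalgebra of ${\rm M}_n(K)$ of type $(n_1,\ldots,n_q)$ with max-comm db's. This proves (1), with no further conjugation required.

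For part (2), I would first note that a D$_q$ subalgebra of maximum dimension is automatically maximal, since a proper D$_q$ overalgebra would have strictly larger dimension. Thus (1) applies and $\A$ is a D$_q$ subalgebra of type $(n_1,\ldots,n_q)$ with max-comm db's, say with diagonal blocks $\A_{ii}$. To upgrade ``max-comm'' to ``max-dim'', I would replace each $\A_{ii}$ by a commutative subalgebra $C_{ii}$ of ${\rm M}_{n_i}(K)$ of maximum dimension $\lfloor n_i^2/4\rfloor+1$; by Remark~\ref{typicaldq} the resulting block triangular algebra $\B$ of type $(n_1,\ldots,n_q)$ is again D$_q$, and $\dim_K\B=\sum_i(\lfloor n_i^2/4\rfloor+1)+\sum_{i<j}n_in_j\ge\dim_K\A$, with equality precisely when every $\A_{ii}$ already has dimension $\lfloor n_i^2/4\rfloor+1$. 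Since $\A$ has maximum dimension among all D$_q$ subalgebras of ${\rm M}_n(K)$, we must have $\dim_K\B\le\dim_K\A$, forcing equality; hence each $\A_{ii}$ is a maximum-dimensional commutative subalgebra, i.e.\ $\A$ has max-dim db's. As $\A$ was assumed to have maximum dimension, it is then a max-dim D$_q$ subalgebra of type $(n_1,\ldots,n_q)$ with max-dim db's, as required.

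The step I expect to need the most care is the direction bookkeeping in part (1): Proposition~\ref{conjugation block triangular} is stated for a conjugation $\mathcal{A}\mapsto X^{-1}\mathcal{A}X$ whose image lands inside the relevant full subalgebra, so one must apply it to $\A'$ with the \emph{inverse} matrix $W=X^{-1}$ rather than naively to $\A$, and must verify the containment hypothesis $\A\subseteq{\rm U}_n(K)\subseteq$ (full subalgebra of type $(n_1,\ldots,n_q)$). Everything else is routine: the inclusion ${\rm U}_n(K)\subseteq$ (full subalgebra of type $(n_1,\ldots,n_q)$) is immediate from the support comparison, and the dimension count in part (2) is a direct consequence of Remark~\ref{typicaldq} together with the definition of maximum dimension.
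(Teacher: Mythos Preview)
Your proof is correct and follows essentially the same route as the paper's: for (1) you apply Theorem~\ref{corollary block} and then Proposition~\ref{conjugation block triangular} with the inverse conjugating matrix (the paper writes this as $\A=X(X^{-1}\A X)X^{-1}$, which is your $W=X^{-1}$), and for (2) you use (1) together with the replacement argument on the diagonal blocks. Your version is in fact slightly more explicit than the paper's, since you spell out the ``maximum dimension $\Rightarrow$ maximal'' step needed to invoke (1), and you justify the containment ${\rm U}_n(K)\subseteq$ (full subalgebra of type $(n_1,\ldots,n_q)$) that the paper leaves implicit.
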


\begin{proof}
(1) By Theorem \ref{corollary block}, there is an invertible matrix $X$ such that $X^{-1}\A X$ is a D$_q$ subalgebra of M$_n(K)$ of some 
type~$(n_1,n_2,\ldots, n_q)$ with max-comm db's. The desired result now follows immediately from Proposition \ref{conjugation block triangular}, since $\A= X(X^{-1}\A X)X^{-1}$. 

(2) By (1), $\mathcal{A}$ is a D$_q$ subalgebra of M$_n(K)$ of some 
type~$(n_1,n_2,\ldots, n_q)$ with max-comm db's. 
Had any of the commutative algebras in the diagonal blocks of $\A$ not been of maximum dimension, we would have been able to replace it by a commutative algebra with larger dimension, thereby obtaining a D$_q$ subalgebra of M$_n(K)$ with dimension larger than that of $\A$; a contradiction.
\end{proof}

The construction of a D$_q$ subalgebra of M$_n(K)$ with maximum dimension described in Theorem~\ref{precise},
 combined with the examples in (\ref{typical commutative 1}) and in (\ref{typical commutative 2}) of commutative subalgebras of M$_n(K)$
 with maximum dimension, gives an example of a D$_q$ subalgebra of M$_n(K)$ with maximum dimension contained in U$_n(K)$. So if $\A$ is a D$_q$ subalgebra of U$_n(K)$ with maximum dimension, then $\A$ is a D$_q$ subalgebra of M$_n(K)$ with maximum dimension.  
Consequently, Corollary~\ref{upper triangular max}(2) confirms the ``underlying conjecture" embodied in Question \ref{question typical} by answering the question in the negative. 

We conclude the section with a characterization of when D$_q$ subalgebras 
of~M$_n(K)$ with max-comm db's are conjugated.

\begin{theorem} \label{max up to conj}
Let $\A$ and $\B$ be {\rm D}$_q$ subalgebras of~{\rm M}$_n(K)$ of types $(n_1, n_2, \ldots, n_q)$ and $(\ell_1, \ell_2, \ldots, \ell_q)$, respectively, with max-comm db's. Write $\A_{ii}$ and $\B_{ii}$, $i = 1, 2, \ldots, q,$ for the diagonal blocks of the subalgebras $\A$ and $\B$, respectively (see \ref{block triang}). Then $\A$ and $\B$ are conjugates if and only if the $q$-tuples $(n_1, n_2, \ldots, n_q)$ and $(\ell_1, \ell_2, \ldots, \ell_q)$ are equal and, for every $i, \ i =1, 2, \ldots, q, \ \A_{ii}$ and 
 $\B_{ii}$ are conjugates.
\end{theorem}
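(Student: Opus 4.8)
The plan is to prove the two implications separately, dispatching the sufficiency direction quickly and then spending most of the effort on necessity, where the delicate point is extracting the equality of the two $q$-tuples before any structural result can be applied.

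For sufficiency, suppose $(n_1, \ldots, n_q) = (\ell_1, \ldots, \ell_q)$ and, for each $i$, pick an invertible $P_i \in {\rm M}_{n_i}(K)$ with $P_i^{-1}\A_{ii}P_i = \B_{ii}$. I would form the block-diagonal matrix $X = \operatorname{diag}(P_1, \ldots, P_q)$ and verify directly that $X^{-1}\A X = \B$. Indeed, conjugation by $X$ sends the $(i,j)$-block $M_{ij}$ to $P_i^{-1}M_{ij}P_j$; this fixes each full off-diagonal block ${\rm M}_{n_i\times n_j}(K)$ for $i<j$ (since $P_i,P_j$ are invertible, the whole space of $n_i\times n_j$ matrices is mapped onto itself), carries each diagonal block $\A_{ii}$ to $\B_{ii}$ by choice of $P_i$, and keeps the strictly lower blocks zero. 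This is a routine computation requiring no further input.

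For necessity, assume $X^{-1}\A X = \B$ with $X$ invertible. The first and main step is to show the two tuples coincide, and here I would exploit the conjugation-invariant dimensions anticipated in Remark \ref{maxtuple}. Since conjugation is an algebra isomorphism, it carries the commutator ideal to the commutator ideal, so $\C_\B = X^{-1}\C_\A X$ and hence $\C_\B^{\,i} = X^{-1}\C_\A^{\,i}X$ for every $i$. Writing $V = K^n$ and using $XV = V$, one gets $\C_\B^{\,i}V = X^{-1}\C_\A^{\,i}V$, so $\dim_K \C_\B^{\,i}V = \dim_K \C_\A^{\,i}V$. Now Proposition \ref{proposition power of ideal generated by commutators} gives the explicit block shape of $\C_\A^{\,i}$: its only nonzero blocks are the full blocks in positions $(a,b)$ with $b-a\ge i$, so these blocks lie exactly in the first $q-i$ block-rows, and one reads off $\dim_K \C_\A^{\,i}V = n_1 + \cdots + n_{q-i}$. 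The analogous formula holds for $\B$ with the $\ell_j$. Equating these for $i = 1, \ldots, q-1$, together with $n_1+\cdots+n_q = n = \ell_1+\cdots+\ell_q$, yields all partial sums, and taking successive differences forces $(n_1, \ldots, n_q) = (\ell_1, \ldots, \ell_q)$.

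Once the tuples are known to be equal, the conclusion about the diagonal blocks is essentially handed to us by an earlier result. Since $\B$ is of type $(\ell_1, \ldots, \ell_q) = (n_1, \ldots, n_q)$, it is contained in the full subalgebra of ${\rm M}_n(K)$ of type $(n_1, \ldots, n_q)$, and therefore so is $X^{-1}\A X = \B$. This is precisely the hypothesis of Proposition \ref{conjugation block triangular}, which then guarantees that $X^{-1}\A X$ is again a D$_q$ subalgebra of type $(n_1, \ldots, n_q)$ with max-comm db's and that its diagonal blocks are conjugate to the corresponding $\A_{ii}$. As the diagonal blocks of $X^{-1}\A X = \B$ are the $\B_{ii}$, we conclude that $\A_{ii}$ and $\B_{ii}$ are conjugate for each $i$. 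I expect the real obstacle to be exactly this logical ordering: Proposition \ref{conjugation block triangular} applies only after we know $X^{-1}\A X$ lands in the full subalgebra of the \emph{correct} type, which hinges on the tuple-equality proved via the invariants $\dim_K \C_\A^{\,i}V$; everything downstream of that is then immediate.
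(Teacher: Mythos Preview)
Your proof is correct and follows essentially the same approach as the paper: both directions are handled identically, with the tuple equality extracted from the conjugation-invariant dimensions $\dim_K \C_\A^{\,i}V$ computed via Proposition~\ref{proposition power of ideal generated by commutators}, and the diagonal-block conjugacy then read off from Proposition~\ref{conjugation block triangular}. Your verification that $\dim_K \C_\B^{\,i}V = \dim_K \C_\A^{\,i}V$ via $\C_\B^{\,i}V = X^{-1}\C_\A^{\,i}V$ is in fact a bit cleaner than the paper's explicit basis argument, and your direct block computation for sufficiency avoids the paper's appeal to Proposition~\ref{conjugation block triangular} there, but these are minor stylistic differences rather than genuinely different routes.
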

\begin{proof}
Firstly, assume that $\A$ and $\B$ are {\rm D}$_q$ subalgebras of~{\rm M}$_n(K)$ of the same type $(n_1, n_2, \ldots, n_q)$ with max-comm db's such that for every $j, \ j=1, 2, \ldots, q$, the diagonal blocks $\A_{jj}$ and $\B_{jj}$  are conjugates. Then there are invertible matrices $X_{jj} \in {\rm M}_{n_j}(K)$ such that $X_{jj}^{-1}\A_{jj}X_{jj} = \B_{jj}$. 
We will show that
\begin{equation} \label{isomorphis and conjugation}
\left(
\begin{array}{cccc}
X^{-1}_{11} &        &        & \\
       & X^{-1}_{22} &        & \\
       &        & \ddots & \\
       &        &        & X^{-1}_{qq}
\end{array}
\right)
\A
\left(
\begin{array}{cccc}
X_{11} &        &        & \\
       & X_{22} &        & \\
       &        & \ddots & \\
       &        &        & X_{qq}
\end{array}
\right)
=
\B.
\end{equation}
Note that, by Proposition \ref{conjugation block triangular}, the subalgebra on the left hand side in (\ref{isomorphis and conjugation}) is a 
{\rm D}$_q$~subalgebra of~{\rm M}$_n(K)$ of type $(n_1, n_2, \ldots, n_q)$ with max-comm db's. It is easy to check that this subalgebra has $X_{jj}^{-1} \A_{jj} X_{jj}$ as its diagonal blocks, which establishes the equality in (\ref{isomorphis and conjugation}).

Conversely, assume that the algebras $\A$ and $\B$ are conjugated. If we can show that tuples the  $q$-tuples $(n_1, n_2, \ldots, n_q)$ and $(\ell_1, \ell_2, \ldots, \ell_q)$ are equal, then the result follows directly from 
Proposition~\ref{conjugation block triangular}. To this end, first note that, by Proposition \ref{proposition power of ideal generated by commutators}, 
$$n_1+n_2+ \cdots +n_i = {\rm dim}_K \C_\A^{q-i}V$$
for $i =1, 2, \ldots, q-1$, where $V = K^n$ and $\C_\A$ is the ideal of $\A$ generated by all $[x,y], \ x,y \in \A$. 
Since $\sum_{j=1}^q n_j =n$, we can write
$$
n_i =
\left\{
\begin{array}{ll}
{\rm dim}_K \C_\A^{q-i}V-{\rm dim}_K \C_\A^{q-i+1}V,& \text{ if } i =1, 2, \ldots, q-1 \\
n-{\rm dim}_K \C_\A V                                 ,& \text{ if } i = q. 
\end{array}
\right.
$$
Hence the type of the subalgebra $\A$ of  M$_n(K)$ is determined by $\dim_K \mathcal{\C}_{\A}^{i}V$ for $i=1, 2, \ldots, q-1$. Similarly, the type of the subalgebra $\B$ is determined by $\dim_K \C_{\B}^i V$, where $\C_\B$ is the ideal of $\B$ generated by all $[x,y], \ x,y \in \B$. 

By assumption, there exists an invertible matrix $X \in {\rm M}_n(K)$ such that $X^{-1}
\A X = \B$,
which implies that $\C_\B^i = (X^{-1} \C_\A X)^i = X^{-1} \C_\A^i X $ for all $i = 1, 2 \ldots, q-1$. To complete the proof we will show that ${\rm dim}_K \C_\A^i V= {\rm dim}_K \C_\B^i V$.

If vectors $C_1 v_1, C_2 v_2, \ldots, C_k v_k$ constitute
a basis of $\C_\A^i V$ for some matrices $C_j \in \C_\A^i$ and some vectors $v_j \in V$, then it can be shown directly that the vectors $X^{-1} C_j v_j = (X^{-1} C_j X) X^{-1}v_j, \ j =1, 2, \ldots, k,$ of the vector space $(X^{-1} \C_\A^i X) V$ are linearly independent. Therefore, ${\rm dim}_K \C_\A^i V \leq \dim_K (X^{-1} \C_\A^i X) V = \dim_K \C_\B^iV $. Similarly, we can show that  $\dim_K \C_\B^iV = \dim_K(X^{-1} \C_\A^i X) V \leq {\rm dim}_K \C_\A^i V$ by taking basis vectors of the vector space $(X^{-1} \C_\A^i X) V$ and producing linearly independent vectors in the vector space $\C_\A^iV$. 
\end{proof}

\section{Remarks on a result by Jacobson} \label{remarks on jacobson}

 In this section, we clarify the structure of commutative subalgebras  of M$_n(K)$, with $K$ 
 an algebraically closed field, 
 as discussed in \cite{Jac}.
 
 Throughout this section $K$ is an algebraically closed field, and $\A$ is a commutative subalgebra  of~M$_n(K)$ with maximum dimension. We focus in particular on the structure of $\A$ for $n\in\{2,3\}$, which, in the light of the footnote in \cite[page 436]{Jac}, does not seem to be so readily obtained after all. These considerations will be used in the subsequent sections.

By \cite{Jac}, for  $n >3$ we have the following:
\begin{itemize}
\item If $n$ is an even integer ($n = 2\ell$), then $\A$ is conjugated to 
\begin{equation}\label{commJacob0}
C^1_{2\ell}(K) := 
KI_{n}+\left(
\begin{array}{cc}

0_{\ell} & {\rm M}_\ell(K) \\
         &  0_{\ell}

\end{array}
\right).
\end{equation}
\item If $n$ is an odd integer ($n = 2 \ell +1$), then $\A$ is conjugated to 
\begin{equation}\label{commJacob1}
C^1_{2\ell+1}(K) :=
KI_{n}+\left(
\begin{array}{cc}

0_{\ell} & {\rm M}_{\ell \times (\ell+1)}(K) \\
         &  0_{\ell+1}

\end{array}
\right)
\end{equation}
or
\begin{equation}\label{commJacob2}
C^2_{2\ell+1}(K) := 
KI_{n}+\left(
\begin{array}{cc}

0_{\ell+1} & {\rm M}_{(\ell+1) \times \ell}(K) \\
         &  0_{\ell}

\end{array}
\right).
\end{equation}
\end{itemize}
It is possible to show (see Corollary \ref{isomnotconj}) that the algebras $C^1_{2\ell+1}(K)$ and $C^2_{2\ell+1}(K)$ are not conjugated. However, they are isomorphic. Indeed, it is easily verified that the map from $C^1_{2\ell+1}(K)$ to~$C^2_{2\ell+1}(K)$ which rotates the rectangular block ${\rm M}_{\ell \times (\ell+1)}(K)$ counterclockwise through $90^\circ$ is an isomorphism. 

Next, let $n\le3.$ Obviously, for $n=1$ we have 
\begin{equation}\label{fornequal1}
\A = K =: C^1_1(K).
\end{equation}

We now carefully study the two remaining cases, i.e., when  $n \in \{2, 3\}$. Either  $\A$ is isomorphic to a nontrivial product  $\A_1 \times \A_2$ of algebras, or it is not isomorphic to such a product. Suppose that we have the latter situation.
Since  $K$ is algebraically closed and $\A$ is a finite dimensional commutative algebra, it follows from
\cite[(3.5) Wedderburn-Artin Theorem]{Lam} that $$\A/J(\A) \cong K_1 \times K_2 \times \ldots \times K_t$$ 
for some $t\ge1$, with $K_i = K$ for every $i$. Since $J(\A)$ is nilpotent (see, for example, \cite[(4.12)~Theorem]{Lam}), it follows 
 from \cite[(21.28)~Theorem]{Lam} that if $t> 1$, then, lifting the idempotent $(1,0, \ldots, 0)$ of~$\A/J(\A)$, we get a nontrivial (i.e., $e \notin \{0, 1\}$) idempotent  $e \in \A$.
 Therefore, $$\A \cong e\A e \times (1-e)\A(1-e),$$ which contradicts our assumption. Consequently, 
$\A$ is local, and so by \cite[Proposition 10]{JJLM}, $\A$ is conjugated to some subalgebra $C$ of U$^*_n(K)$. For $C$ we have the following candidates:
\begin{itemize}
\item For  $n=2$ and the (commutative) algebra U$^*_2(K)$ we have only one possibility, namely
\begin{equation}\label{commJacob3}
C^1_{2}(K) = {\rm U}^*_2(K) =
KI_{2}+\left(
\begin{array}{cc}
0 & K \\
         &  0
\end{array}
\right).
\end{equation}

\item For  $n = 3$ and the algebra U$^*_3(K)$, taking an arbitrary $x \in K$, the matrix
$\left(
\begin{array}{ccc}
0 & 0 & x \\
  & 0 & 0 \\
  &   & 0
\end{array}
\right)
$
commutes with every $X\in {\rm U}^*_3(K)$. So we can see that $\left(
\begin{array}{ccc}
0 & 0 & x \\
  & 0 & 0 \\
  &   & 0
\end{array}
\right)\in C$, and consequently, 
$\left(
\begin{array}{ccc}
0 & 0 & K \\
  & 0 & 0 \\
  &   & 0
\end{array}
\right) \subseteq C
$. Since the maximal possible dimension for a commutative subalgebra of~${\rm M}_3(K)$ is  $1+\left \lfloor \frac{3^2}{4} \right  \rfloor = 3$, we deduce that there exist  $\alpha, \beta \in K$, not both equal to zero, such that $C=\left \{ \left(
\begin{array}{ccc}
a & \alpha b  & c \\
  & a        & \beta b  \\
  &          & a
\end{array}
\right) : a, b, c \in K
\right \}.
$

For   $\alpha \neq 0$ and $\beta = 0$ we get 
\begin{equation}\label{commJacob4}
C^1_{3}(K) = 
 KI_3 + \left(
\begin{array}{ccc}
0 & K        & K \\
  & 0        & 0\\
  &          & 0
\end{array}
\right).
\end{equation}

For $\alpha = 0$ and $\beta \neq 0$ we get 
\begin{equation}\label{commJacob5}
C^2_{3}(K) = 
 KI_3+ \left(
\begin{array}{ccc}
0 & 0        & K \\
  & 0        & K\\
  &          & 0
\end{array}
\right).
\end{equation}

Finally, for $\alpha \neq 0$ and $\beta \neq 0$, conjugation by 
$
 \left(
\begin{array}{ccc}
\alpha       & 0        & 0 \\
             & 1        & 0 \\
             &          & \beta^{-1}
\end{array}
\right)
$ 
gives
$$
 \left(
\begin{array}{ccc}
\alpha^{-1}  & 0        & 0 \\
             & 1        & 0 \\
             &          & \beta
\end{array}
\right)
 \left(
\begin{array}{ccc}
a & \alpha b        & c \\
  & a        & \beta b \\
  &          & a
\end{array}
\right)
 \left(
\begin{array}{ccc}
\alpha & 0        & 0 \\
  & 1        & 0\\
  &          & \beta^{-1}
\end{array}
\right)=
 \left(
\begin{array}{ccc}
a & b        & \alpha^{-1} \beta^{-1} c \\
  & a        & b \\
  &          & a
\end{array}
\right),$$ 
and so
\begin{equation}\label{commJacob6}
C^3_{3}(K) = 
KI_3 +  K \left(
\begin{array}{ccc}
0 & 1        & 0 \\
  & 0        & 1\\
  &       & 0
  \end{array}
\right) +
\left(
\begin{array}{ccc}
0 & 0        & K \\
  & 0        & 0\\
  &       & 0
  \end{array}
\right).
\end{equation}

\end{itemize}

\smallskip

Now  we consider the case where $\A$ is isomorphic to a proper product of algebras  $\A\cong \A_1 \times \A_2$. 
In this situation,  $1=e_1+ e_2$ for some nontrivial idempotents $e_1, e_2$.  Let  $\epsilon_1 \colon K^n \to K^n$ be the linear map such that $e_1$ is its matrix in the standard basis. Then as $\epsilon_1^2 = \epsilon_1$ we get
$K^n = {\rm im}(\epsilon_1) \oplus \ker(\epsilon_1)$. If  $v_1, v_2, \ldots, v_k$ form a basis for ${\rm im}(\epsilon_1)$, and  $v_{k+1}, v_{k+2}, \ldots, v_n$ form a basis for  $\ker(\epsilon_1)$ then in the basis $B = (v_1, v_2, \ldots, v_n)$ for $K^n$ we have
$ M(\epsilon_1)_{B}^{B} = 
\left(
\begin{array}{c|c}
I_k & 0_{k \times (n-k)} \\ \hline
    & 0_{n-k}
\end{array}
\right).
$
In the same basis $B$, for the map $\epsilon_2$ related to $e_2 = 1-e_1$, we have 
$M(\epsilon_2)_{B}^{B} = I_n - M(\epsilon_1)_{B}^{B} =
\left(
\begin{array}{c|c}
0_k & 0_{k \times (n-k)} \\ \hline
    & I_{n-k}
\end{array}
\right).
$

Since the idempotents  $e_1$ and $ e_2$ are orthogonal and  the algebra $\A$ is commutative, we have
$a = 1 \cdot a \cdot 1 = (e_1+e_2) a (e_1+e_2) = e_1 a e_1 + e_2 a e_2$ for every  $a \in \A$ . Therefore, as a vector space,  $\A = e_1 \A e_1 \oplus e_2 \A e_2$. Considering conjugation of $\A$ with the change-of-basis matrix from the standard basis for $K^n$ to $B$, we get
\begin{align*}
\A' =& \left(
\begin{array}{c|c}
I_k & 0_{k \times (n-k)} \\ \hline
    & 0_{n-k}
\end{array}
\right)\A'
\left(
\begin{array}{c|c}
I_k & 0_{k \times (n-k)} \\ \hline
    & 0_{n-k}
\end{array}
\right)
\oplus
\left(
\begin{array}{c|c}
0_k & 0_{k \times (n-k)} \\ \hline
    & I_{n-k}
\end{array}
\right)
\A'
\left(
\begin{array}{c|c}
0_k & 0_{k \times (n-k)} \\ \hline
    & I_{n-k}
\end{array}
\right)=\\
=&
\left(
\begin{array}{c|c}
\A_1' & 0_{k \times (n-k)} \\ \hline
     & \A_2'
\end{array}
\right),
\end{align*}
where $\A'$ is the conjugation of $\A$, and $\A'_1 \subseteq {\rm M}_{k}(K)$ and $\A'_2 \subseteq {\rm M}_{n-k}(K)$ are commutative subalgebras of M$_n(K)$ with maximum dimensions isomorphic to  $\A_1$ and  $\A_2$, respectively.
\begin{itemize}
\item
For  $n = 2 $ we have only one possibility, namely,  $\A$ is conjugated with 
\begin{equation}\label{commJacob111}
C^2_{2}(K)=\left(
\begin{array}{cc}
K & 0 \\
  & K
\end{array}
\right).
\end{equation}

\item 
For $n = 3$, either both algebras  $\A_1$ and $\A_2$ are  indecomposable or exactly one is indecomposable (recall that the maximal possible dimension of a commutative subalgebra of  ${\rm M}_3(K)$ is 3). In the first case, at first glance, $\A$ is conjugated   either with
\begin{equation*}
\Lambda_1 = \Biggl\{\left(
\begin{array}{ccc}
a & b        & 0 \\
  & a        & 0\\
  &          & c
  \end{array}
\right): a, b, c\in K\Biggr\}
\text{\,\, or\,\, } 
\Lambda_2 = \Biggl\{\left(
\begin{array}{ccc}
a & 0        & 0 \\
  & b        & c\\
  &          & b
  \end{array}
\right): a, b, c\in K\Biggr\}.
\end{equation*}
Note, however,  that considering  the invertible matrix 
$
Z = \left(
\begin{array}{ccc}
0 & 1        & 0 \\
0 & 0        & 1 \\
1 & 0        & 0
\end{array}
\right)
$ we have
\begin{equation*}
Z^{-1} \Lambda_1Z= \Lambda_2.
\end{equation*}
Thus $\A$ in this case is conjugated with 
\begin{equation}\label{commJacob7} 
C^4_{3}(K) = \Lambda_1 = \Biggl\{\left(
\begin{array}{ccc}
a & b        & 0 \\
  & a        & 0\\
  &          & c
  \end{array}
\right): a, b, c\in K\Biggr\}.
\end{equation}

\smallskip

In the case where exactly one of $\A_1$ and $\A_2$ is indecomposable, we get that $\A$ is conjugated with 
\begin{equation}\label{lastone}
C^5_{3}(K) = 
\left(
\begin{array}{ccc}
K & 0        & 0 \\
  & K        & 0 \\
  &          & K
\end{array}
\right).
\end{equation}
\end{itemize} 

Amongst the algebras  $C^1_1(K), C^1_2(K), C^2_2(K), C^1_3(K), C^2_3(K), C^3_3(K), C^4_3(K)$ and $C^5_3(K)$,  only $C^1_3(K)$ and $C^2_3(K)$ are isomorphic. However, by Corollary \ref{isomnotconj} these isomorphic subalgebras are not conjugated.

We summarize the above considerations (in this section) as follows:

\medskip

\begin{remark} \label{typical commutative}
Every commutative subalgebra of M$_n(K)$ with maximum dimension is conjugated with precisely one of these presented in $(\ref{commJacob0})-(\ref{lastone})$. Amongst these algebras,  $C^1_{2\ell+1}(K)$ and $C^2_{2\ell+1}(K)$ are isomorphic for every $\ell \geq 1$.
\end{remark}

By \cite{Jac} and the presentation above for algebraically closed fields $K$, we have determined, up to conjugation, all commutative subalgebras of M$_n(K)$ with maximum dimension. In fact, in  \cite{Jac} for~$n>3$, there is even  a weaker assumption on the field $K$ (not imperfect of characteristic $2$), but in the above characterization of commutative subalgebras of M$_2(K)$ and M$_3(K)$ with maximum dimension, the assumption that $K$ is algebraically closed is used. For example, in the case  of the field~$\mathbb{R}$ of real numbers,
$
\left\{
\left(
\begin{array}{cc}
a & -b \\
b & a 
\end{array}
\right) : 
a, b \in \mathbb{R}
\right\}
$ is a 2-dimensional subalgebra of M$_2(\mathbb{R})$ (isomorphic to the field of complex numbers) which is not conjugated with $C_2^{1}(K)$ nor $C_2^{2}(K)$. 

We conclude the section by showing that, for some of the commutative subalgebras $\A$ of M$_n(K)$ with maximum dimension, namely those $\A$'s presented in $(\ref{commJacob0}) - (\ref{commJacob5})$, the only possible conjugation of $\A$ which is contained in U$_n(K)$ is equal to $\A$ itself.
We will use this result to give an exact description of D$_q$ subalgebras of U$_n(K)$ with maximum dimension in Corollary \ref{max dim alg closed}.   

\medskip

\begin{proposition} \label{conjugation of Cn}
Let $\A$ be a commutative subalgebra of M$_n(K)$ with maximum dimension 
presented in one of $(\ref{commJacob0}) - (\ref{commJacob5})$.  If $X \in {\rm M}_n(K)$ is invertible and $X^{-1}\A X \subseteq {\rm U}_n(K)$, then $X^{-1} \mathcal{A} X = \A$.  
\end{proposition}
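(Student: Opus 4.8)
The plan is to exploit a uniform description of the algebras in $(\ref{commJacob0})$--$(\ref{commJacob5})$: each is of the form $\A = KI_n \oplus N$, where $N$ is the space of all matrices supported in a single top-right rectangular block. Writing $W = \Span(e_1, \dots, e_d)$ with $d = \dim W$ (so $d = \lfloor n/2\rfloor$ or $\lceil n/2\rceil$), one reads off directly from each of the presentations that
\[
N = \{\, M \in {\rm M}_n(K) : \im M \subseteq W \text{ and } W \subseteq \ker M \,\},
\]
that $N^2 = 0$, and that $W = NV = \Ann(N)$, where $V = K^n$ and $\Ann(N) = \bigcap_{M \in N}\ker M$. (This square-zero feature is exactly what fails for $C^3_3(K)$, whose radical is not square-zero, which is why that algebra is excluded from the statement.) In particular $N$ is a nilpotent ideal with $\A/N \cong K$, so $N = J(\A)$.

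First I would transfer this structure to $\B := X^{-1}\A X$. Since conjugation is an algebra isomorphism fixing $I_n$, we get $\B = KI_n \oplus N'$ with $N' = X^{-1}NX = J(\B)$ a space of nilpotent matrices. Setting $W' := X^{-1}W$ and conjugating the displayed characterization of $N$ (using $X^{-1}V = V$) gives
\[
N' = \{\, M : \im M \subseteq W' \text{ and } W' \subseteq \ker M \,\}, \qquad W' = N'V = \Ann(N'), \qquad \dim W' = d.
\]
The hypothesis $\B \subseteq {\rm U}_n(K)$ then forces $N' \subseteq {\rm U}_n(K)$; since every element of $N'$ is nilpotent, each is \emph{strictly} upper triangular, i.e.\ $Me_j \in \Span(e_1, \dots, e_{j-1})$ for all $M \in N'$ and all $j$.

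The heart of the argument is to show that this strict upper-triangularity pins down $W'$ as the standard coordinate subspace. Let $j_0$ be the least index with $e_{j_0} \notin W'$ (such an index exists since $\dim W' = d < n$). Because $N'$ realizes \emph{every} linear map $V/W' \to W'$ (this is the content of the equality describing $N'$), for any $w \in W'$ there is $M \in N'$ with $Me_{j_0} = w$; strict upper-triangularity then gives $w = Me_{j_0} \in \Span(e_1, \dots, e_{j_0 - 1})$, so $W' \subseteq \Span(e_1, \dots, e_{j_0-1})$. On the other hand $e_1, \dots, e_{j_0-1} \in W'$ by minimality of $j_0$, giving the reverse inclusion; hence $W' = \Span(e_1, \dots, e_{j_0-1})$ and, comparing dimensions, $j_0 - 1 = d$. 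Thus $W' = \Span(e_1, \dots, e_d) = W$.

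Finally, $W' = W$ feeds back into the two characterizations to yield $N' = \{M : \im M \subseteq W, \ W \subseteq \ker M\} = N$, whence $X^{-1}\A X = KI_n \oplus N' = KI_n \oplus N = \A$, as desired. The main obstacle is the middle step: converting the purely linear-algebraic hypothesis ``$N' \subseteq {\rm U}_n(K)$'' into the precise identification of the invariant subspace $W'$. This is where the square-zero (full rectangular block) structure is indispensable, since it is exactly what guarantees that $N'$ surjects onto all of $\operatorname{Hom}(V/W', W')$ and thereby makes the minimal-index argument go through.
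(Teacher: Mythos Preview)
Your proof is correct, but it proceeds along a genuinely different line from the paper's.

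The paper's argument is computational and goes through the form of the conjugating matrix: it observes that $\A$ contains the full strictly upper block $\left(\begin{smallmatrix}0_r & {\rm M}_{r\times s}(K)\\ 0 & 0_s\end{smallmatrix}\right)$ and that $X^{-1}\A X$ is contained in the full block upper triangular algebra of type $(r,s)$ (since ${\rm U}_n(K)$ is). Lemma~\ref{block triangular conjugation} (with $q=2$) then forces $X$ itself to be block upper triangular, $X=\left(\begin{smallmatrix}X_{11}&X_{12}\\0&X_{22}\end{smallmatrix}\right)$, after which a direct computation shows that conjugating $aI_n+\left(\begin{smallmatrix}0&A_{12}\\0&0\end{smallmatrix}\right)$ by such an $X$ simply replaces $A_{12}$ by $X_{11}^{-1}A_{12}X_{22}$, so $X^{-1}\A X=\A$.

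Your approach is more intrinsic: you never analyse $X$, but instead characterize the radical $N$ by the single invariant subspace $W=NV=\Ann(N)$, transport this to $W'=X^{-1}W$, and use a neat minimal-index argument (exploiting that $N'$ realizes \emph{all} of $\operatorname{Hom}(V/W',W')$) to pin $W'$ down as the standard coordinate span. The paper's route has the advantage of reusing machinery already developed for the block-triangular theory and yields the extra information that $X$ is block upper triangular; your route is self-contained, coordinate-free in spirit, and makes transparent exactly which structural feature (the ``full rectangular block'' shape, equivalently $N'\cong\operatorname{Hom}(V/W',W')$) drives the rigidity---and hence why $C^3_3(K)$ must be excluded.
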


\begin{proof}
If $n=1$, then $\A = K$ and
there is nothing to prove. Assume now that $n\ge2$, with
$\A$ equal to one of the algebras in $(\ref{commJacob0})-(\ref{commJacob2})$ or $(\ref{commJacob3})-(\ref{commJacob5})$.
Then there exist positive integers $r$ and $s$ such that $r+s = n$ and 
$$
\A = KI_n + 
\left(
\begin{array}{cc}
0_{r}               & {\rm M}_{r \times s}(K) \\
0_{s \times r}      & 0_{s}
\end{array}
\right),
$$
and so we have the inclusions
$$
\left(
\begin{array}{cc}
0_{r}               & {\rm M}_{r \times s}(K) \\
0_{s \times r}      & 0_{s}
\end{array}
\right)
\subseteq 
\A
\quad {\rm and} \quad 
X^{-1} \A X \subseteq {\rm U}_n(K) \subseteq 
\left(
\begin{array}{cc}
{\rm M}_{r}(K)      & {\rm M}_{r \times s}(K) \\
0_{s \times r}      & {\rm M}_{s}(K)
\end{array}
\right).
$$
Invoking Lemma \ref{block triangular conjugation}, with $q=2$, 
we deduce that there exist matrices $X_{11} \in {\rm M}_r(K)$, $X_{12} \in {\rm M}_{r \times s}(K)$, and $X_{22} \in {\rm M}_s(K)$ such that
$$
X = \left(
\begin{array}{cc}
X_{11}         & X_{12} \\
0_{s \times r} & X_{22}
\end{array}
\right).
$$
Therefore, by (\ref{inverse of block triangular}),
$$
X^{-1} =\left(
\begin{array}{cc}
X^{-1}_{11}    & X'_{12} \\
0_{s \times r} & X^{-1}_{22}
\end{array}
\right),
$$
where $X'_{12} = -X_{11}^{-1} X_{12} X_{22}^{-1}$.
Let $
\left(
\begin{array}{cc}
aI_r & A_{12} \\
    & aI_s
\end{array}
\right)
$ be an arbitrary element of 
$\A$, with $a \in K$, $A_{12} \in {\rm M}_{r \times s}(K)$. Then
\begin{align*}
X^{-1}
\left(
\begin{array}{cc}
aI_r & A_{12} \\
    & aI_s
\end{array}
\right)
X&=
\left(
\begin{array}{cc}
X^{-1}_{11}    & X'_{12} \\
0_{s \times r} & X^{-1}_{22}
\end{array}
\right)
\left(
aI_n+
\left(
\begin{array}{cc}
0_r & A_{12} \\
    & 0_s
\end{array}
\right)
\right)
\left(
\begin{array}{cc}
X_{11}         & X_{12} \\
0_{s \times r} & X_{22}
\end{array}
\right)= \\
&=
aI_n+
\left(
\begin{array}{cc}
0_r & X_{11}^{-1} A_{12} X_{22} \\
    & 0_s
\end{array}
\right),
\end{align*}
and so $X^{-1} \A X \subseteq \A$. Since we can take any matrix of M$_{r \times s}(K)$ for $A_{12}$, it is not hard to show that the opposite inclusion also holds. This completes the proof.
\end{proof}

For an odd integer $n \geq 3$, the algebras $C_n^1(K)$ and $C_n^2(K)$ are distinct subalgebras of U$_n(K)$. Moreover, we have the following:

\begin{corollary}\label{isomnotconj}
Let $n$ be an odd integer greater than or equal to 3. 
  Then the commutative subalgebras 
$C_n^1(K)$ 
and $C_n^2(K)$
of M$_n(K)$ 
are not conjugated.
\end{corollary}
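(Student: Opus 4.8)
The plan is to argue by contradiction, producing a numerical quantity that is preserved under conjugation but takes different values on $C_n^1(K)$ and $C_n^2(K)$. Write $n = 2\ell+1$ and set $V = K^n$. The invariant I would use is built from the Jacobson radical and its action on $V$. For $C_n^1(K)$ the radical is
\[
N_1 = \left(\begin{array}{cc} 0_\ell & {\rm M}_{\ell \times (\ell+1)}(K) \\ & 0_{\ell+1} \end{array}\right),
\]
while for $C_n^2(K)$ it is
\[
N_2 = \left(\begin{array}{cc} 0_{\ell+1} & {\rm M}_{(\ell+1) \times \ell}(K) \\ & 0_{\ell} \end{array}\right).
\]
In each case $N_i$ is a square-zero ideal with semisimple quotient $K$, and a short computation with $(aI_n+Z)^k = a^kI_n + ka^{k-1}Z$ shows that $N_i$ is exactly the set of nilpotent elements of the algebra.

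First I would note that if $X^{-1} C_n^1(K) X = C_n^2(K)$ for some invertible $X$, then, since conjugation is an algebra isomorphism carrying nilpotent elements to nilpotent elements, it must send the radical onto the radical, i.e.\ $X^{-1} N_1 X = N_2$. (Alternatively one could invoke Lemma~\ref{lemma block triangulation} applied to $I = N_i$ with $q=2$, together with Remark~\ref{numbers ni}.) The key step is then to introduce the invariant $d(N) := \dim_K(N\cdot V)$, the dimension of the span of $\{Zv : Z\in N,\ v\in V\}$. This is conjugation-invariant, because $N_2 V = (X^{-1} N_1 X)V = X^{-1} N_1 (XV) = X^{-1}(N_1 V)$ and $X^{-1}$ is a linear isomorphism, so $\dim_K N_2 V = \dim_K N_1 V$.

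Finally I would compute the two values directly. A generic element of $N_1$ sends $(u,v)\in K^\ell\times K^{\ell+1}$ to $(Av,0)$, and as $A$ ranges over ${\rm M}_{\ell\times(\ell+1)}(K)$ and $v$ over $K^{\ell+1}$ the vectors $Av$ fill all of $K^\ell$; hence $N_1 V = K^\ell\times\{0\}$ and $d(N_1)=\ell$. The identical computation for $N_2$ gives $N_2 V = K^{\ell+1}\times\{0\}$ and $d(N_2)=\ell+1$. Since $\ell\neq\ell+1$, this contradicts the conjugation-invariance of $d$, so $C_n^1(K)$ and $C_n^2(K)$ are not conjugate.

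The only genuine difficulty — the main obstacle — is the conceptual one of locating an invariant that breaks the apparent symmetry between the two algebras: they are transposes of one another, so symmetric invariants such as the dimension, the nilpotency index of the radical, or the maximal rank of a nilpotent element all coincide (each equal to $\ell$ for the last one). The asymmetry lives entirely in the shape of the rectangular off-diagonal block, and $d(N)=\dim_K(N\cdot V)$ is precisely the quantity that reads off the number of rows of that block; once this choice is made, everything reduces to the routine dimension count above.
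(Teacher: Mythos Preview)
Your proof is correct. The invariant $d(N)=\dim_K(N\cdot V)$ is indeed preserved under conjugation (since $XV=V$), and your computation of $d(N_1)=\ell$ versus $d(N_2)=\ell+1$ is straightforward and accurate.

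The paper takes a different route. It deduces the corollary from Proposition~\ref{conjugation of Cn}, which is the stronger statement that \emph{any} conjugate of $C_n^i(K)$ landing inside ${\rm U}_n(K)$ must equal $C_n^i(K)$ itself; since both $C_n^1(K)$ and $C_n^2(K)$ sit in ${\rm U}_n(K)$ and are visibly distinct, they cannot be conjugate. That proposition in turn rests on Lemma~\ref{block triangular conjugation}, which pins down the block shape of the conjugating matrix $X$. Your argument is more elementary and self-contained: you bypass the block-triangularity analysis of $X$ entirely and go straight to a numerical invariant. As you note parenthetically, your invariant is exactly the $n_1$ of Remark~\ref{numbers ni}, so the two arguments are cousins, but the paper's route buys the extra rigidity statement of Proposition~\ref{conjugation of Cn} (used later in Corollary~\ref{max dim alg closed}), while yours gives the corollary with less overhead.
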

Note that Proposition \ref{conjugation of Cn} cannot be extended to the subalgebras presented in  $(\ref{commJacob6}) - (\ref{lastone})$: 
%
\begin{example}
Conjugation of the subalgebra 
$C_2^2(K) = \left\{ 
\left(
\begin{array}{cc}
a & 0 \\
0 & b 
\end{array}
\right)
 : a, b \in K \right\}
$ of M$_2(K)$ with the invertible matrix 
$X = 
\left(
\begin{array}{cc}
1 & 1 \\
0 & 1 
\end{array}
\right)
$
gives 
$$
X^{-1}C^2_2(K) X = 
\left(
\begin{array}{cc}
1 & -1 \\
0 & 1 
\end{array}
\right)
C^2_2(K)
\left(
\begin{array}{cc}
1 & 1 \\
0 & 1 
\end{array}
\right)
=
\left\{ 
\left(
\begin{array}{cc}
a & a-b \\
0 & b 
\end{array}
\right)
 : a, b \in K \right\},
$$
which is a subalgebra of U$_2(K)$ different from $C^2_2(K)$. 
\end{example}
Similar examples can also be found in case of the subalgebras $C_{3}^3(K)$, $C_{3}^4(K)$ and $C_{3}^5(K)$ of M$_3(K)$. Two of them can be already constructed from the previous analysis, while defining $C_{3}^3(K)$ by formula (\ref{commJacob6}) and $C_{3}^4(K)$ by formula (\ref{commJacob7}).

\smallskip

In the following sections we will use the algebras presented in $(\ref{commJacob0}) - (\ref{lastone})$  for arbitrary fields. If we need the assumption that $K$ is an algebraically closed field, then we will stress this assumption explicitly.

\section{Isomorphic {\rm D}$_q$ subalgebras of~{\rm M}$_n(K)$ 
with max-dim db's are of the same type} \label{isomorphism problem}

In this section, we will treat the isomorphism problem of D$_q$ subalgebras of M$_n(K)$ with max-dim db's. Before we state the main results we will give a characterization of the subalgebras we are dealing with.

Note that by Remark \ref{maxtuple}, for any maximal D$_q$ subalgebra of M$_n(K)$ there exists exactly one $q$-tuple $(n_1, n_2, \ldots, n_q)$, which indicates the type of conjugated D$_q$ subalgebra of~M$_n(K)$ with max-comm db's. So we can look at the class of all maximal D$_q$ subalgebras of M$_n(K)$ associated with the fixed tuple $(n_1, n_2, \ldots, n_q)$. In general, we are not able to say much about this class, even if we treat algebras up to conjugation. However, if we restrict it to these algebras with maximum possible dimension for the considered class, then it consists of subalgebras conjugated with a 
D$_q$~subalgebra of M$_n(K)$ of type~$(n_1, n_2, \ldots, n_q)$ with max-dim db's, which is more accessible. We want to stress that only for specific tuples, which will be described in Section \ref{sectionmaximal}, the restricted class consists of D$_q$~subalgebras of M$_n(K)$ with maximum dimension.

In Theorem \ref{theorem typical dk algebras} we prove that if two D$_q$ subalgebras 
of M$_n(K)$ with max-dim db's are isomorphic, then they
are of the same type and the algebras in their diagonal blocks 
are pairwise isomorphic. Next, we will show that this theorem cannot be inverted, which indicates what should be modified to completely solve the isomorphism problem. 
Finally, for an algebraically closed field $K$, we will prove that 
two D$_q$ subalgebras 
of M$_n(K)$ with max-dim db's are isomorphic
if and only if they are of the same type and  their diagonal blocks 
are pairwise conjugated. In contrast with the examples of isomorphic but not conjugated commutative subalgebras of M$_n(K)$ with maximum dimension, it implies that isomorphic D$_q$ subalgebras of M$_n(K)$ with maximum dimension are conjugated.

Recall that if we consider  a 
{\rm D}$_q$ subalgebra of~{\rm M}$_n(K)$ of type $(n_1, n_2, \ldots, n_q)$ with max-dim db's, then
we always use the notation related to (\ref{block triang}). In other words, for 
{\rm D}$_q$ subalgebras $\mathcal{A}$ and $\mathcal{B}$ of~{\rm M}$_n(K)$ of types $(n_1, n_2, \ldots, n_q)$ and $(\ell_1, \ell_2, \ldots, \ell_q)$, respectively, with max-dim db's, we will write
\begin{equation} \label{typical Dq}
\mathcal{A} = 
\left(
\begin{array}{ccccc}
\mathcal{A}_{11} & \mathcal{A}_{12}  & \ldots      & \mathcal{A}_{1q} \\ 
                 & \mathcal{A}_{22}  & \ldots      & \mathcal{A}_{2q}  \\
                 &                   & \ddots      & \vdots   \\
                 &                   &             &  \mathcal{A}_{qq}            
\end{array}
\right)
\quad \quad {\rm and} \quad\quad
\mathcal{B} = 
\left(
\begin{array}{ccccc}
\mathcal{B}_{11} & \mathcal{B}_{12}  & \ldots      & \mathcal{B}_{1 q} \\ 
                 & \mathcal{B}_{22}  & \ldots      & \mathcal{B}_{2 q}  \\
                 &                   & \ddots      & \vdots   \\
                 &                   &             &  \mathcal{B}_{qq}            
\end{array}
\right),
\end{equation}
where $\mathcal{A}_{ii}$ (respectively, $\mathcal{B}_{ii}$) is a commutative subalgebra of M$_{n_i}(K)$ (respectively, M$_{\ell_i}(K)$) with maximum dimension for every $i$, $i =1,2 \ldots,q,$ and $\mathcal{A}_{ij}= {\rm M}_{n_i \times n_j}(K)$ and $\mathcal{B}_{ij}={\rm M}_{\ell_i \times \ell_j}(K)$ for all~$i$ and~$j$ such that $1 \leq i < j \leq q$. Following the notation in Proposition \ref{proposition power of ideal generated by commutators}, we henceforth denote the ideal of $\mathcal{A}$ (respectively, $\mathcal{B}$) generated by all commutators $[x,y]$, with $x, y \in \mathcal{A}$ (respectively, $x, y \in \mathcal{B}$) by $\mathcal{C}_{\A}$ (respectively, $\mathcal{C}_{\B}$).

In order to build a proof of Theorem~\ref{theorem typical dk algebras}, we will need a technical lemma based 
on~Proposition~\ref{proposition power of ideal generated by commutators}. 
\begin{lemma} \label{image of diagonal block}
Let $\varphi \colon \mathcal{A} \to \mathcal{B}$ be an isomorphism of D$_q$ subalgebras $\A$ and $\B$ of M$_n(K)$ of types $(n_1, n_2, \ldots, n_q)$ and $(\ell_1, \ell_2, \ldots \ell_q)$, respectively, with max-dim db's. 
Then,  using the 
notation in~(\ref{typical Dq}),  
\begin{equation*}
\varphi
\left(
\left( 
\begin{array}{c|c|c}
 &        & \\ \hline
 & \mathcal{A}_{ii} & \\ \hline 
 &        & 
\end{array}
\right)
\right)
\subseteq
\left(
\begin{array}{c|cccc}
& \mathcal{B}_{1 i}  & \ldots & \mathcal{B}_{1q} \\
& \vdots             & \ddots & \vdots \\
& \mathcal{B}_{ii}   & \ldots & \mathcal{B}_{iq} \\ \hline
&          &         & 
\end{array}
\right)
\end{equation*}
for every $i, \ i=1,2,\ldots,q$.
\end{lemma}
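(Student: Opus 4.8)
The plan is to exploit the isomorphism-invariance of the commutator ideal together with the explicit description of its powers in Proposition~\ref{proposition power of ideal generated by commutators}. First I would record that, since $\varphi$ is an isomorphism of $K$-algebras, it carries the ideal generated by all commutators onto the ideal generated by all commutators, so $\varphi(\C_\A)=\C_\B$, and consequently $\varphi(\C_\A^{\,j})=\C_\B^{\,j}$ for every $j$.

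Next, fix $i$ and let $a$ be an element of the embedded diagonal block, i.e.\ a matrix in $\A$ (it lies in $\A$ by the type structure, cf.\ (\ref{diagonal block})) whose only possibly nonzero block sits in position $(i,i)$. A direct block computation, using only that $\C_\A^{\,j}$ is supported on the block positions $(k,l)$ with $l-k\ge j$ (Proposition~\ref{proposition power of ideal generated by commutators}), yields the two annihilation relations
\[
\C_\A^{\,i}\cdot a = 0 \qquad\text{and}\qquad a\cdot \C_\A^{\,q-i+1}=0.
\]
Indeed, in the first product a nonzero $(k,i)$ block would require a $(k,i)$ block in $\C_\A^{\,i}$, i.e.\ $i-k\ge i$, which is impossible; the second is dual. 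Applying $\varphi$ and invoking the first step transforms these into $\C_\B^{\,i}\cdot\varphi(a)=0$ and $\varphi(a)\cdot \C_\B^{\,q-i+1}=0$.

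Finally I would run a matrix-unit argument to read off the support of $b:=\varphi(a)\in\B$. Since $\C_\B^{\,i}$ contains every matrix unit $e_{pm}$ with $p$ in a block of index $k$ and $m$ in a block of index $l$ satisfying $l-k\ge i$, and since $e_{pm}b$ places the $m$-th row of $b$ into the $p$-th row, the relation $\C_\B^{\,i}\cdot b=0$ forces the $m$-th row of $b$ to vanish for every index $m$ lying in a block of index $l\ge i+1$ (for such $m$ an admissible $p$ exists, namely any $p$ with $1\le k\le l-i$). Hence all blocks of $b$ in block-rows $i+1,\dots,q$ are zero, so $b$ is supported on block-rows $\le i$. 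Dually, $b\cdot \C_\B^{\,q-i+1}=0$ together with the matrix units $e_{pm}\in\C_\B^{\,q-i+1}$ (now $b\,e_{pm}$ moves column $p$ of $b$ into column $m$) forces every column of $b$ indexed by a block of index $k\le i-1$ to vanish, so $b$ is supported on block-columns $\ge i$. Combining, $b$ has nonzero blocks only in positions $(k,l)$ with $k\le i\le l$, which is exactly the claimed region; and since $b\in\B$, its blocks there automatically lie in the corresponding $\B_{kl}$.

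I expect the only delicate point to be the bookkeeping in the last paragraph: one must keep straight that left annihilation by $e_{pm}$ controls rows (indexed by the \emph{second} subscript $m$) while right annihilation controls columns (indexed by the \emph{first} subscript $p$), and that the existence of an admissible matrix unit for a given index is precisely what pins the threshold to the block-index $i$. The degenerate cases $i=1$ (where the right-hand relation is vacuous because $\C_\A^{\,q}=0$ by Proposition~\ref{proposition identities with nilpotent ideal}) and $i=q$ (where the left-hand relation is vacuous) should be checked to match the first-block-row and last-block-column regions, respectively.
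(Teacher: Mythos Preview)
Your proposal is correct and follows essentially the same approach as the paper: both transfer the annihilation relations $\C_\A^{\,i}\cdot a=0$ and $a\cdot\C_\A^{\,q-i+1}=0$ through $\varphi$ and then probe with matrix units from $\C_\B^{\,i}$ (respectively $\C_\B^{\,q-i+1}$) to kill the unwanted rows (respectively columns). The paper uses the specific units $e_{1j}$ rather than a general $e_{pm}$, but this is only a cosmetic difference, and the treatment of the edge cases $i=1$ and $i=q$ agrees.
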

\begin{proof}
It follows from 
$\varphi(\mathcal{C}_{\mathcal{A}}) = \mathcal{C}_{\mathcal{B}}$ that $\varphi(\mathcal{C}_{\mathcal{A}}^i) = \mathcal{C}_{\mathcal{B}}^i$ for every $i, \ i =1,2,\ldots,q,$ and so, by
Proposition~\ref{proposition power of ideal generated by commutators}, 
\[
\mathcal{C}^i_{\A} \cdot 
\left( 
\begin{array}{c|c|c}
 &        & \\ \hline
 & {\A}_{ii} & \\ \hline 
 &        & 
\end{array}
\right) = \{0_n\},
\]
implying that 
\begin{equation} \label{product is zero}
\mathcal{C}^i_{\B} \cdot \varphi
\left(
\left( 
\begin{array}{c|c|c}
 &        & \\ \hline
 & {\A}_{ii} & \\ \hline 
 &        & 
\end{array}
\right)
\right)
= \{0_n\}.
\end{equation}

Let $1 < i < q.$ We first show that the equality in (\ref{product is zero}) implies that
\begin{equation} \label{inclusion to lemma}
\varphi
\left(
\left( 
\begin{array}{c|c|c}
 &        & \\ \hline
 & {\A}_{ii} & \\ \hline 
 &        & 
\end{array}
\right)
\right)
\subseteq
\left(
\begin{array}{cccccc}
{\B}_{11} & \ldots  & {\B}_{1 i}  & {\B}_{1,i+1}  & \ldots  & {\B}_{1q} \\
       &  \ddots & \vdots   &  \vdots    & \ddots  & \vdots \\
       &         & {\B}_{ii}   & {\B}_{i,i+1}  & \ldots  & {\B}_{iq} \\ 
       &         &          &  0_{n_{i+1}} & \ldots  & 0_{n_{i+1}, n_q}  \\
       &         &          &            & \ddots  & \vdots  \\
       &         &          &            &         &  0_{n_q}
\end{array}
\right).
\end{equation}
To this end, let $Y =(y_{ij})$ be an arbitrary matrix in 
$\varphi
\left(
\left( 
\begin{array}{c|c|c}
 &        & \\ \hline
 & {\A}_{ii} & \\ \hline 
 &        & 
\end{array}
\right)
\right),
$ and let $j$ be any integer such that $\ell_1+\ell_2+\cdots+\ell_i <j \leq n$ (recall that $\ell_1+\ell_2+\cdots+\ell_q = n$). Then it follows again from~Proposition~\ref{proposition power of ideal generated by commutators} that $e_{1j} \in \mathcal{C}_{\B}^i$, and so by (\ref{product is zero}),
$$0_n= e_{1j} Y = \left(
\begin{array}{ccccc}
y_{j1} & y_{j2} & \ldots & y_{jn} \\
0      &  0     & \ldots & 0      \\
\vdots & \vdots & \ddots & \vdots \\
0      & 0      & \ldots & 0
\end{array}
\right).
$$
Hence, rows $\ell_1+\ell_2+\cdots+\ell_i+1, \ell_1+\ell_2+\cdots+\ell_i+2, \ldots, n$ of $Y$ are zero, which establishes (\ref{inclusion to lemma}). 

Similar arguments, starting from the equality 
$$
\left( 
\begin{array}{c|c|c}
 &        & \\ \hline
 & {\A}_{ii} & \\ \hline 
 &        & 
\end{array}
\right) \cdot
\mathcal{C}^{q-i+1}_{\A} = \{0_n\},
$$ 
can be used to show that columns $1$, $2$, \ldots, $\ell_1+\ell_2+\cdots+\ell_{i-1}$  columns of an arbitrary matrix in the image
$
\varphi
\left(
\left( 
\begin{array}{c|c|c}
 &        & \\ \hline
 & {\A}_{ii} & \\ \hline 
 &        & 
\end{array}
\right)
\right)
$
are zero. 

As far as the cases $i=1$ and $i=q$ are concerned, it is evident that the gist of the above arguments also shows that rows $\ell_1+1, \ell_1+2, \ldots, n$ of every matrix in 
$\varphi
\left(
\left( 
\begin{array}{c|c|c}
\mathcal{A}_{11} &        & \\ \hline
 &  & \\ \hline 
 &        & 
\end{array}
\right)
\right)$ are zero, and that columns 1,2,\ldots,$\ell_1+\ell_2+\cdots+\ell_{q-1}$ of
every matrix in 
$\varphi
\left(
\left( 
\begin{array}{c|c|c}
 &        & \\ \hline
 &  & \\ \hline 
 &        & \mathcal{A}_{qq}
\end{array}
\right)
\right)$
are zero, which completes the proof.
\end{proof}

\medskip

\begin{theorem} \label{theorem typical dk algebras}
Let $\A$ and $\B$ be {\rm D}$_q$ subalgebras of~{\rm M}$_n(K)$ of types $(n_1, n_2, \ldots, n_q)$ and $(\ell_1, \ell_2, \ldots, \ell_q)$, respectively, with max-dim db's. If $\A$ and $\B$ are isomorphic, then the $q$-tuples $(n_1, n_2, \ldots, n_q)$ and $(\ell_1, \ell_2, \ldots, \ell_q)$ are equal and, using the notation in (\ref{typical Dq}), the algebras $\A_{ii}$ and $\B_{ii}$ are isomorphic for every $i,  \ i =1, 2, \ldots, q$. 
\end{theorem}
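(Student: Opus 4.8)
The plan is to pass to the abelianizations and track a complete set of orthogonal idempotents through the isomorphism. Since $\varphi\colon\A\to\B$ is an algebra isomorphism, it carries the ideal generated by all commutators to the ideal generated by all commutators, so $\varphi(\C_\A)=\C_\B$ and $\varphi$ induces an isomorphism $\bar\varphi\colon\A/\C_\A\to\B/\C_\B$. By Proposition \ref{proposition power of ideal generated by commutators} (the case $i=1$), $\C_\A$ is exactly the strictly upper block triangular part of $\A$, and a direct check of the multiplication on the diagonal blocks shows $\A/\C_\A\cong\A_{11}\times\A_{22}\times\cdots\times\A_{qq}$; likewise $\B/\C_\B\cong\B_{11}\times\cdots\times\B_{qq}$. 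Thus $\bar\varphi$ is an isomorphism between these two finite products of commutative algebras.

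Next I would locate the diagonal idempotents. For each $i$ let $E_i\in\A$ denote the idempotent equal to $I_{n_i}$ in the $(i,i)$ block and zero elsewhere (and let $F_i\in\B$ be its $\B$-counterpart); these form a complete set of orthogonal idempotents, and the image of $E_i$ in $\A/\C_\A$ is precisely the identity of the $i$-th factor $\A_{ii}$. Since $E_i$ lies in the embedded block $\A_{ii}$, Lemma \ref{image of diagonal block} forces $\varphi(E_i)$ to have all its nonzero blocks in positions $(r,s)$ with $r\le i\le s$; in particular its only nonzero diagonal block sits in position $(i,i)$. Summing $\sum_{k}\varphi(E_k)=\varphi(I_n)=I_n$ and reading off the $(i,i)$ block, only the term $k=i$ contributes, so $[\varphi(E_i)]_{ii}=I_{\ell_i}$. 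Consequently $\bar\varphi$ sends the identity of the $i$-th factor $\A_{ii}$ to the identity of the $i$-th factor $\B_{ii}$, and therefore restricts to an isomorphism $\A_{ii}\cong\B_{ii}$ for every $i=1,2,\ldots,q$. This already yields the pairwise isomorphism of diagonal blocks, with matching indices.

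Finally, to get $n_i=\ell_i$ I would compare dimensions. Because $\A_{ii}$ and $\B_{ii}$ are commutative subalgebras of maximum dimension, $\dim_K\A_{ii}=\lfloor n_i^2/4\rfloor+1$ and $\dim_K\B_{ii}=\lfloor \ell_i^2/4\rfloor+1$; the isomorphism $\A_{ii}\cong\B_{ii}$ gives $\lfloor n_i^2/4\rfloor=\lfloor \ell_i^2/4\rfloor$, and since the function $m\mapsto\lfloor m^2/4\rfloor$ is strictly increasing on the positive integers this forces $n_i=\ell_i$. Hence $(n_1,\ldots,n_q)=(\ell_1,\ldots,\ell_q)$. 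I expect the only delicate point to be the middle step: using Lemma \ref{image of diagonal block} together with the completeness relation $\sum_k\varphi(E_k)=I_n$ to show that $\varphi(E_i)$ has the full identity $I_{\ell_i}$ in its $(i,i)$ block and no other diagonal block, which is exactly what pins the $i$-th factor to the $i$-th factor and keeps the order of the tuple intact.
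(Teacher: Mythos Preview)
Your argument is correct and essentially follows the paper's approach. Both proofs pass to the induced isomorphism $\bar\varphi\colon\A/\C_\A\to\B/\C_\B$, identify the quotients with the products $\prod_i\A_{ii}$ and $\prod_i\B_{ii}$, invoke Lemma~\ref{image of diagonal block} to see that the image of the $i$-th factor has its only nonzero diagonal block in position $(i,i)$, and finish with the dimension comparison $\lfloor n_i^2/4\rfloor=\lfloor\ell_i^2/4\rfloor$. The only cosmetic difference is in how factor $i$ gets pinned to factor $i$: the paper applies Lemma~\ref{image of diagonal block} to both $\varphi$ and $\varphi^{-1}$ to get a two-sided inclusion $\bar\varphi(\A_{ii})=\B_{ii}$, whereas you track the idempotents $E_i$ and use $\sum_k\varphi(E_k)=I_n$ to read off $[\varphi(E_i)]_{ii}=I_{\ell_i}$, hence $\bar\varphi(\bar E_i)=\bar F_i$, which immediately forces $\bar\varphi(\A_{ii})=\B_{ii}$ without a separate appeal to $\varphi^{-1}$.
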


\begin{proof}
We will use the notation in~(\ref{typical Dq}). Let $\varphi \colon \A \to \B$ be an isomorphism of the algebras $\A$ 
and~$\B$. 
As in Lemma \ref{image of diagonal block}, we have $\varphi(\mathcal{C}_{\A}) = \mathcal{C}_{\B}$,
which implies the induced isomorphism $\overline{\varphi} \colon \A/\mathcal{C}_{\A} \to \B/\mathcal{C}_{\B}$. 
By Proposition \ref{proposition power of ideal generated by commutators},
we identify the quotient algebras $\A/\mathcal{C}_{\A}$ and~$\B/\mathcal{C}_{\B}$ with the direct products $\A_{11} \times \A_{22} \times \cdots \times \A_{qq}$ and $\B_{11} \times \B_{22} \times \cdots \times \B_{qq}$ of the 
algebras~$\A_{ii}$ and $\B_{ii}, \ i=1,2,\ldots,q$, respectively.  
As $\mathcal{C}_{\B}$ comprises all matrices with zero entries in the diagonal blocks,  the inclusion
$$
\overline{\varphi}(0_{n_1} \times \ldots \times 0_{n_{i-1}} \times \A_{ii} \times 0_{n_{i+1}} \times \ldots \times 0_{n_q}) \subseteq
0_{\ell_1} \times \ldots \times 0_{\ell_{i-1}} \times \B_{ii} \times 0_{\ell_{i+1}} \times \ldots \times 0_{\ell_q}.
$$
follows from Lemma \ref{image of diagonal block}.
Similarly, Lemma \ref{image of diagonal block} applied to the inverse ${\varphi}^{-1}$ 
yields
$$
(\overline{\varphi})^{-1}(0_{\ell_1} \times \ldots \times 0_{\ell_{i-1}} \times \B_{ii} \times 0_{\ell_{i+1}} \times \ldots \times 0_{\ell_q}) \subseteq
0_{n_1} \times \ldots \times 0_{n_{i-1}} \times \A_{ii} \times 0_{n_{i+1}} \times \ldots \times 0_{n_q}.
$$
These two inclusions
imply the equality
$$
\overline{\varphi}(0_{n_1} \times \ldots \times 0_{n_{i-1}} \times \A_{ii} \times 0_{n_{i+1}} \times \ldots \times 0_{n_q}) =
0_{\ell_1} \times \ldots \times 0_{\ell_{i-1}} \times \B_{ii} \times 0_{\ell_{i+1}} \times \ldots \times 0_{\ell_q},
$$
and so the algebras $\A_{ii}$ and $\B_{ii}$ are isomorphic. 

Next, since $\A_{ii}$ and $\B_{ii}$ are commutative subalgebras of M$_{n_i}(K)$ and M$_{\ell_i}(K)$  (respectively) with maximum dimension, it follows from Schur's Theorem that
\begin{equation} \label{maximal dimension needed}
{\rm dim}_K \A_{ii} = 1+ \left \lfloor \frac{n^2_i}{4} \right \rfloor = 1+ \left \lfloor \frac{\ell^2_i}{4} \right \rfloor = {\rm dim}_K \B_{ii},
\end{equation}
which implies the equality $n_i = \ell_i$ for any $i = 1, 2, \ldots, q$. 
\end{proof}

Note that in the equality (\ref{maximal dimension needed}) the assumption that $\mathcal{A}_{ii}$ and $\mathcal{B}_{ii}$ are commutative subalgebras of matrices with maximum dimension is essential. 
By Theorem \ref{max up to conj} a conclusion retlated to that presented in Theorem \ref{theorem typical dk algebras} holds for D$_q$ subalgebras $\A$ and $\B$ of M$_n(K)$ of types $(n_1, n_2, \ldots, n_q)$ and  $(\ell_1, \ell_2, \ldots, \ell_q)$, respectively with max-comm db's, if we assume that  $\A$ and $\B$ are conjugated. In this regard, we pose the following two questions:


\begin{question}\label{Ques1}
Do there exist isomorphic D$_q$  subalgebras $\A$ and $\B$ of M$_n(K)$ of types $(n_1, n_2, \ldots, n_q)$ and  $(\ell_1, \ell_2, \ldots, \ell_q)$, respectively, with max-comm db's,  which are not conjugated?
\end{question}

\begin{question}\label{Ques2}
Do there exist isomorphic D$_q$  subalgebras $\A$ and $\B$ of M$_n(K)$ of types $(n_1, n_2, \ldots, n_q)$ and  $(\ell_1, \ell_2, \ldots, \ell_q)$, respectively, with max-comm db's,  such that  $n_i\neq \ell_i$ for at least one~$i$?
\end{question}

Note that the paragraph preceding the two questions above implies that a positive answer to Question \ref{Ques2} would also answer Question \ref{Ques1} in the positive.



The next part of this section will lead us to a full 
characterization of isomorphisms between {\rm D}$_q$ subalgebras of~{\rm M}$_n(K)$ with max-dim db's if the field $K$ is algebraically closed. 
Firstly, without this assumption on $K$, we will show that there exist non-isomorphic {\rm D}$_q$ subalgebras of~{\rm M}$_n(K)$ of the same type with max-dim db's, and so the converse of Theorem \ref{theorem typical dk algebras} does not hold.  

\begin{lemma} \label{nonisomorphic dq}
Let $\A$ and $\B$ be {\rm D}$_q$ subalgebras of~{\rm M}$_n(K)$ 
of type $(n_1, n _2, \ldots, n_q)$ with max-dim db's, such that, for some $j, \ n_j$ is odd, $n_j\ge3$ and the $j$-th diagonal blocks of algebras $\A$ and $\B$ are $C_{n_j}^{1}(K)$ and $C_{n_j}^{2}(K)$,  
respectively. Then $\A$ and $\B$ are not isomorphic.    
\end{lemma}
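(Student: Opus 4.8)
The plan is to argue by contradiction: assume an algebra isomorphism $\varphi\colon\A\to\B$ exists, normalise it so that it respects the block grading, and then extract from it an isomorphism of one-sided modules attached to the $j$-th diagonal block whose minimal number of generators can be computed and is forced to differ for $C^1_{n_j}(K)$ and $C^2_{n_j}(K)$. Write $n_j=2\ell+1$, and let $E_1,\dots,E_q$ and $E_1',\dots,E_q'$ be the diagonal block idempotents of $\A$ and $\B$.

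First I would reduce to a block-respecting isomorphism. By Proposition~\ref{proposition power of ideal generated by commutators} the quotient $\A/\C_\A$ is identified with $\A_{11}\times\cdots\times\A_{qq}$, and similarly for $\B$, and $\varphi(\C_\A)=\C_\B$ induces an isomorphism $\overline\varphi$ on these quotients. The proof of Theorem~\ref{theorem typical dk algebras} (via Lemma~\ref{image of diagonal block}) shows that $\overline\varphi$ carries the $i$-th factor onto the $i$-th factor, so $\varphi(E_i)\equiv E_i'\pmod{\C_\B}$ for every $i$. Since $\C_\B$ is nilpotent (Proposition~\ref{proposition identities with nilpotent ideal}), the complete orthogonal idempotent systems $\{\varphi(E_i)\}$ and $\{E_i'\}$ are conjugate by a unit of $\B$; composing $\varphi$ with the corresponding inner automorphism, I may assume $\varphi(E_i)=E_i'$ for all $i$. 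Then $\varphi$ carries each block $E_i\A E_k$ bijectively onto $E_i'\B E_k'$ compatibly with multiplication, and in particular restricts to an algebra isomorphism $\varphi_j\colon\A_{jj}=C^1_{n_j}(K)\to\B_{jj}=C^2_{n_j}(K)$.

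Next I would exploit a left–right asymmetry. Since $q\ge2$ the index $j$ has a neighbour; suppose $j\ge2$ (the case $j=1$ is symmetric, using the right neighbour $j+1$, left modules, and the span of the images of the radical instead of its row spaces). Put $M:=E_{j-1}\A E_j={\rm M}_{n_{j-1}\times n_j}(K)$, regarded as a right $\A_{jj}$-module, and $M':=E_{j-1}'\B E_j'$, a right $\B_{jj}$-module. Since $\varphi(ma)=\varphi(m)\varphi_j(a)$ and $\varphi_j(J(\A_{jj}))=J(\B_{jj})$, the map $\varphi$ is an isomorphism of right $\A_{jj}$-modules between $M$ and $M'$ (the latter viewed as an $\A_{jj}$-module through $\varphi_j$), whence $\dim_K M/MJ(\A_{jj})=\dim_K M'/M'J(\B_{jj})$. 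A direct computation shows that $MJ(\A_{jj})$ consists of all matrices whose rows lie in the span $\Sigma$ of the row spaces of the elements of $J(\A_{jj})$, so the left-hand top has dimension $n_{j-1}(n_j-\dim_K\Sigma)$, and similarly on the right. For $C^1_{2\ell+1}(K)$ one gets $\dim_K\Sigma=\ell+1$, hence top dimension $n_{j-1}\ell$, while for $C^2_{2\ell+1}(K)$ one gets $\dim_K\Sigma=\ell$, hence $n_{j-1}(\ell+1)$. Equating the two forces $n_{j-1}=0$, a contradiction.

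I expect the main obstacle to be the normalisation in the first step, namely justifying that $\varphi$ may be taken to fix the block idempotents, so that it respects the block grading and not merely the abstract product of the diagonal blocks. This rests on the order-preservation already isolated in the proof of Theorem~\ref{theorem typical dk algebras} together with the conjugacy of idempotent systems modulo the nilpotent ideal $\C_\B$. Once this is secured, the computation of the top of the one-sided module $M$ is routine; the conceptual content is that the \emph{isomorphism class} of $C^1_{n_j}(K)$ does not record whether its radical has large image ``on the left'' or large row span ``on the right,'' whereas its embedding into $\A$ does, and this is precisely what the unequal quantities $\ell$ and $\ell+1$ detect.
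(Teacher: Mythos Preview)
Your argument is correct, and it exploits the same numerical asymmetry as the paper (the $\ell$ versus $\ell+1$ split in the radical of $C^1_{n_j}$ versus $C^2_{n_j}$, measured against a neighbouring full off-diagonal block), but the packaging is genuinely different.

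The paper never normalises $\varphi$ to respect the block idempotents. Instead it works directly with isomorphism-invariant products of ideals: for $q=2$ it compares $\dim_K J(\A)\C_\A$ with $\dim_K J(\B)\C_\B$ (or the opposite order for $j=2$), and for $q>2$ it passes to the quotient by $\C^2$ and uses Lemma~\ref{image of diagonal block} to bound where the image of the $j$-th diagonal block (and hence of its radical) can land. This produces the dimensions $n_{j1}n_{j+1}$ versus $n_{j2}n_{j+1}$ (or the analogous pair with $n_1$ for $j=q$), and the contradiction. The advantage is that nothing beyond Lemma~\ref{image of diagonal block} and elementary block multiplication is needed; the cost is a case split $q=2$ versus $q>2$, and within the latter $j<q$ versus $j=q$.

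Your route trades that case analysis for one structural step: the conjugacy, by a unit of $\B$, of two complete orthogonal idempotent systems that agree modulo the nil ideal $\C_\B$. Once that is granted, the comparison $\dim_K M/MJ(\A_{jj})=\dim_K M'/M'J(\B_{jj})$ is an honest module-top computation and handles all $j$ uniformly (left neighbour for $j\ge 2$, right neighbour for $j=1$). This is cleaner and explains \emph{why} the two algebras differ: the abstract isomorphism class of $C^1_{n_j}\cong C^2_{n_j}$ does not remember on which side the radical has the larger span, but the bimodule $E_{j-1}\A E_j$ does. The only point to make airtight is the conjugacy of idempotent systems; for finite-dimensional $K$-algebras this follows from uniqueness of projective covers (two idempotents congruent modulo the radical give isomorphic projectives, hence are conjugate) applied inductively, which is standard but worth a one-line citation.
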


\begin{proof}
Let $n_{j1} = \left \lfloor \frac{n_j}{2} \right \rfloor $, $n_{j2} = \left \lfloor \frac{n_j}{2} \right \rfloor +1$. Then $n_{j1} \neq n_{j2}$ and by $(\ref{commJacob1}), (\ref{commJacob2}), (\ref{commJacob4})$ and
$(\ref{commJacob5})$, 
$$
C_{n_j}^1(K) = 
K I_{n_j} + 
\left(
\begin{array}{cc}
0_{n_{j1}}                 & {\rm M}_{n_{j1} \times n_{j2}}(K) \\
0_{n_{j2} \times n_{j1}} & 0_{n_{j2}}
\end{array}
\right)
\;\;$$
and
$$
C_{n_j}^2(K) = 
K I_{n_j} + 
\left(
\begin{array}{cc}
0_{n_{j2}}                 & {\rm M}_{n_{j2} \times n_{j1}}(K) \\
0_{n_{j1} \times n_{j2}} & 0_{n_{j1}}
\end{array}
\right).
$$

We first consider the case $q =2$ and $j=1$. 
Then the Jacobson radical $J(\A)$ of $\mathcal{A}$ satisfies
$$
\left( 
\begin{array}{cc|c}
{0_{n_{11}}}                 & {\rm M}_{n_{11} \times n_{12}}(K) & {\rm M}_{n_1 \times n_2}(K) \\ 
                             & {0_{n_{12}}}                      &                       \\ \hline
                             &                                   & 0_{n_2}
\end{array}
\right)
\subseteq J(\mathcal{A}) \subseteq
\left( 
\begin{array}{cc|c}
{0_{n_{11}}}                 & {\rm M}_{n_{11} \times n_{12}}(K) & {\rm M}_{n_1 \times n_2}(K) \\ 
                             & {0_{n_{12}}}                &                       \\ \hline
                             &                             & {\rm M}_{n_{2}}(K)
\end{array}
\right),
$$
and by Proposition \ref{proposition power of ideal generated by commutators},
$$
\mathcal{C}_{\mathcal{A}} =
\left( 
\begin{array}{c|c}
0_{n_{1}}     & {\rm M}_{n_1 \times n_2}(K) \\ \hline
              & 0_{n_{2}}
\end{array}
\right),
$$
implying that
$$
{\rm dim}_K (J(\A) \mathcal{C}_{\A}) = {\rm dim}_K
\left(
\begin{array}{c|c}
0_{n_{11} \times n_{1}}     & {\rm M}_{n_{11} \times n_2}(K) \\ \hline
                            & 0_{(n_{12}+n_{2}) \times n_{2}}
\end{array}
\right) = n_{11} n_2.
$$
Similarly,
${\rm dim}_K (J({\B}) \mathcal{C}_{\B}) = n_{12} n_2$. If the algebras $\A$ and $\B$ were isomorphic, then the images of the ideals $J({\A})$ and $\mathcal{C}_{\A}$ of $\A$ under any algebra isomorphism from $\A$ to $\B$ would be the ideals $J({\B})$ and~$\mathcal{C}_{\B}$ of $\B$, respectively, and since the respective dimensions would be equal, we would have that
$$
n_{11} n_2 = {\rm dim}_K (J(\A)\mathcal{C}_{\A}) = {\rm dim}_K (J(\B)\mathcal{C}_{\B}) = n_{12} n_2,
$$  
i.e., $n_{11} = n_{12}$; a contradiction. Therefore,
$\A$ and $\B$ are not isomorphic.

The case $q = 2$ and $j = 2$ is very similar to the above one. Instead of the dimensions of $J(\A)\mathcal{C}_{\A}$ and $J(\B)\mathcal{C}_{\B}$ we have to compare the dimensions of $\mathcal{C}_{\A}J(\A)$ and $\mathcal{C}_{\B}J(\B)$.

Now we assume that $q>2$. Suppose (for the contrary) that 
$\varphi \colon \B \to \A$ is an isomorphism, and let $\overline{\varphi} \colon \B/\mathcal{C}_{\B}^2 \to \A/\mathcal{C}_{\A}^2$ be the induced isomorphism. (A similar strategy was followed in the proof of~Theorem~\ref{theorem typical dk algebras}.) By $\overline{x}$ and $\overline{y}$ we denote the images of elements $x \in \A$ and $y \in \B$ in the quotient algebras $\A/\mathcal{C}^2_\A$ and $\B/\mathcal{C}^2_\B$, respectively. 
Consider the subspace 
$$
V_j = \overline{
\left( 
\begin{array}{c|c|c}
 &        & \\ \hline
 & J(\B_{jj}) & \\ \hline 
 &        & 
\end{array}
\right)
}
\overline{
\mathcal{C}_\B}
$$
of the quotient algebra $\B/\mathcal{C}^2_\B$. Since, by assumption, $\B_{jj} = C_{n_j}^{2}(K)$, we have that 
\begin{equation} \label{JBj}
J(\B_{jj}) = \left(
\begin{array}{cc}
0_{n_{j2}}                 & {\rm M}_{n_{j2} \times n_{j1}}(K) \\
0_{n_{j1} \times n_{j2}} & 0_{n_{j1}}
\end{array}
\right),
\end{equation}
and so by Proposition \ref{proposition power of ideal generated by commutators},
\begin{equation} \label{Vj}
V_j=
\overline{
\left( 
\begin{array}{c|c|c}
 &        & \\ \hline
 & J(\B_{jj}) & \\ \hline 
 &        & 
\end{array}
\right)
}
\overline{
\mathcal{C}_\B}
=
\overline{
\left( 
\begin{array}{c|c|c}
 &        & \\ \hline
 & J(\B_{jj}) & \\ \hline 
 &        & 
\end{array}
\right)
}
\cdot
\overline{
\left(
\begin{array}{ccccc}
0_{n_1} & \B_{12}  & 0_{n_1 \times n_3} & \ldots              & 0_{n_1 \times n_{q}}      \\
        & \ddots   & \ddots              & \ddots              & \vdots      \\ 
        &          &  \ddots             &    \ddots           & 0_{n_{q-2} \times n_{q}}      \\
        &          &                     &    \ddots        & \B_{q-1,q}   \\
        &          &                     &                     & 0_{n_q}
\end{array}
\right)
}. 
\end{equation}
We consider two possibilities, namely $j<q$ or $j=q$, as we did above with the case $q=2$. 

Firstly, let $j <q$. Then, by (\ref{Vj}),
\begin{equation} \label{dim Vj}
{\rm dim}_K V_j = 
{\rm dim}_K (J(\B_{jj}) \B_{j,j+1}) =   {\rm dim}_K \left( \left(
\begin{array}{cc}
0_{n_{j2}}                 & {\rm M}_{n_{j2} \times n_{j1}}(K) \\
0_{n_{j1} \times n_{j2}} & 0_{n_{j1}}
\end{array}
\right) \cdot {\rm M}_{n_j \times n_{j+1}}(K) 
\right)
= n_{j2}n_{j+1}.
\end{equation}
Under an isomorphism of algebras, nilpotent elements are mapped to nilpotent elements, and so, invoking Lemma \ref{image of diagonal block}, we have the inclusion 
\begin{equation} \label{inclusion}
\overline{\varphi}(V_j) = 
\overline{
\varphi
\left(
\left( 
\begin{array}{c|c|c}
 &        & \\ \hline
 & J(\B_{jj}) & \\ \hline 
 &        & 
\end{array}
\right)
\right)
}
\overline{
\mathcal{C}_\A}
\subseteq 
\overline{
\left(
\begin{array}{c|ccccc}
& \A_{1 j}     & \A_{1, j+1}   &\ldots  & \A_{1q}   \\
& \vdots       & \vdots       & \ddots & \vdots   \\
& \A_{j-1,j}   & \A_{j-1,j+1} &\ldots  & \A_{j-1,q}  \\
& J(\A_{jj})   & \A_{j,j+1}   & \ldots & \A_{jq} \\ \hline
&              &              &        &
\end{array}
\right)
} \overline{\mathcal{C}_A}, 
\end{equation}
where
\begin{equation} \label{JAj}
J(\A_{jj}) = J(C_{n_j}^1(K)) =  
\left(
\begin{array}{cc}
0_{n_{j1}}                 & {\rm M}_{n_{j1} \times n_{j2}}(K) \\
0_{n_{j2} \times n_{j1}} & 0_{n_{j2}}
\end{array}
\right).
\end{equation}
Since the diagonal blocks
of $\mathcal{C}_\A$ are zero and the product of such elements in the quotient algebra~$\A/\mathcal{C}_\A^2$ is zero, it follows from (\ref{JAj}) that 
$$
{\rm dim}_K
\left( \ 
\overline{
\left(
\begin{array}{c|ccccc}
& \A_{1 j}     & \A_{1, j+1}   &\ldots  & \A_{1q}   \\
& \vdots       & \vdots       & \ddots & \vdots   \\
& \A_{j-1,j}   & \A_{j-1,j+1} &\ldots  & \A_{j-1,q}  \\
& J(\A_{jj})   & \A_{j,j+1}   & \ldots & \A_{jq} \\ \hline
&              &              &        &
\end{array}
\right)
} \overline{\mathcal{C}_A}
\right) = {\rm dim}_K (J(\A_{jj}) \A_{j, j+1})
$$
\begin{equation} \label{other dim}
= {\rm dim}_K
\left(
\left(
\begin{array}{cc}
0_{n_{j1}}                 & {\rm M}_{n_{j1} \times n_{j2}}(K) \\
0_{n_{j2} \times n_{j1}} & 0_{n_{j2}}
\end{array}
\right) \cdot {\rm M}_{n_j \times n_{j+1}}(K) 
\right)
= n_{j1}n_{j+1}.
\end{equation}
We have thus found (see (\ref{dim Vj})) that ${\rm dim}_K V_j = n_{j2}n_{j+1}$ , and by (\ref{inclusion}) and (\ref{other dim}), ${\rm dim}_K \overline{\varphi}(V_j) \leq n_{j1}n_{j+1}$. 
However, these dimensions are equal, implying that $n_{j2} \leq n_{j1}$. This is a contradiction, since $n_{j2} = n_{j1}+1$. 

Lastly, let 
$j = q$. Now we need another argument, because in this case, by (\ref{Vj}), $V_j$ is the zero space (and so 
$\dim_K V_j = 0 = \dim_K \overline{\varphi}(V_j)$). Instead, we will compare the dimensions of the spaces~$\mathcal{C}_\A^{q-1} J(\A)$ and $\mathcal{C}_\B^{q-1} J(\B)$. Note that 
$$
\left(
\begin{array}{ccccc}
0_{n_1}          & \A_{12}     & \ldots         & \mathcal{A}_{1q} \\ 
                 & \ddots            & \ddots         & \vdots   \\
                 &                   &  0_{q-1}  & \mathcal{A}_{q-1,q}  \\
                 &                   &                &  J(\mathcal{A}_{qq})            
\end{array}
\right)
\subseteq
J(\A) \subseteq
\left(
\begin{array}{ccccc}
\mathcal{A}_{11} & \ldots            & \A_{1,q-1}    & \mathcal{A}_{1q} \\ 
                 &   \ddots          &  \vdots        & \vdots   \\
                 &                   &  \A_{q-1,q-1}  & \mathcal{A}_{q-1,q}  \\
                 &                   &                &  J(\mathcal{A}_{qq})            
\end{array}
\right)
$$
and
$$
\left(
\begin{array}{ccccc}
0_{n_1}          & \B_{12}     & \ldots         & \mathcal{B}_{1q} \\ 
                 & \ddots            & \ddots         & \vdots   \\
                 &                   &  0_{q-1}       & \mathcal{B}_{q-1,q}  \\
                 &                   &                &  J(\mathcal{B}_{qq})            
\end{array}
\right)
\subseteq
J(\B) \subseteq
\left(
\begin{array}{ccccc}
\mathcal{B}_{11} & \ldots            & \B_{1,q-1}    & \mathcal{B}_{1q} \\ 
                 &   \ddots          &  \vdots        & \vdots   \\
                 &                   &  \B_{q-1,q-1}  & \mathcal{B}_{q-1,q}  \\
                 &                   &                &  J(\mathcal{B}_{qq})            
\end{array}
\right),
$$
where $J(\A_{qq})$ and $J(\B_{qq})$ are as in (\ref{JAj}) and (\ref{JBj}), respectively, with $j = q$.
Then, by Proposition~\ref{proposition power of ideal generated by commutators}, it is not hard to show that ${\rm dim}_K (\mathcal{C}_\A^{q-1} J(\A)) = n_1 n_{q2}$ and ${\rm dim}_K (\mathcal{C}_\B^{q-1} J(\B)) = n_1 n_{q1}$. We conclude, as before, that
$n_{q2} = n_{q1}$. This contradiction completes the proof.
\end{proof}

\begin{theorem} \label{characterisation of typical Dq} 
Let $K$ be an algebraically closed field, and let $\mathcal{A}$ and $\mathcal{B}$ be {\rm D}$_q$ subalgebras of~{\rm M}$_n(K)$ of types $(n_1, n_2, \ldots, n_q)$ and $(\ell_1, \ell_2, \ldots, \ell_q)$, respectively, with max-dim db's. Then $\mathcal{A}$ and $\mathcal{B}$ are isomorphic if and only if the $q$-tuples $(n_1, n_2, \ldots, n_q)$ and $(\ell_1, \ell_2, \ldots, \ell_q)$ are equal and the diagonal blocks $\mathcal{A}_{jj}$ and~$\mathcal{B}_{jj}$ are pairwise conjugated for $j = 1, 2, \ldots, q$. Moreover, for every $j, \ j=  1, 2, \ldots, q,$ there is exactly  one  $k_j$ such that $\mathcal{A}_{jj}$ and $\mathcal{B}_{jj}$
are conjugated with $C^{k_j}_{n_j}(K)$. 
\end{theorem}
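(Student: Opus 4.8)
The plan is to prove the two implications separately, leveraging the conjugation and dimension machinery already assembled. For the sufficiency (``if'') direction, I would first record the elementary observation that a commutative subalgebra of ${\rm M}_m(K)$ of maximum dimension $\lfloor m^2/4\rfloor+1$ is automatically \emph{maximal} commutative, since enlarging it would violate the Schur--Jacobson bound; hence max-dim db's are in particular max-comm db's. Assuming then that the $q$-tuples coincide and that each $\A_{jj}$ is conjugate to $\B_{jj}$, Theorem \ref{max up to conj} applies verbatim and yields that $\A$ and $\B$ are conjugate, and therefore isomorphic.

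For the necessity (``only if'') direction, suppose $\A\cong\B$. Theorem \ref{theorem typical dk algebras} immediately gives $(n_1,\dots,n_q)=(\ell_1,\dots,\ell_q)$ together with algebra isomorphisms $\A_{ii}\cong\B_{ii}$ for each $i$, so the task reduces to upgrading each such \emph{isomorphism} of diagonal blocks to a \emph{conjugation}. Since $K$ is algebraically closed and $\A_{ii},\B_{ii}$ are commutative subalgebras of ${\rm M}_{n_i}(K)$ of maximum dimension, Remark \ref{typical commutative} (with Corollary \ref{isomnotconj}) tells us that each is conjugate to exactly one canonical algebra $C^k_{n_i}(K)$, and that the only isomorphic but non-conjugate pair among these is $C^1_{n_i}(K),C^2_{n_i}(K)$ with $n_i$ odd and $n_i\ge3$. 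Consequently, for each $i$, either $\A_{ii}$ and $\B_{ii}$ are already conjugate, or $n_i$ is odd, $n_i\ge3$, and (after possibly interchanging the roles of $\A$ and $\B$) $\A_{ii}$ is conjugate to $C^1_{n_i}(K)$ while $\B_{ii}$ is conjugate to $C^2_{n_i}(K)$.

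To rule out the second alternative I would argue by contradiction: assume some index $j$ falls into it. Conjugating $\A$ by the block-diagonal matrix that is the identity outside block $j$ and brings $\A_{jj}$ into the form $C^1_{n_j}(K)$, and doing the analogous normalization for $\B$, I invoke Proposition \ref{conjugation block triangular} to guarantee that the two resulting algebras are still D$_q$ subalgebras of ${\rm M}_n(K)$ of type $(n_1,\dots,n_q)$ with max-dim db's (conjugation preserves commutativity, dimension and type), now having $j$-th blocks literally equal to $C^1_{n_j}(K)$ and $C^2_{n_j}(K)$. Lemma \ref{nonisomorphic dq}, which constrains only the single block $j$ and leaves the remaining diagonal blocks arbitrary, then asserts that these two algebras are \emph{not} isomorphic; but they are conjugates of the isomorphic algebras $\A$ and $\B$, a contradiction. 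Hence every pair $\A_{ii},\B_{ii}$ is conjugate, and the ``moreover'' clause follows at once, since by Remark \ref{typical commutative} the common conjugacy class of $\A_{jj}$ and $\B_{jj}$ is represented by a unique $C^{k_j}_{n_j}(K)$.

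I expect the main obstacle to be precisely this necessity direction, namely the passage from ``isomorphic diagonal blocks'' to ``conjugate diagonal blocks.'' The delicate point is that an abstract algebra isomorphism $\A\to\B$ need not respect the block decomposition, so conjugacy of the blocks cannot be read off directly; the argument must route through the isomorphism-invariant dimensions exploited in Lemma \ref{nonisomorphic dq} (which are built on the description of the powers $\mathcal{C}_\A^i$ in Proposition \ref{proposition power of ideal generated by commutators}). One must also verify that Lemma \ref{nonisomorphic dq} genuinely applies when several blocks differ simultaneously, and it does, because the distinguishing invariant depends only on the single block $j$ together with the shared sizes $n_{j1},n_{j2}$ and $n_{j\pm1}$.
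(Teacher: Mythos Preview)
Your proposal is correct and follows essentially the same route as the paper's proof: for sufficiency you invoke Theorem~\ref{max up to conj} (after noting that max-dim db's are in particular max-comm db's), and for necessity you combine Theorem~\ref{theorem typical dk algebras}, the Jacobson classification in Remark~\ref{typical commutative}, a block-diagonal conjugation normalizing the offending block(s) via Proposition~\ref{conjugation block triangular}, and finally Lemma~\ref{nonisomorphic dq} to reach a contradiction. The only cosmetic difference is that the paper normalizes \emph{all} diagonal blocks to canonical form $C^{s_j}_{n_j}(K)$ at once before invoking Theorem~\ref{theorem typical dk algebras}, whereas you apply Theorem~\ref{theorem typical dk algebras} first and then normalize only the single block $j$ needed for Lemma~\ref{nonisomorphic dq}; both orderings work for the same reason.
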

\begin{proof}
Firstly, assume that 
the $q$-tuples $(n_1, n_2, \ldots, n_q)$ and $(\ell_1, \ell_2, \ldots, \ell_q)$ are equal and that the diagonal blocks $\mathcal{A}_{jj}$ and~$\mathcal{B}_{jj}$ are pairwise conjugated for $j = 1, 2, \ldots, q$.
It follows from Theorem~\ref{max up to conj}  that $\A$ and $\B$ are conjugated, and hence isomorphic. 


Conversely, assume that $\A$ and $\B$ are isomorphic.
By Remark \ref{typical commutative}, each diagonal block $\A_{jj}$ (respectively, $\B_{jj}$) is conjugated by a matrix
$X_{jj} \in {\rm M}_{n_j}(K)$ (respectively, $Y_{jj} \in {\rm M}_{\ell_j}(K)$) with some subalgebra  $C_{n_j}^{s_j}(K)$ (respectively, $C_{\ell_j}^{t_j}(K)$). 
We stress that, for the algebra $C_{n_j}^{s_j}(K)$, the number $n_j$ is exactly the number appearing in the sequence $(n_1, n _2, \ldots, n_q)$ which is the type of $\A$; similarly for the algebra $C_{\ell_j}^{t_j}(K)$.
So, 
$X_{jj}^{-1} \A_{jj} X_{jj} = C_{n_j}^{s_j}(K)$ and $Y_{jj}^{-1} \B_{jj} Y_{jj} = C_{\ell_j}^{t_j}(K)$. Define the following block diagonal matrices:
$$X = 
\left(
\begin{array}{cccc}
X_{11} &        &        & \\
       & X_{22} &        & \\
       &        & \ddots & \\
       &        &        & X_{qq}
\end{array}
\right),
\quad
Y = 
\left(
\begin{array}{cccc}
Y_{11} &        &        & \\
       & Y_{22} &        & \\
       &        & \ddots & \\
       &        &        & Y_{qq}
\end{array}
\right).
$$
It follows from Proposition \ref{conjugation block triangular} that $\A ' = X^{-1}\A X$ and $\B ' = Y^{-1}\B Y$ are {\rm D}$_q$ subalgebras of~{\rm M}$_n(K)$ of types~$(n_1, n_2, \ldots, n_q)$ and $(\ell_1, \ell_2, \ldots, \ell_q)$, respectively, with max-dim db's.  
It is straightforward to check that the diagonal blocks of the algebras $\A'$ and $\B'$ are equal to $C_{n_j}^{s_j}(K)$ and $C_{\ell_j}^{t_j}(K)$, respectively.
Since $\A$ and $\B$ are isomorphic, so are $\A '$ and $\B '$. Therefore, by Theorem \ref{theorem typical dk algebras}, the $q$-tuples $(n_1, n_2, \ldots, n_q)$ and $(\ell_1, \ell_2, \ldots, \ell_q)$ are equal and the diagonal blocks $C_{n_j}^{s_j}(K)$ and $C_{\ell_j}^{t_j}(K)$ are isomorphic for $j=1,2,\ldots,q.$ By Remark~\ref{typical commutative} and the arguments preceding it, an isomorphism is possible only for equal blocks, or for pairs $C_{n_j}^{1}(K)$ and $C_{\ell_j}^2(K)$, or for pairs $C_{n_j}^{2}(K)$ and $C_{\ell_j}^1(K)$, with odd integer $n_j = \ell_j \geq 3$. However, in the case of at least one pair of distinct blocks, Lemma~\ref{nonisomorphic dq} can be applied, and so $\A '$ and $\B '$ are not isomorphic; a contradiction. Consequently, $C_{n_j}^{s_j}(K)=C_{\ell_j}^{t_j}(K)$ for $j = 1, 2, \ldots, q$, and so
$$
X_{jj}^{-1}\A_{jj}X_{jj} = Y_{jj}^{-1}\B_{jj} Y_{jj},
$$
from which we get $(X_{jj} Y_{jj}^{-1})^{-1} \A_{jj} X_{jj} Y_{jj}^{-1} = \B_{jj}$. Hence  the diagonal blocks $\A_{jj}$ and~$\B_{jj}$ are conjugated for $j = 1, 2, \ldots, q$, which completes the proof.
\end{proof}

Note that from Theorem \ref{characterisation of typical Dq} and Theorem \ref{max up to conj} it follows that D$_q$ subalgebras of M$_n(K)$ of some types with max-dim db's over an algebraically closed field K are isomorphic if and only if these subalgebras are conjugated.

\begin{remark} \label{sequence determining typical}
    For algebraically closed fields, we have another tool which can help to even better  describe  a {\rm D}$_q$ subalgebra~$\A$ of~{\rm M}$_n(K)$ of type $(n_1, n_2, \ldots, n_q)$ with max-dim db's,
    namely we can say that $\A$ is of type $\big((n_1, k_1), (n_2, k_2), \ldots, (n_q, k_q)\big)$ where 
    for every $j, \ j = 1, 2, \ldots, q$, the 
    number~$k_j$ appears as the superscript in $C_{n_j}^{k_j}(K)$.  It should be clear that  $k_j$ depends on $n_j$, as follows:
    \[
    k_j=\begin{cases}
     1,  \qquad\qquad\qquad\qquad \  {\rm if \ } n_j=1; \\
     1 {\rm \ or \ } 2, \quad\quad\qquad\qquad \ {\rm if \ } n_j=2; \\
      1 {\rm \ or \ } 2 {\rm \ or \ } 3 {\rm \ or \ } 4 {\rm \ or \ } 5, \, {\rm if \ } n_j=3; \\
     1, \qquad\qquad\qquad\qquad \ {\rm if \ } n_j\geq4 {\rm \ and \ } n_j {\rm \ is \ even}; \\
     1 {\rm \ or \ } 2, \quad\quad\qquad\qquad \ {\rm if \ } n_j\geq5 {\rm \ and \ } n_j {\rm \ is \ odd}. 
    \end{cases}
    \]
    Moreover, the $q$-tuples of ordered pairs $\big((n_1, k_1), (n_2, k_2), \ldots, (n_q, k_q)\big)$ determine all {\rm D}$_q$ subalgebras of~{\rm M}$_n(K)$ of type $(n_1, n_2, \ldots, n_q)$ with max-dim db's  
    up to conjugation (and isomorphism). 
     
\end{remark}

We have already shown that every D$_q$ subalgebra of U$_n(K)$ with maximum dimension is a max-dim D$_q$ subalgebra of M$_n(K)$ of  some type $(n_1, n_2, \ldots, n_q)$ with max-dim db's (see 
Corollary~\ref{upper triangular max}). With the help of Remark \ref{sequence determining typical} we can almost precisely say what the diagonal blocks of this subalgebra look like when the field $K$ is algebraically closed.  

\begin{corollary} \label{max dim alg closed}
Let $K$ be an algebraically closed field, and let $\A$ be a D$_q$ subalgebra of U$_n(K)$ with maximum dimension. 
Then $\A$ is a max-dim {\rm D}$_q$ subalgebra of~{\rm M}$_n(K)$ of some type $(n_1, n_2, \ldots, n_q)$ with max-dim db's, such that in form (\ref{block triang})
each diagonal block $\A_{ii}$, $i =1, 2, \ldots, q$, satisfies one of the following conditions:
\begin{enumerate}
    \item $\A_{ii}$ is equal to $C^1_{n_i}(K)$, with integer $n_i$ greater than or equal to 1,
    \item $\A_{ii}$ is equal to $C^2_{n_i}(K)$, with odd integer $n_i$ greater than or equal to 3,
    \item $\A_{ii}$ is conjugated with $C_{2}^2(K)$, $C_{3}^3(K)$, $C_3^4(K)$ or $C_3^5(K)$.
\end{enumerate}
\end{corollary}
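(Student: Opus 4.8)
The plan is to assemble the statement from the block-form theory for max-dim db's already in hand, together with the rigidity Proposition~\ref{conjugation of Cn} for commutative blocks sitting inside upper triangular matrices. First I would invoke Corollary~\ref{upper triangular max}(2): since $\A$ is a D$_q$ subalgebra of U$_n(K)$ with maximum dimension, it is in particular a D$_q$ subalgebra of M$_n(K)$ with maximum dimension (as observed in the paragraph preceding that corollary, using the examples of maximal commutative subalgebras sitting in the upper triangular part), and hence $\A$ itself --- with no conjugation needed --- is a max-dim D$_q$ subalgebra of M$_n(K)$ of some type $(n_1, n_2, \ldots, n_q)$ with max-dim db's. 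Writing $\A$ in the block triangular form (\ref{block triang}), each diagonal block $\A_{ii}$ is then a commutative subalgebra of M$_{n_i}(K)$ of maximum dimension. Because $\A \subseteq {\rm U}_n(K)$, every matrix of $\A$ is upper triangular, so each $\A_{ii}$ is in fact a subalgebra of U$_{n_i}(K)$; this observation is exactly what makes the rigidity result applicable.

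Next I would classify each block. By Remark~\ref{typical commutative}, the commutative maximum-dimensional subalgebra $\A_{ii}$ is conjugated in M$_{n_i}(K)$ to precisely one of the canonical algebras $C^{k}_{n_i}(K)$ listed in $(\ref{commJacob0})$--$(\ref{lastone})$; say $\A_{ii} = Y_{ii}^{-1} C^{k}_{n_i}(K) Y_{ii}$ for some invertible $Y_{ii} \in {\rm M}_{n_i}(K)$. The crucial dichotomy is whether $C^{k}_{n_i}(K)$ is one of the ``single rectangular block'' forms $(\ref{commJacob0})$--$(\ref{commJacob5})$ --- that is, $C^1_{n_i}(K)$ for arbitrary $n_i \ge 1$, or $C^2_{n_i}(K)$ for odd $n_i \ge 3$ --- or instead one of the remaining forms $C^2_2(K), C^3_3(K), C^4_3(K), C^5_3(K)$ (which, by the classification, can only arise when $n_i \in \{2,3\}$).

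In the first case I would apply Proposition~\ref{conjugation of Cn}, taking the algebra there to be $C^{k}_{n_i}(K)$ and the conjugating matrix to be $X = Y_{ii}$: since $Y_{ii}^{-1} C^{k}_{n_i}(K) Y_{ii} = \A_{ii} \subseteq {\rm U}_{n_i}(K)$, the proposition forces $Y_{ii}^{-1} C^{k}_{n_i}(K) Y_{ii} = C^{k}_{n_i}(K)$, i.e. $\A_{ii} = C^{k}_{n_i}(K)$ on the nose. This yields condition (1) when $k=1$ and condition (2) when $k=2$ (the latter occurring only for odd $n_i \ge 3$, which is exactly the range in which $C^2_{n_i}(K)$ appears). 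In the remaining case Proposition~\ref{conjugation of Cn} does not apply --- and indeed the examples following it exhibit genuinely nontrivial conjugates of these blocks inside the upper triangular algebra --- so one simply records that $\A_{ii}$ is conjugated with one of $C^2_2(K), C^3_3(K), C^4_3(K), C^5_3(K)$, which is condition (3). Since for $n_i \ge 4$ only the single-block forms occur, the three cases are exhaustive.

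The argument is essentially a bookkeeping assembly of earlier results, so I do not anticipate a serious obstacle. The one point requiring genuine care is the upgrade from ``$\A_{ii}$ conjugate to $C^{k}_{n_i}(K)$'' to ``$\A_{ii}$ equal to $C^{k}_{n_i}(K)$'' in conditions (1) and (2): this rests entirely on feeding the containment $\A_{ii} \subseteq {\rm U}_{n_i}(K)$ into Proposition~\ref{conjugation of Cn}, and on correctly identifying which canonical forms lie in the range $(\ref{commJacob0})$--$(\ref{commJacob5})$ to which that proposition applies.
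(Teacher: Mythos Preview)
Your proof is correct and follows essentially the same approach as the paper: establish that $\A$ is already in block-triangular form with max-dim commutative diagonal blocks, observe that $\A\subseteq{\rm U}_n(K)$ forces each $\A_{ii}\subseteq{\rm U}_{n_i}(K)$, classify each block up to conjugation via Remark~\ref{typical commutative}, and then invoke Proposition~\ref{conjugation of Cn} on the single-rectangular-block forms to upgrade conjugacy to equality. Your entry point via Corollary~\ref{upper triangular max}(2) is in fact slightly more direct than the paper's route through Corollary~\ref{cormaintheorem}, Remark~\ref{sequence determining typical}, and Proposition~\ref{conjugation block triangular}; one small correction is that the observation ``max dimension in ${\rm U}_n(K)$ equals max dimension in ${\rm M}_n(K)$'' appears in the paragraph \emph{following} Corollary~\ref{upper triangular max}, not preceding it.
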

\begin{proof}
By Corollary \ref{cormaintheorem} and Remark \ref{sequence determining typical}, there exists an invertible matrix $Y \in {\rm M}_n(K)$ such that $Y^{-1}\A Y$ is a max-dim {\rm D}$_q$ subalgebra of~{\rm M}$_n(K)$ of type $(n_1, n_2, \ldots, n_q)$ with the $C_{n_i}^{t_i}(K)$'s as max-dim db's. 
By hypothesis, $\A$ is contained in U$_n(K)$, and we have that $\A= Y(Y^{-1} \A Y) Y^{-1}$. 
By~Proposition~\ref{conjugation block triangular}, $ \A $ is a max-dim D$_q$ subalgebra of M$_n(K)$ of type $(n_1, n_2, \ldots, n_q)$ with max-dim db's conjugated with the $C_{n_i}^{t_i}(K)$'s. 
A conjugation of $C_{n_i}^{t_i}(K)$ is contained in U$_{n_i}(K)$, because $\A  \subseteq {\rm U}_n(K)$, and so the conditions 
in the statement of the corollary follow from Proposition \ref{conjugation of Cn}.
\end{proof}

\section{max-dim D$_q$ subalgebras of M$_n(K)$ with max-dim db's}\label{sectionmaximal}

 In this section, we will describe the $q$-tuples $(n_1, n_2, \ldots, n_q)$ such that $\A$ is a max-dim {\rm D}$_q$ subalgebra of~{\rm M}$_n(K)$ of type $(n_1, n_2, \ldots, n_q)$ with max-dim db's, and we will provide examples illustrating our study.
 

All the results in this section are based on the description in \cite[\,pages 251-253]{LM1}, including \cite[Lemma 12 and Lemma 13]{LM1}, where it was shown that if $n_1$, $n_2$, \ldots, $n_q$ are positive integers such that $n_1+n_2+ \cdots+n_q = n$, then a D$_q$ subalgebra of M$_n(K)$ of type $(n_1, n_2, \ldots, n_q)$ with max-dim db's has maximum dimension if and only if, for all $i$ and $j$,

\begin{equation}\label{at most one or two}
|n_i-n_j| =\begin{cases} 0 {\rm \ or \ }2, {\rm \ if \ both \ } n_i {\rm \ and \ } n_j {\rm \ are \ even};\\
0 {\rm \ or \ }1, {\rm \ otherwise}.\end{cases}
\end{equation}
We recall and  reformulate slightly the mentioned results in \cite{LM1} by starting in Proposition \ref{proposition typical D2} with a  description of max-dim D$_2$ subalgebras of M$_n(K)$ with max-dim db's, which follows directly 
from~(\ref{at most one or two}). 


\medskip


 
\begin{proposition} \label{proposition typical D2}
Let $n_1$ and $n_2$ be positive integers such that $n_1+n_2=n$, and consider 
a D$_2$~subalgebra of M$_n(K)$ of type $(n_1, n_2)$ with max-dim db's.
Then 
$\A$ is a D$_2$ subalgebra of M$_n(K)$ of maximum dimension if and only if one of the following possibilities occurs:
\begin{itemize}
    \item[a)]  $n$ is odd, and $(n_1,n_2) = (\lfloor \frac{n}{2} \rfloor,\lfloor \frac{n}{2} \rfloor +1)$ or $(\lfloor \frac{n}{2}\rfloor+1, \lfloor \frac{n}{2} \rfloor)$.
    
        \item[b)] $4 | n$ and $(n_1,n_2) = (\frac{n}{2},\frac{n}{2})$.
        
     \item[c)] $n=2$ and $(n_1,n_2) = (1,1)$.    
    
    \item[d)] $n\ge6$, $n \equiv 2 \ (\negthinspace\negthinspace\negthinspace\negthinspace\mod 4)$ and $(n_1,n_2) = (\frac{n}{2},\frac{n}{2})$ or $(\frac{n}{2}-1, \frac{n}{2}+1)$ or $(\frac{n}{2}+1,\frac{n}{2}-1)$.
\end{itemize}
\end{proposition}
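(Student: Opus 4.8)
The plan is to read off the statement directly from the displayed criterion (\ref{at most one or two}), specialized to $q=2$, and then to translate the resulting condition on $|n_1-n_2|$ into conditions on $n$ by a parity analysis. For $q=2$ there is only the single pair of indices $(i,j)=(1,2)$, so (\ref{at most one or two}) says that a D$_2$ subalgebra of type $(n_1,n_2)$ with max-dim db's has maximum dimension if and only if $|n_1-n_2|\in\{0,2\}$ when $n_1$ and $n_2$ are both even, and $|n_1-n_2|\in\{0,1\}$ otherwise. Everything then reduces to bookkeeping with the parities of $n_1$, $n_2$ and $n=n_1+n_2$, keeping in mind throughout that $n_1,n_2\ge1$.

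First I would treat the case $n$ odd. Then exactly one of $n_1,n_2$ is even, so we are in the ``otherwise'' branch of the criterion; since $n_1$ and $n_2$ have opposite parities we cannot have $|n_1-n_2|=0$, and hence the criterion forces $|n_1-n_2|=1$, i.e.\ $\{n_1,n_2\}=\{\lfloor n/2\rfloor,\lfloor n/2\rfloor+1\}$. This is precisely possibility a). Next I would treat $n$ even, splitting according to whether $n_1,n_2$ are both odd or both even. If both are odd, then $|n_1-n_2|$ is even, so the ``otherwise'' branch forces $|n_1-n_2|=0$, whence $n_1=n_2=n/2$ with $n/2$ odd, that is $n\equiv2\pmod4$; for $n=2$ this gives $(1,1)$ (possibility c)), while for $n\ge6$ it gives the diagonal option $(\frac n2,\frac n2)$ of possibility d). If both are even, the first branch allows $|n_1-n_2|\in\{0,2\}$: the value $0$ yields $n_1=n_2=n/2$ with $n/2$ even, i.e.\ $4\mid n$ (possibility b)), while the value $2$ yields $\{n_1,n_2\}=\{\frac n2-1,\frac n2+1\}$, and for these two entries to be even we need $n/2$ odd, i.e.\ $n\equiv2\pmod4$; positivity of $\frac n2-1$ forces $n\ge6$, so this gives the two off-diagonal options of possibility d).

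Finally I would verify that the four possibilities obtained are mutually exclusive and exhaust all $n$: odd $n$ gives a), and for even $n$ the three subcases $4\mid n$, $n=2$, and ($n\equiv2\pmod4$ with $n\ge6$) are disjoint and give b), c) and d) respectively, with the last subcase collecting both the ``both odd, difference $0$'' and the ``both even, difference $2$'' solutions into the three tuples listed in d). There is no genuine obstacle here: the argument is an elementary but careful parity count. The only point requiring attention is the interplay between the parity of $n/2$ and the congruence class of $n$ modulo $4$, together with the positivity constraint $n_1,n_2\ge1$, which is exactly what singles out $n=2$ (possibility c)) from the larger family $n\equiv2\pmod4$ (possibility d)).
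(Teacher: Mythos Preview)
Your proposal is correct and is precisely the argument the paper has in mind: the paper states that Proposition~\ref{proposition typical D2} ``follows directly from~(\ref{at most one or two})'' without spelling out the parity analysis, and what you have written is exactly that direct verification for $q=2$.
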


Invoking Proposition \ref{proposition typical D2} and the examples of commutative subalgebras of M$_n(K)$  with maximum dimensions given in 
Section \ref{remarks on jacobson} (see $(\ref{commJacob0})-(\ref{lastone})$), we obtain the following examples of D$_2$ subalgebras of ${\rm M}_5(K)$ and ${\rm M}_6(K)$ with maximum dimension.

\begin{example} \label{example typical D2}
The max-dim D$_2$ subalgebras of ${\rm M}_5(K)$ with max-dim db's 
are of types $(2,3)$ or $(3,2)$, for example,
\begin{align*}
K (e_{11}+e_{22}) + K(e_{33}+e_{44}+e_{55})+
\left(
\begin{array}{cc|ccc}
0 & K & K & K & K \\
  & 0 & K & K & K\\ \hline
  &   & 0 & K & K\\
  &   &   & 0 & 0\\
  &   &   &   & 0
\end{array}
\right), \\
K (e_{11}+e_{22}+e_{33}) + K(e_{44}+e_{55})+
\left(
\begin{array}{ccc|cc}
0 & K & K & K & K \\
  & 0 & 0 & K & K\\
  &   & 0 & K & K\\ \hline
  &   &   & 0 & K\\
  &   &   &   & 0
\end{array}
\right).
\end{align*}

 In such a way we can construct 20 different max-dim D$_2$ subalgebras of M$_5(K)$ of types $(2,3)$ or~$(3,2)$ with max-dim db's equal to $C_2^i(K)$ or $C_3^j(K)$, with $i \in \{1,2 \}$ and $j \in \{1,2,3,4,5 \}$. 
 By~Theorem~\ref{theorem typical dk algebras} and Lemma \ref{nonisomorphic dq}, these 20 different max-dim D$_2$ subalgebras of M$_5(K)$ of types $(2,3)$ or~$(3,2)$ with max-dim db's are pairwise non-isomorphic.
 If $K$ is algebraically closed, then by Corollary \ref{cormaintheorem} and~Theorem~\ref{characterisation of typical Dq}, any 
 D$_2$ subalgebra of M$_5(K)$ with maximum dimension is conjugated with exactly one of them.  


Similarly, the max-dim D$_2$ subalgebras of M$_6(K)$ with max-dim db's 
are of types $(2,4)$, $(3,3)$ or~$(4,2)$. There are 29 such pairwise non-isomorphic subalgebras with diagonal blocks equal to~$C_2^{i}(K)$ or $C_3^j(K)$, with $i \in \{1,2 \}$ and $j \in \{1,2,3,4,5 \}$, or $C_4^1(K)$. 
%
\end{example}

The procedure of determining $q$-tuples $(n_1, n_2, \ldots, n_q)$, 
such that 
$n_i \leq n_j$ for all $i <j$ 
and such that a D$_q$ subalgebra of M$_n(K)$ of type $(n_1, n_2, \ldots, n_q)$ with max-dim db's is a D$_q$ subalgebra 
of~M$_n(K)$ with maximum dimension was discussed in Remark 15 of \cite{LM1}. It starts with determining numbers satisfying $|n_i - n_j| \leq 1$ (for $s=0$ and $t=0$ in the theorem below). Next, in some situations we can add 1 to some of the numbers in the $q$-tuple $(n_1, n_2, \ldots, n_q)$ and simultaneously subtract 1 from some of the others numbers in such a way that condition (\ref{at most one or two}) is satisfied. 

The introductory paragraphs in Section \ref{Section 1} show that a max-dim D$_q$ subalgebra of M$_n(K)$ of some type $(n_1, n_2, \ldots, n_q)$ with max-dim db's has dimension equal to the expression in (\ref{Domokoseq4}). It follows that a permutation which changes the (ordered) $q-$tuple $(n_1, n_2, \ldots, n_q)$ give rise to a max-dim D$_q$ subalgebra of M$_n(K)$ of another type with max-dim db's. 
Combining this observation with  \cite[Remark 15]{LM1} we get the following:

\begin{theorem} \label{typical and tuples}
Let $\A$ be a 
{\rm D}$_q$ subalgebra of~{\rm M}$_n(K)$ of type $(n_1, n_2, \ldots, n_q)$ with max-dim db's.
Write $n = q \left \lfloor \frac{n}{q} \right \rfloor +r$, where $r$ is the non-negative integer less than $q$ in the Division Algorithm. Then $\A$ is a D$_q$ subalgebra of M$_n(K)$ with maximum dimension if and only if there exists  a permutation $\sigma\in S_q$ such that $n_{\sigma(i)} \leq n_{\sigma(j)}$ for all $i < j$ and  one of two possibilities occurs:
\begin{itemize}
    \item[a)] $\left \lfloor \frac{n}{q} \right \rfloor$ is even and there exists a non-negative 
%
     integer $s$, with $s\leq\left \lfloor \frac{r}{2} \right \rfloor$, such that 
    \[n_{\sigma(i)}=\begin{cases}
    \left \lfloor \frac{n}{q} \right \rfloor, {\rm \ if \ } 1\le i \le q-r+s; \\ \\
    \left \lfloor \frac{n}{q} \right \rfloor+ 1, {\rm \ if \ } q-r+s< i \le q-s; \\ \\
    \left \lfloor \frac{n}{q} \right \rfloor+ 2, {\rm \ if \ }  q-s< i \le q.
    \end{cases}
    \]
    \item[b)] $\left \lfloor \frac{n}{q} \right \rfloor$ is odd and there exists a non-negative 
    integer $t$, with $t\leq\left \lfloor \frac{q-r}{2} \right \rfloor$, such that
    \[n_{\sigma(i)}=\begin{cases}
    \left \lfloor \frac{n}{q} \right \rfloor- 1, {\rm \ if \ } 1\le i\le t; \\ \\
    \left \lfloor \frac{n}{q} \right \rfloor, {\rm \ if \ } t< i\le q-r-t; \\ \\
    \left \lfloor \frac{n}{q} \right \rfloor+ 1, {\rm \ if \ }  q-r-t<i\le q.
    \end{cases}
    \]
%
\end{itemize}
\end{theorem}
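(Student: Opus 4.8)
The plan is to reduce the theorem to a purely combinatorial statement about the multiset $\{n_1, \ldots, n_q\}$, using two facts already available. First, by (\ref{Domokoseq4}) the dimension of a {\rm D}$_q$ subalgebra of {\rm M}$_n(K)$ of type $(n_1,\ldots,n_q)$ with max-dim db's is a symmetric function of the $n_i$, so the property of having maximum dimension depends only on the multiset and is invariant under permutations; this legitimises restricting to sorted tuples and producing the permutation $\sigma$ at the end. Second, the criterion (\ref{at most one or two}) from \cite{LM1} says that $\A$ has maximum dimension exactly when $|n_i - n_j| \in \{0,2\}$ for all pairs with $n_i, n_j$ both even, and $|n_i - n_j| \in \{0,1\}$ otherwise. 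Thus it suffices to show that a multiset of positive integers summing to $n$ satisfies (\ref{at most one or two}) if and only if, after sorting, it has the shape prescribed in (a) or (b).

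The first key step is to rephrase (\ref{at most one or two}) as a single window condition: the $n_i$ satisfy (\ref{at most one or two}) if and only if there is an integer $k$ with $n_i \in \{2k, 2k+1, 2k+2\}$ for all $i$. For the forward direction I would note that two odd values must coincide (their difference is even, hence $0$), that an even and an odd value differ by exactly $1$, and that two even values differ by $0$ or $2$; consequently $\max_i n_i - \min_i n_i \le 2$, and whenever this difference equals $2$ the two extreme values are forced to be even, placing the whole multiset in an even-bracketed window $\{2k, 2k+1, 2k+2\}$. The reverse direction is an immediate check, since in such a window the two endpoints are even and differ by $2$ while every other pair differs by at most $1$.

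The second key step pins down the correct window using the Division Algorithm data. Writing $d = \lfloor n/q \rfloor$ and $r = n - qd$, I would observe that the average $n/q$ lies in $[\min_i n_i, \max_i n_i]$, so $d$ itself lies in the window; choosing $k = \lfloor d/2 \rfloor$ then forces all $n_i \in \{2k, 2k+1, 2k+2\}$ with $d \in \{2k, 2k+1\}$, that is, $d$ is either the even left endpoint or the odd middle of this window. The only subtlety is that $d$ cannot be the even \emph{right} endpoint of a genuine width-two window, as that would force all the $n_i$ to be equal. If $d = 2k$ is even, I set $s = \#\{i : n_i = d+2\}$; then $\sum_i n_i = n$ gives $\#\{i : n_i = d+1\} = r - 2s \ge 0$ and $\#\{i : n_i = d\} = q - r + s$, which is exactly (a), with the nonnegativity forcing $s \le \lfloor r/2 \rfloor$. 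If $d = 2k+1$ is odd, I set $t = \#\{i : n_i = d-1\}$, and the same sum computation yields $\#\{i : n_i = d+1\} = r + t$ and $\#\{i : n_i = d\} = q - r - 2t \ge 0$, which is (b), with $t \le \lfloor (q-r)/2 \rfloor$. Sorting the multiset supplies the required $\sigma$.

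Finally, for the converse I would start from a tuple of the form (a) or (b) and check directly that $\sum_i n_i = n$, that the prescribed counts are nonnegative (precisely where the bounds $s \le \lfloor r/2\rfloor$ and $t \le \lfloor (q-r)/2\rfloor$ are used), and that the values occupy an even-bracketed window, so that (\ref{at most one or two}) holds and $\A$ attains the maximum dimension by the cited result. The main obstacle is the pinning argument of the third step: the window representation of a multiset is not unique, and one must verify that the window canonically determined by $d = \lfloor n/q\rfloor$ is compatible with the values. This is where the interplay between the evenness of the window endpoints and the location of the average becomes essential, and it is exactly what separates case (a) ($d$ even) from case (b) ($d$ odd); the remaining steps are elementary bookkeeping with the counts.
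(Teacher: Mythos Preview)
Your proposal is correct and follows essentially the same route as the paper: both reduce to the criterion (\ref{at most one or two}) from \cite{LM1} together with the permutation-invariance of (\ref{Domokoseq4}), and then translate that criterion into the explicit forms (a) and (b). The paper does not actually write out this last translation, instead deferring it to \cite[Remark~15]{LM1}; you have supplied a clean self-contained version via the ``even-bracketed window'' reformulation $n_i\in\{2k,2k+1,2k+2\}$ and the averaging argument that locates $d=\lfloor n/q\rfloor$ inside that window. Your handling of the one genuine subtlety---that $d$ cannot be the right endpoint $2k+2$ of a window that is actually used in full width, because $\max_i n_i=d$ together with $n/q\ge d$ would force all $n_i=d$---is exactly the point that makes the case split into (a) versus (b) work, and your count identities $\#\{n_i=d\}=q-r+s$, $\#\{n_i=d+1\}=r-2s$, etc., are correct.
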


We conclude with two examples illustration the two parts of Theorem \ref{typical and tuples}. 

\begin{example} \label{example typical Dq}
We will find all 5-tuples $(n_1, n_2, n_3, n_4, n_5)$ with $n_1 \leq n_2 \leq n_3 \leq n_4 \leq n_5$ resulting in a max-dim D$_5$ subalgebra of M$_{14}(K)$ of type $(n_1, n_2, n_3, n_4, n_5)$ with max-dim db's.

Using the notation in Theorem \ref{typical and tuples}, we have $n=14$, $q = 5$, $\left \lfloor \frac{n}{q} \right \rfloor =2 $ and $r = 4$. Then, for $s = 0, 1, 2$, the 5-tuples $(n_1, n_2, n_3, n_4, n_5)$ are respectively equal to
$$
(2, 3, 3, 3, 3), \quad (2, 2, 3, 3, 4) \quad {\rm and } \quad (2, 2, 2, 4, 4).
$$
If we do not assume that the $n_i$'s appear in increasing order, then we can permute them, which leads to~$4$, $\frac{5!}{2! \cdot 2!}-1 = 29$ and $\frac{5!}{3! \cdot 2!}-1 = 9$ more possibilities, respectively. 

Similarly, we will find all the 7-tuples $(n_1, n_2, \ldots, n_7)$ with $n_1 \leq n_2 \leq \ldots \leq n_7$ resulting in a max-dim D$_7$ subalgebra of M$_{22}(K)$ of type $(n_1, n_2, \ldots, n_7)$ with max-dim db's. 
Again, using the notation from in Theorem \ref{typical and tuples}, we have $n = 22$, $q = 7$, $\left \lfloor \frac{n}{q} \right \rfloor =3 $, $r = 1$. So, for $t =0, 1, 2, 3$, the 7-tuples $(n_1, n_2, n_3, n_4, n_5, n_6, n_7)$ are respectively equal to 
$$
(3, 3, 3, 3, 3, 3, 4), \; (2, 3, 3, 3, 3, 4, 4), \; (2, 2, 3, 3, 4, 4, 4) \; {\rm and} \; (2, 2, 2, 4, 4, 4, 4).
$$
In this case we have $6$, $\frac{7!}{4! \cdot 2!}-1 = 104$, $\frac{7!}{2! \cdot 2! \cdot 3!}-1 = 209$ and $\frac{7!}{3! \cdot 4!}-1=34$, respectively, more sequences if we don't assume that the $n_i$'s appear in increasing order.
\end{example}

\bigskip

\noindent\textbf{Acknowledgement}

\medskip

The authors would like to express their gratitude to Arkadiusz M\c{e}cel who provided Example~\ref{example block triangular} to them.
	
\medskip

\end{document}